\renewcommand{\newblock}{\\} 
\providecommand{\bibinfo}[2]{#2} 
\newtheorem{theorem}{Theorem}[section]
\newtheorem{lemma}[theorem]{Lemma}
\newtheorem{claim}[theorem]{Claim}
\title{\bf On generalized Tur{\'a}n problems with bounded matching number and circumference\thanks{Research was partially supported by the National
Nature Science Foundation of China (grant numbers 12331012)}}
\date{\today}
\author {Yongchun Lu$^{1}$, \, Liying Kang$^{1,3}$\thanks{\em Corresponding author. Email address: lykang@shu.edu.cn (L. Kang)}, \, Yisai Xue$^{2}$\\
	{\small $^{1}$ Department of Mathematics, Shanghai University,
		Shanghai 200444, P.R. China}\\
	{\small$^{2}$School of Mathematics and Statistics, Ningbo University, Ningbo, China}\\
	{\small$^{3}$Newtouch Center for Mathematics of Shanghai University,
		Shanghai,  China, 200444}}
\date{}
\begin{document}
	
	\maketitle
	
	\begin{abstract}
		Let \( \mathcal{F} \) be a family of graphs. The generalized Tur\'an number \( \operatorname{ex}(n, K_r, \mathcal{F}) \) is the maximum number of $K_r$ in an \( n \)-vertex graph that does not contain any member of \( \mathcal{F} \) as a subgraph.  Recently, Alon and Frankl initiated the study of Tur\'an problems with bounded matching number. In this paper, we determine the generalized Tur\'an number of \( C_{\geq k} \) with bounded matching number.
	\end{abstract}
	\begin{flushleft}
		\hspace{2.5em}\textbf{Keywords:} generalized Tur\'an number, matching number, circumference\\
		\hspace{2.5em}\textbf{AMS (2000) subject classification:} 05C35
	\end{flushleft}

	\section{Introduction}
	The study of extremal graph theory has been a cornerstone in combinatorial mathematics, focusing on the properties of graphs that extremize certain parameters while adhering to given constraints.
	A central theme within this field is the exploration of Tur\'an numbers, which quantify the maximum number of edges in a graph that avoids containing specific substructures as subgraphs.
	This concept was pioneered by Tur\'an's theorem, which determined $\operatorname{ex}(n,K_{k+1})$, the maximum number of edges in a graph with bounded clique number.
	Erd\H{o}s and Gallai \cite{gallai1959maximal} further expanded this domain by determining $\operatorname{ex}(n,M_{s+1})$, the maximum number of edges in a graph with a bounded matching number.
	
	Let $T$ be a fixed graph and $\mathcal{F}$ be a family of graphs. A graph $G$ is called $\mathcal{F}$-{\sl free} if $G$ does not contain any copy of the graphs in $\mathcal{F}$. We denote by $\mathcal{N}(T,G)$ the number of copies of $T$ in $G$. The {\sl generalized Tur\'an number} of $\mathcal{F}$ is defined as follows:
	\begin{align}
		\operatorname{ex}(n,K_r,\mathcal{F})=\max\{\mathcal{N}(K_r,G)|G\text{ is an $n$-vertex }\mathcal{F}\text{-free graph}\}.\nonumber
	\end{align}
	We call the $n$-vertex $\mathcal{F}$-free graph attaining $\operatorname{ex}(n,K_r,\mathcal{F})$ copies of $K_r$ as the {\sl extremal graph} of $\mathcal{F}$. When $T=K_2$, it is the classical Tur\'{a}n number $\operatorname{ex}(n,\mathcal{F})$. The concept of the generalized Tur\'an number was formally introduced by Alon and Shikhelman \cite{alon2016many} in 2016, and Wang \cite{wang2020shifting} further studied the generalized Tur\'an number of matchings.
	
	\begin{theorem}[\cite{wang2020shifting}]\label{wang}
		For any $s \geq 2$ and $n \geq 2 k+1 $, we have \\
		$$\operatorname{ex}\left(n, K_s, M_{k+1}\right)=\max \left\{\binom{2k+1}{s},\binom{k}{s}+(n-k)\binom{k}{s-1}\right\}.$$
	\end{theorem}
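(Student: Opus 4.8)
The plan is to prove the two bounds separately. For the lower bound I would exhibit the two natural extremal graphs whose $K_s$-counts realize the two terms in the maximum. The first is $G_1 = K_{2k+1} \cup \overline{K_{n-2k-1}}$, a clique on $2k+1$ vertices together with $n-2k-1$ isolated vertices; since $2k+1$ is odd it has no matching of size $k+1$, and it contains exactly $\binom{2k+1}{s}$ copies of $K_s$. The second is $G_2 = K_k \vee \overline{K_{n-k}}$, obtained by joining a clique $S$ of size $k$ to an independent set of size $n-k$. Every edge of $G_2$ meets $S$, so its matching number is at most $k$; and since the $n-k$ vertices outside $S$ are pairwise non-adjacent, each copy of $K_s$ uses at most one of them, yielding $\binom{k}{s}$ copies inside $S$ plus $(n-k)\binom{k}{s-1}$ copies using exactly one outside vertex. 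Both graphs are $M_{k+1}$-free, which gives the lower bound $\operatorname{ex}(n,K_s,M_{k+1}) \ge \max\{\binom{2k+1}{s},\binom{k}{s}+(n-k)\binom{k}{s-1}\}$.

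For the upper bound, let $G$ be an $M_{k+1}$-free graph on $n$ vertices with the maximum number of copies of $K_s$, and fix a maximum matching $M$ of size $\nu = \nu(G) \le k$. Set $I = V(G)\setminus V(M)$; maximality of $M$ forces $I$ to be independent, so every edge of $G$ meets $V(M)$ and $N(v)\subseteq V(M)$ for each $v\in I$. Since $I$ is independent, each copy of $K_s$ meets $I$ in at most one vertex, whence
\begin{align}
\mathcal{N}(K_s, G) = \mathcal{N}(K_s, G[V(M)]) + \sum_{v\in I}\mathcal{N}(K_{s-1}, G[N(v)]). \nonumber
\end{align}
A first reduction is to assume $\nu = k$, since otherwise $G$ is $M_k$-free and the claimed bound is increasing in $k$, so that case follows by induction on $k$.

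The key tool is an augmenting-path analysis of the edges between $I$ and $V(M)$. For a matching edge $x_iy_i$, if $x_i$ had a neighbor $u\in I$ and $y_i$ a neighbor $w\in I$ with $u\ne w$, then $u\,x_i\,y_i\,w$ would be an augmenting path, contradicting maximality; hence for each matching edge at most one endpoint sends edges into $I$, unless both endpoints attach only to a single common vertex of $I$. This constraint limits how large the clique-neighborhoods $\mathcal{N}(K_{s-1}, G[N(v)])$ can be simultaneously and couples the two summands above. I would then run an induction on $n$: when $n>2k+1$ the set $I$ has at least two vertices, and deleting a vertex $v\in I$ minimizing $\mathcal{N}(K_{s-1}, G[N(v)])$ gives an $M_{k+1}$-free graph on $n-1$ vertices to which the inductive hypothesis applies, after which one adds back the at most $\mathcal{N}(K_{s-1}, G[N(v)])$ copies of $K_s$ through $v$. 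The base case $n=2k+1$ is immediate, since every graph on $2k+1$ vertices is automatically $M_{k+1}$-free and so contains at most $\binom{2k+1}{s}$ copies of $K_s$, a quantity that dominates the linear term precisely at $n=2k+1$.

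The main obstacle is the transition between the two regimes of the maximum. When the extremal value is governed by the linear term $\binom{k}{s}+(n-k)\binom{k}{s-1}$, the deleted vertex contributes at most $\binom{k}{s-1}$ new copies and the induction closes cleanly; but when it is governed by $\binom{2k+1}{s}$, naively adding back $\binom{k}{s-1}$ copies overshoots, so the deletion must instead target a vertex of $I$ whose neighborhood carries essentially no $(s-1)$-clique, and one must guarantee such a vertex exists in this regime. Resolving this — showing that a graph dense enough to beat the linear term is forced into the clique-like structure of $G_1$ — is the crux, and I expect it to require the augmenting-path constraints above together with a convexity or Kruskal–Katona-type comparison of the clique counts carried by $V(M)$ and by the vertices of $I$.
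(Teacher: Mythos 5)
This statement is not proved in the paper at all: it is Wang's theorem, quoted from reference \cite{wang2020shifting}, where it is established by the shifting (compression) method; so there is no internal proof to compare against, and your attempt must be judged on its own. Your lower bound is complete and correct: $K_{2k+1}\cup I_{n-2k-1}$ and $K_k\vee I_{n-k}$ are $M_{k+1}$-free and realize the two terms. The decomposition $\mathcal{N}(K_s,G)=\mathcal{N}(K_s,G[V(M)])+\sum_{v\in I}\mathcal{N}(K_{s-1},G[N(v)])$ (valid since $I$ is independent) and the augmenting-path constraint on matching edges are also sound, as is the reduction to $\nu(G)=k$.

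The genuine gap is the one you flag yourself and then leave open: the induction on $n$ does not close in the regime where $\binom{2k+1}{s}$ is the maximum, and no argument is supplied there, only the hope that a Kruskal--Katona-type comparison will produce a vertex of $I$ whose neighborhood contains no $(s-1)$-clique. Such a vertex need not exist, so the proposed fix fails as stated. Concretely, take $k=4$, $s=2$, $n=10$: then $\binom{9}{2}=36>30=\binom{4}{2}+6\binom{4}{1}$, so this is the clique regime and $f(n,k)-f(n-1,k)=0$, meaning your step requires a vertex of $I$ contributing zero. But $G=2K_5$ is $M_5$-free, and each of the two uncovered vertices has $N(v)=K_4$, contributing $4>0$; your induction then only yields $\mathcal{N}(K_2,G)\le 36+4=40$, weaker than the claimed $36$. (The theorem is of course true for this $G$, since $e(2K_5)=20$; the point is that the pointwise deletion argument cannot certify it.) The difficulty is structural: when $|I|\le k$, every vertex of $I$ can own a ``private'' matching edge with both endpoints in its neighborhood, exactly the configuration your augmenting-path lemma permits, and then no vertex deletion is free. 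Closing this requires a global argument --- Wang's shifting method transforms $G$ step by step into a shifted graph, checking that each compression preserves $M_{k+1}$-freeness and never decreases the number of $K_s$'s, and then analyzes shifted graphs directly --- or some other mechanism that compares whole graphs rather than single-vertex contributions. As it stands, your proposal is a correct lower bound together with an upper-bound plan whose crux is missing.
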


	In 2022, Alon and Frankl \cite{alon2024turan} determined the exact value of $\operatorname{ex}(n,\{K_{k+1},M_{s+1}\})$.
	
	\begin{theorem}[\cite{alon2024turan}]
		For $n\geq 2s+1$ and $k\geq 2$, $\operatorname{ex}(n,\{K_{k+1},M_{s+1}\})=\max\{e(T_k(2s+1)),e(G(n,k))\}$ where $G(n,k)=T_{k-1}(s)\vee{I_{n-s}}$.
	\end{theorem}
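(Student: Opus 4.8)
The plan is to prove the two bounds separately, the upper bound being the substantive part.

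\textbf{Lower bound.} First I would exhibit the two configurations and verify both constraints. The graph $T_k(2s+1)$ together with $n-(2s+1)$ isolated vertices is $K_{k+1}$-free (it is $k$-partite) and has matching number at most $s$ (only $2s+1$ of its vertices are non-isolated), so it is $\{K_{k+1},M_{s+1}\}$-free with $e(T_k(2s+1))$ edges. For $G(n,k)=T_{k-1}(s)\vee I_{n-s}$, the set $S$ carrying the copy of $T_{k-1}(s)$ is a vertex cover of size $s$, whence $\nu\le s$; and since every clique consists of at most one vertex of the independent set $I_{n-s}$ together with a clique of $T_{k-1}(s)$ (clique number $k-1$), the graph is $K_{k+1}$-free, with $e(T_{k-1}(s))+s(n-s)$ edges. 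Hence $\operatorname{ex}(n,\{K_{k+1},M_{s+1}\})\ge\max\{e(T_k(2s+1)),e(G(n,k))\}$.

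\textbf{Upper bound via Gallai--Edmonds.} Let $G$ be an extremal graph, so $G$ is $K_{k+1}$-free with $\nu:=\nu(G)\le s$. The main tool I would use is the Gallai--Edmonds decomposition $(D,A,C)$: the components $D_1,\dots,D_c$ of $G[D]$ are factor-critical (hence of odd order $2m_i+1$), $G[C]$ has a perfect matching (say $|C|=2\gamma$), $A=N(D)\setminus D$, and the Berge--Tutte identity gives $n-2\nu=c-|A|$ with $\nu=\gamma+|A|+\sum_i m_i$. Two structural facts are decisive for the count: there are no edges between distinct components $D_i,D_j$, and there are no edges between $C$ and $D$ (because $N(D)\setminus D=A$). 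Writing $a=|A|$, the edge count therefore splits as
\[
e(G)=\sum_i e(G[D_i])+e(G[C])+e(G[A])+e(A,D)+e(A,C),
\]
and applying Tur\'an's theorem componentwise gives $\sum_i e(G[D_i])\le\sum_i e(T_k(2m_i+1))$ and $e(G[C])\le e(T_k(2\gamma))$, while the cross terms satisfy $e(A,D)\le a|D|$ and $e(A,C)\le 2a\gamma$.

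\textbf{The coupling and the optimization.} The crucial point is to convert these bounds into the stated maximum by exploiting the interaction of the two forbidden structures. The central observation is that if $A$ is completely joined to a nonempty $D$, then for any $d\in D$ the set $A\cup\{d\}$ forces $A$ to be $K_k$-free, so $e(G[A])\le e(T_{k-1}(a))$; in general one must trade the gain in $e(A,D)$ against this forced drop in the clique number of $A$. Granting this, the problem becomes the discrete optimization of maximizing $\sum_i e(T_k(2m_i+1))+e(T_k(2\gamma))+e(T_{k-1}(a))+a|D|+2a\gamma$ subject to $n=2\gamma+a+\sum_i(2m_i+1)$ and $\nu=\gamma+a+\sum_i m_i\le s$. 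Using superadditivity/convexity of the Tur\'an function $e(T_k(\cdot))$, and respecting the matching budget $\nu\le s$, I would argue that components may be coalesced and that $C$ may be eliminated without decreasing the count, so that the optimum occurs at one of two boundary configurations: either $a=\gamma=0$ with a single odd component of order $2s+1$ and the remaining vertices isolated, giving $e(T_k(2s+1))$; or $\gamma=0$ with $D$ an independent set and $a=s$ fully joined to $D$, giving $e(T_{k-1}(s))+s(n-s)=e(G(n,k))$. Taking the larger of the two finishes the proof.

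The step I expect to be the main obstacle is making this last optimization rigorous: one must show that every genuinely mixed decomposition (a nonempty $A$ coexisting with large odd $D$-components, or a nonempty $C$) is dominated by one of the two pure regimes, which requires careful exchange arguments on the component sizes under the constraint that coalescing components also raises $\nu$. A secondary subtlety is that when $A$ is not completely joined to $D$ one only has $e(A,D)<a|D|$, so the coupling inequality must be applied in its sharp, slack-aware form rather than in the clean full-join special case.
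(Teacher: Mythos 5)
This statement is Theorem 1.2 of the paper, which is quoted from Alon and Frankl \cite{alon2024turan} and is not proved anywhere in the paper, so there is no internal proof to compare you against; your proposal has to stand on its own, and as written it does not. Your lower bound is complete and correct, and your Gallai--Edmonds bookkeeping is accurate (no $D$--$C$ edges, $\nu=\gamma+|A|+\sum_i m_i$, Tur\'an applied to each $D_i$ and to $C$). The genuine gap is that the objective function you then optimize is not a valid upper bound on $e(G)$. The term $e(T_{k-1}(a))$ bounding $e(G[A])$ is justified only in the full-join case, i.e.\ when some $d\in D$ is adjacent to all of $A$; in general $K_{k+1}$-freeness gives only $e(G[A])\le e(T_k(a))$, and the difference $e(T_k(a))-e(T_{k-1}(a))$ is of order $a^2/\bigl(2k(k-1)\bigr)$. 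Recovering this from slack in $e(A,D)\le a|D|$ is not a per-vertex triviality: the true constraint is only that each $d\in D$ has a $K_k$-free neighbourhood in $A$, which does not directly bound $|N(d)\cap A|$; one must argue through the clique structure of $G[A]$ (e.g.\ if $G[A]\supseteq K_k$ then every $d$ misses a vertex of that $K_k$, if $G[A]=T_k(a)$ then every $d$ misses a whole part, and so on), and interpolating between these regimes is essentially the content of the theorem. Calling this the ``sharp, slack-aware form'' of a coupling inequality names the difficulty; it does not resolve it.

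The same applies to your closing optimization: the claim that every mixed configuration (nonempty $A$ together with large odd components, or nonempty $C$) is dominated by the two pure regimes is asserted via ``superadditivity/convexity'' and exchange arguments that are never carried out, and you yourself flag this as the expected obstacle. Note that these exchanges are coupled: merging $D$-components or absorbing $C$ changes $\nu$ as well as the vertex count, and for $n$ close to $2s+1$ both candidate maxima are genuinely competitive, so no ``large $n$'' shortcut is available (indeed the theorem is stated for all $n\ge 2s+1$). In summary, the skeleton (matching structure theory plus Tur\'an on the pieces plus an exchange argument) is a reasonable route to the Alon--Frankl theorem, but your proposal stops exactly where the real work begins, at both the coupling inequality and the discrete optimization, so it is a plan rather than a proof.
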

	
	Following this breakthrough, many  relevant results have been published.
	Given a positive integer $n$ and a graph $F$, Gerbner \cite{gerbner2024turan} considered
	$\operatorname{ex}(n, \{F, M_{s+1}\})$ in general, and determined its value apart from a constant additive term.
	
	\begin{theorem}[\cite{gerbner2024turan}]
		If $\chi(F)>2$ and $n$ is sufficiently large, then $\operatorname{ex}\left(n,\left\{F, M_{s+1}\right\}\right)=$ $\operatorname{ex}(s, \mathcal{F})+s(n-s)$, where $\mathcal{F}$ is the family of graphs obtained by deleting an  independent set from $F$.
	\end{theorem}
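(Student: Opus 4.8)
The plan is to prove matching lower and upper bounds. For the lower bound I would let $H_0$ be an $\mathcal F$-free graph on a vertex set $S$ of size $s$ with $e(H_0)=\operatorname{ex}(s,\mathcal F)$, and form $G_0 = H_0 \vee I_{n-s}$, the join of $H_0$ with an independent set on the remaining $n-s$ vertices. Then $S$ is a vertex cover of $G_0$, so $\nu(G_0)\le s$ and $G_0$ is $M_{s+1}$-free. To see $G_0$ is $F$-free, suppose a copy of $F$ occurred; the vertices of this copy lying in $I_{n-s}$ form an independent set $J$ of $F$, so $F-J\in\mathcal F$ would embed into $G_0[S]=H_0$, contradicting that $H_0$ is $\mathcal F$-free. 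Here $\chi(F)>2$ guarantees that $F-J$ still has an edge, so the copy cannot lie entirely inside $I_{n-s}$. Counting edges gives $e(G_0)=\operatorname{ex}(s,\mathcal F)+s(n-s)$, the claimed lower bound.

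For the upper bound, let $G$ be an extremal $\{F,M_{s+1}\}$-free graph, fix a maximum matching $M$ with $\nu:=|M|\le s$, and let $U$ be the set of unmatched vertices, which is independent and has size $n-2\nu$. The local structure comes from ruling out augmenting paths: for each matched edge $xy$, if $x$ has two neighbours in $U$ then $y$ has none, so at most one endpoint of each matched edge is \emph{heavy} (has $\ge 2$ neighbours in $U$). Let $S$ be the set of heavy endpoints, so $|S|\le\nu\le s$. Two further augmenting arguments show that the light partner of a heavy vertex has no neighbour in $U$, and that the light partners are pairwise non-adjacent; consequently every edge of $G$ not incident to $S$ lies among the at most $2s$ matched vertices outside $S$, so $e(V\setminus S)=O(s^2)$ is a constant. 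Hence $e(G)\le e(S)+|S|(n-|S|)+O(s^2)$.

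I would then split on $|S|$. If $|S|\le s-1$, the bound $e(G)\le (s-1)(n-s+1)+O(s^2)$ is strictly below $\operatorname{ex}(s,\mathcal F)+s(n-s)$ once $n$ is large, since the gap is $n-O(s^2)$. The tight case is $|S|=s$, which forces $\nu=s$ and makes $V\setminus S$ an independent set, so $S$ is a vertex cover and $e(G)=e(S)+e(S,V\setminus S)\le e(S)+s(n-s)$. It remains to show $G[S]$ is $\mathcal F$-free, giving $e(S)\le\operatorname{ex}(s,\mathcal F)$. Here I would use an embedding argument: if $G[S]$ contained some $F-J\in\mathcal F$, I would re-grow the deleted independent set $J$ by choosing $|J|$ distinct vertices of the independent set $V\setminus S$, each adjacent to the appropriate vertices of $S$. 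Writing $\Sigma=s(n-s)-e(S,V\setminus S)$ for the total deficiency of $S$ towards $V\setminus S$, such common neighbours exist whenever $\Sigma\le n-s-|V(F)|$, so either the embedding succeeds (contradicting $F$-freeness) or $\Sigma$ is of order $n$, in which case $e(S,V\setminus S)$ drops by order $n$ and $e(G)$ again falls strictly below the target.

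The main obstacle is exactly this tight case $|S|=s$: the heavy vertices are only guaranteed two neighbours in $U$, not a large common neighbourhood, so the embedding recreating $F$ can fail locally. The device that resolves this is the deficiency bookkeeping above, which trades any failure of the embedding against a linear-in-$n$ loss in $e(S,V\setminus S)$; quantifying this trade-off, and checking that the greedy selection of distinct common neighbours for $J$ goes through with $|V(F)|$ constant and $n$ large, is the crux of the argument. The remaining ingredients, namely the augmenting-path lemmas and the constant bound on $e(V\setminus S)$, are routine.
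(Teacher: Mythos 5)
This statement appears in the paper only as quoted background (it is Gerbner's theorem, cited from reference [9]); the paper contains no proof of it, so there is no internal proof to compare yours against, and I have judged your argument on its own terms. It is correct. The lower bound via $G_0=H_0\vee I_{n-s}$ is right, including the observation that any copy of $F$ must place an independent set $J$ of $F$ inside $I_{n-s}$, forcing a member of $\mathcal{F}$ into $H_0$, and the role of $\chi(F)>2$ (every member of $\mathcal{F}$ retains an edge, so $\mathcal{F}$-freeness of an $s$-vertex graph is not vacuous). The upper bound is also sound: the augmenting-path facts (at most one heavy endpoint per matched edge; the partner of a heavy vertex has no neighbour in $U$; partners of heavy vertices are pairwise non-adjacent, via a length-five augmenting path using two distinct vertices of $U$) do give a set $S$, $|S|\le \nu\le s$, covering all but $O(s^2)$ edges, and the dichotomy in the tight case $|S|=s$ is quantitatively correct: at most $\Sigma$ vertices of $V\setminus S$ fail to be complete to $S$, so either at least $n-s-\Sigma\ge |V(F)|$ vertices are complete to $S$ and the greedy re-embedding of $J$ produces a copy of $F$, or $e(S,V\setminus S)\le s(n-s)-(n-s-|V(F)|)$, which loses more than the $\binom{s}{2}$ slack in $e(S)$ and contradicts extremality. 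Two minor slips, neither damaging: first, your claim that every edge not incident to $S$ ``lies among the matched vertices outside $S$'' is not literally true, since a non-heavy matched vertex may have one neighbour in $U$; these contribute at most $2s$ extra edges, so the $O(s^2)$ bound survives. Second, in the tight case the logic is cleanest if stated as: if $G[S]$ contains a member of $\mathcal{F}$, then both branches of the dichotomy yield contradictions, hence $G[S]$ is $\mathcal{F}$-free and $e(G)\le \operatorname{ex}(s,\mathcal{F})+s(n-s)$ follows; this is essentially what you wrote, and it completes a valid self-contained proof.
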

	
	Ma and Hou \cite{ma2023generalizedturanproblembounded}  determined the exact value of  $\operatorname{ex}(n,K_k, \{K_{k+1}, M_{s+1}\})$ and gave an asymptotic value of $\operatorname{ex}(n,K_k, \{F, M_{s+1}\})$
	for general $F$ with an error term $O(1)$.
	Zhu and Chen \cite{zhu2024extremal} determined $\operatorname{ex}(n, K_r, \{F,M_{s+1}\})$
	when $F$ is color critical with $\chi(F)\geq\max\{r+1,4\}$.
	Gerbner \cite{Gerbner2023}  extended these investigations by replacing $K_r$ with an arbitrary graph $H$.
	
	Recently, Xue and Kang \cite{xue2024generalized}  investigated  the generalized Tur\'an problem of  matchings and paths for any sufficiently large $n$. Apart from matching, Tur{\'a}n problems concerning the circumference are also prominent topics in extremal graph theory. Following the literature, we denote by $C_{\geq k}$ the family of cycles with length at least $k$. The exact value of $\operatorname{ex}\left(n, C_{\geq k}\right)$ was determined by Woodall \cite{woodall1976maximal} and independently by Kopylov \cite{kopylov1977maximal}.
	During the last few years, Chakraborti and Chen \cite{chakraborti2024exact} investigated the generalized Tur\'an number of $C_{\geq k}$.  Very recently, Dou, Ning and Peng \cite{dou2024number} determined the generalized Tur\'an number with bounded  clique number and circumference.
Zhao and Lu \cite{zhao2024generalizedturanproblemsmatching} determined $\operatorname{ex}(n, K_r, \{C_{\geq 2k+1}, M_{s+1}\})$ when $s \geq 2k+1$ and $k \geq r-1$, and  $\operatorname{ex}(n, K_r, \{C_{\geq 2k}, M_{s+1}\})$ when $k \geq r$.
Motivated by these results, we determined the value of  $\operatorname{ex}(n, K_r, \{C_{\geq k},M_{s+1}\})$ for all $s$, $r$ and $k$. One can refer to references  \cite{chvatal1976degrees, katona2024extremal, liu2024extremal, luo2018maximum, wang2023spectral, zhang2023maximum} for more information on related topics.
	
	Set \( p := \left\lfloor\frac{k-1}{2}\right\rfloor + 1 \geq3.\) If $n$ is sufficiently large, one can easily check that
	$$\operatorname{ex}(n, K_r, \left\{C_{\geq k}, M_{s+1}\right\}) \leq \operatorname{ex}\left(n, K_r, M_{s+1}\right) = \mathcal{N}(K_r, (K_s \vee I_{n-s})).$$
	Note that \( K_s \vee I_{n-s} \) is \( C_{\geq k} \)-free if $p>s$. Then
	$$
	\operatorname{ex}\left(n, K_r, \left\{C_{\geq k}, M_{s+1}\right\}\right) = \binom{s}{r} + (n-s)\binom{s}{r-1}.
	$$
	Thus we only need to consider the case \( p \leq s \).

	We give the exact values of the generalized Tur\'an number of $\{C_{\geq k}, M_{s+1}\}$ by considering the parity of $k$. The constructions of
	$G_1,G_2,G_3,G_4,G_5$ and $G_6$ will be described in Section \ref{section 3} and Section \ref{section 4}.
	\begin{theorem}\label{odd}
		Let  \( s \geq p \geq 3 \), and $n$ be a sufficiently large integer. Assume $s-p+1=a(p-2)+b$, where \( 0 \leq b \leq p-3 \).
		\begin{itemize}
			\item If $0 \le b <\left\lceil\frac{p-1}{2}\right\rceil$, then $$\operatorname{ex}\left(n, K_r,\left\{C_{\geq 2p-1}, M_{s+1}\right\}\right)=\mathcal{N}\left(K_r, G_1\right).$$
			\item If $\lceil\frac{p-1}{2} \rceil\leq b \leq p-3$, then $$\operatorname{ex}\left(n, K_r,\{C_{\geq 2p-1}, M_{s+1}\right)=\max\{\mathcal{N}(K_r,G_1), \mathcal{N}\left(K_r, G_2\right)\}.$$
		\end{itemize}
	\end{theorem}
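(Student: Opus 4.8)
The plan is to establish matching lower and upper bounds: the lower bound comes from the explicit graphs, and the upper bound from a decomposition driven by the matching number together with a cycle-routing argument driven by the circumference.

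For the lower bound it suffices to check that the graphs $G_1$ and $G_2$ of Section~\ref{section 3} are $\{C_{\ge 2p-1},M_{s+1}\}$-free and to evaluate their $K_r$-counts. Each is a disjoint union of a dominant block $K_{p-1}\vee I_{m}$, which has circumference exactly $2p-2$ and matching number $p-1$, together with clique blocks that absorb the residual matching budget $s-(p-1)=a(p-2)+b$: writing this as $a$ copies of $K_{2p-3}$ (matching number $p-2$, circumference $2p-3$) plus a block governed by $b$, one sees that the union has circumference at most $2p-2<2p-1$ and matching number exactly $s$. Evaluating $\mathcal{N}(K_r,\cdot)$ blockwise then gives $\operatorname{ex}(n,K_r,\{C_{\ge 2p-1},M_{s+1}\})\ge \max\{\mathcal{N}(K_r,G_1),\mathcal{N}(K_r,G_2)\}$.

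For the upper bound, let $G$ be extremal and fix a maximum matching $M$; its size is $\nu\le s$, the unmatched set $I$ is independent, and $W:=V(G)\setminus I$ has $|W|=2\nu\le 2s$. Since $I$ is independent, every $K_r$ lies in $G[W]$ or uses exactly one vertex of $I$, giving
$$\mathcal{N}(K_r,G)=\mathcal{N}(K_r,G[W])+\sum_{v\in I}\mathcal{N}\!\left(K_{r-1},G[N(v)\cap W]\right).$$
As $|W|$ is bounded, the first term is $O(1)$, so the whole problem concentrates in the sum, whose $|I|\approx n$ terms carry the leading order. The decisive structural fact is a cycle-routing bound: if a set $S\subseteq W$ with $|S|=p$ is contained in the neighbourhood of $p$ distinct vertices $v_1,\dots,v_p\in I$, then for any $s_1,\dots,s_p\in S$ the sequence $s_1v_1s_2v_2\cdots s_pv_ps_1$ is a cycle of length $2p\ge 2p-1$, which is forbidden. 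Hence each $p$-subset of $W$ lies in at most $p-1$ neighbourhoods, so at most $(p-1)\binom{2s}{p}=O(1)$ vertices $v\in I$ have $|N(v)\cap W|\ge p$; every other vertex satisfies $|N(v)\cap W|\le p-1$ and therefore contributes at most $\binom{p-1}{r-1}$. Summing yields $\mathcal{N}(K_r,G)\le n\binom{p-1}{r-1}+O(1)$, which already matches the leading term of $\mathcal{N}(K_r,G_1)$.

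The hard part is to replace this $O(1)$ by the exact additive constant, and this is where the two cases of the theorem are born. After setting aside the bounded set of ``rich'' vertices (those with $|N(v)\cap W|\ge p$) and the bounded graph $G[W]$, one is left with a finite extremal problem: among $C_{\ge 2p-1}$-free configurations using matching budget at most $s$, maximize the total number of $K_r$'s beyond the clean term $n\binom{p-1}{r-1}$. I would show that the optimizer must take the form of one dominant block $K_{p-1}\vee I_{n-O(1)}$ together with an optimal packing of the residual budget $s-(p-1)=a(p-2)+b$ into disjoint cliques, the only freedom being how to spend the remainder $b$; comparing the value $\binom{2b+1}{r}$ of keeping it as a single clique $K_{2b+1}$ against the competing redistribution produces precisely the threshold $b\lessgtr\lceil\frac{p-1}{2}\rceil$ distinguishing $G_1$ from $G_2$. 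The main obstacle is to carry out this comparison with exact (not asymptotic) constants while proving that no alternative attachment pattern of the rich vertices or of $G[W]$ can do better, in particular ruling out configurations that borrow circumference between the dominant block and the residual cliques.
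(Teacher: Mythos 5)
Your lower bound and your leading-order upper bound are correct, and in fact they track the paper's own preliminary step: the paper's stability result (Lemma \ref{stability}) is proved by exactly your cycle-routing/pigeonhole argument, where $p$ unmatched vertices sharing $p$ common neighbours in the matched set create a forbidden $C_{2p}$, so all but $O(1)$ vertices have degree at most $p-1$. But everything after ``The hard part is to replace this $O(1)$ by the exact additive constant'' is a statement of intent, not a proof, and that is where the entire content of the theorem lies. The paper spends all of Section \ref{section 3} on precisely this: one must show that the bounded leftover set $Z$ attaches to a single vertex $v_1$ of the clique $X$ (Claims \ref{find cycle} and \ref{base}), that each component of $G[Z]$ together with $v_1$ is a strict block-cut tree whose blocks are Hamiltonian of order between $p+1$ and $2p-2$ (Claims \ref{block size}--\ref{H}, via the Dirac and Kopylov lemmas), that each such component is in fact a single clique (Claim \ref{clique}, which needs the matching-preserving block-cut-star surgery of Lemmas \ref{near-perfect matching} and \ref{star}), and that at most one residual clique is smaller than $K_{2p-3}$ (Claim \ref{structure}, via Lemma \ref{ch}). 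These reductions require surgeries (edge contractions, neighbourhood replacements, block rearrangements) that must preserve $C_{\geq 2p-1}$-freeness and $M_{s+1}$-freeness simultaneously while not decreasing the $K_r$-count, all controlled by a lexicographic potential $\Phi$; none of this machinery, nor any substitute for it, appears in your proposal, and it is not routine.

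Moreover, your one concrete suggestion for how the case distinction arises is not the right mechanism. You propose to obtain the threshold $b \lessgtr \left\lceil\frac{p-1}{2}\right\rceil$ by comparing $\binom{2b+1}{r}$ against a competing redistribution; such a clique-count comparison would generically yield a threshold depending on $r$, whereas the theorem's threshold is independent of $r$. In the paper the threshold is structural, not enumerative: after the reductions, every vertex of $Z$ has all neighbours in $X \cup Z$ and at most one neighbour in $X$, so every component of $G[Z]$ has order at least $p$; a residual clique $K_{2b+1}$ (which is what the matching budget permits) with $2b+1 < p$, equivalently $b < \left\lceil\frac{p-1}{2}\right\rceil$, is therefore infeasible in an extremal graph, forcing $G \cong G_1$, while for $b \geq \left\lceil\frac{p-1}{2}\right\rceil$ both $G_1$ and $G_2$ survive and one takes the maximum. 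So even the plan you sketch for the final comparison would need to be replaced by a structural argument of the kind the paper gives.
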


	\begin{theorem}\label{even}
		Let  \( s \geq p \geq 3 \), and $n$ be a sufficiently large integer.  Assume $s-p+1=c (p-1)+d$, where $0 \leq d \leq p-2$.
		\begin{itemize}
			\item If $d=0$, then
			$$
			\operatorname{ex}\left(n, K_r,\left\{C_{\geq 2 p}, M_{s+1}\right\}\right)=\max \left\{\mathcal{N}\left(K_r, G_3\right), \mathcal{N}\left(K_r, G_4\right)\right\}.
			$$
			\item If $1 \leq d \leq p-3$, then
			$$
			\operatorname{ex}\left(n, K_r,\left\{C_{\geq 2 p}, M_{s+1}\right\}\right)=\max \left\{\mathcal{N}\left(K_r, G_3\right), \mathcal{N}\left(K_r, G_4\right), \mathcal{N}\left(K_r, G_5\right), \mathcal{N}\left(K_r, G_6\right)\right\}.
			$$
			\item If $d=p-2$, then
			$$
			\operatorname{ex}\left(n, K_r,\left\{C_{\geq 2 p}, M_{s+1}\right\}\right)=\max \left\{\mathcal{N}\left(K_r, G_3\right), \mathcal{N}\left(K_r, G_4\right), \mathcal{N}\left(K_r, G_6\right)\right\}.
			$$
		\end{itemize}
	\end{theorem}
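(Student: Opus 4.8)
The plan is to establish matching lower and upper bounds, with nearly all the work in the upper bound. For the lower bound I would take the explicit graphs $G_3,\dots,G_6$ constructed in Section~\ref{section 4}, verify directly that each has circumference at most $2p-1$ (hence is $C_{\ge 2p}$-free) and matching number at most $s$ (hence is $M_{s+1}$-free), and then count their copies of $K_r$; this yields $\operatorname{ex}(n,K_r,\{C_{\ge 2p},M_{s+1}\})\ge\max\{\dots\}$ and is routine once the constructions are in hand. For the upper bound, let $G$ be a graph attaining the maximum. Fixing a maximum matching, its unmatched vertices form an independent set $I$ with $|I|\ge n-2s$, and $U:=V(G)\setminus I$ satisfies $|U|\le 2s$. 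Since $I$ is independent, each copy of $K_r$ meets $I$ in at most one vertex, so
\[
\mathcal N(K_r,G)=\mathcal N(K_r,G[U])+\sum_{v\in I}\mathcal N(K_{r-1},G[N(v)\cap U]),
\]
where $\mathcal N(K_r,G[U])\le\binom{2s}{r}=O(1)$, and the whole difficulty lies in the main sum.

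The key circumference input is: \emph{no set $W\subseteq U$ with $|W|\ge p$ admits $p$ common neighbours in $I$}, because $p$ such neighbours and $p$ vertices of $W$ form an alternating cycle $C_{2p}$. I would double-count $\sum_{v\in I}\mathcal N(K_{r-1},N(v)\cap U)=\sum_Q c(Q)$ over $(r-1)$-cliques $Q\subseteq U$, where $c(Q)$ is the number of common $I$-neighbours of $Q$; calling $Q$ \emph{heavy} when $c(Q)\ge 2s+1$, the light cliques contribute only $O(1)$ in total. Applying the circumference input (with a pigeonhole over the $O(1)$ subsets of $U$) to the union of the heavy cliques lying in $N(v)\cap U$ shows that all but $O(1)$ vertices $v\in I$ have this union of size at most $p-1$, so each contributes at most $\binom{p-1}{r-1}$ heavy cliques. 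Combining everything gives
\[
\mathcal N(K_r,G)\le\binom{p-1}{r-1}\,n+O(1),
\]
with the linear coefficient attained only when a fixed clique of size $p-1$ is complete to almost all of $I$.

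Consequently every construction shares the same leading term $\binom{p-1}{r-1}n$, and the theorem is genuinely a statement about the $O(1)$ additive constant. The plan here is a structural reduction: after peeling off a dominating clique $D$ (of size at most $p-1$) joined to almost all of $I$, the rest of $U$ decomposes into $C_{\ge 2p}$-free blocks — components, or blocks attached to $D$ — of bounded order whose total matching number is at most $s-|D|$. I would then argue by a local-optimality/Kopylov-type exchange that, to maximise $K_r$'s per unit of matching budget under circumference at most $2p-1$, each block may be taken to be a clique, the most efficient being $K_{2p-1}$ (matching number $p-1$). This reduces everything to a finite integer optimisation: distribute the surplus matching $s-p+1=c(p-1)+d$ among copies of $K_{2p-1}$, one smaller clique absorbing the remainder $d$, and a possible enhancement that enlarges $D$ into a $K_{p+1}$ at the cost of one unit of matching. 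Solving this optimisation, organised by the value of $d$, should reproduce exactly the maxima over $G_3,\dots,G_6$.

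I expect the main obstacle to be this final, exact step. Proving that each block can be replaced by a clique is delicate because the exchange must respect the circumference bound \emph{and} the interaction of the block with $D$ and with $I$, so the naive ``densify each component'' move has to be justified against these global constraints. Once that is in place, the remaining task is to carry out the finite optimisation precisely enough to see which configuration wins in each regime; the split into the cases $d=0$, $1\le d\le p-3$, and $d=p-2$ strongly suggests that the optimal way to spend the remainder $d$ changes — between a clique block and an enhancement of the dominating structure — exactly at these thresholds, and pinning down these boundary comparisons is where the careful counting will be required.
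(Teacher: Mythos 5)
Your lower bound and your leading-term estimate $\mathcal{N}(K_r,G)\le \binom{p-1}{r-1}n+O(1)$ are sound, and in fact your heavy-clique/pigeonhole argument is essentially the paper's stability lemma (Lemma \ref{stability}) in different clothing: both reduce to a near-extremal graph of the form $K_{p-1}\vee I_{n-O(1)}$ plus a bounded vertex set. But, as you say yourself, the theorem is a statement about the $O(1)$ term, and everything you propose for that part is asserted rather than proved --- and one of your guiding claims is concretely false. Your finite optimisation universe (as many through-centre blocks $K_{2p-1}$ as possible, one smaller clique absorbing the remainder $d$, plus a possible $K_{p+1}$ enhancement) can only ever produce $G_4$ and $G_6$; it misses $G_3$ and $G_5$, which the theorem genuinely needs. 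The claim that $K_{2p-1}$ is ``the most efficient'' per unit of matching is true per unit but fails in the integer programme, because the remainder clique can have \emph{negative} marginal value. Concretely, take $p=5$, $s=10$, $r=2$, so $s-p+1=6$: placing $2K_7$ in $Z$ (the $G_3$ pattern, two odd blocks of matching cost $3$ each) gives $e(G_3)=4n-10$, while $K_8\cup K_5$ (the $G_4$ pattern, one even block of cost $4$ plus a remainder $K_5$ of cost $2$) gives $e(G_4)=4n-11$; both graphs have matching number exactly $10$ and circumference $9<2p$. So the greedy-by-efficiency optimisation you describe returns the wrong extremal graph, and the case split over $d$ is not merely a boundary comparison between ``remainder clique'' and ``enhancement'': it must weigh all-odd and mixed odd/even block patterns ($G_3$, $G_5$) against the even-greedy ones, which is exactly what the paper's Claims \ref{case1.1}--\ref{case1.4} do.

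The second gap is the structural reduction itself, which you flag as ``delicate'' but do not supply, and it is where nearly all of the paper's work lies. To show that the bounded part decomposes into cliques of order between $p+1$ and $2p-1$, all attached at a single vertex of $X$, the paper needs a contraction lemma (Lemma \ref{contract}) together with a potential function $\Phi(G)=(e(G),k_3(G),\dots)$ to drive the exchanges, Kopylov and Dirac (Lemmas \ref{Kopylov} and \ref{Dirac}) to force Hamiltonicity of blocks, and, crucially, the block-cut-tree matching lemmas (Lemmas \ref{near-perfect matching} and \ref{star}) to certify that reassembling all blocks into a star at one centre vertex does not increase the matching number. This is precisely where block parity enters (an all-odd star leaves the centre free to match into $Y$, while an even block consumes it), and it is the same phenomenon behind the $G_3$ versus $G_4$ dichotomy above; your accounting ``total matching number at most $s-|D|$'' glosses over it. Finally, $G_6$ is not just an ``enhancement to $K_{p+1}$'': one must prove that at most one such exceptional edge can exist (Claim \ref{exceptional}), that any cut-edge of $G[Z]$ is exceptional and lies in every maximum matching (Claims \ref{cut-ex} and \ref{matching}), and that its presence forces every other block to be a $(2p-2)$-clique. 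None of this follows from your outline, so the proposal as it stands establishes only the first-order asymptotics, not the theorem.
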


	\section{Preliminaries}
	
	Let $G=(V,E)$ be a simple graph with vertex set $V=V(G)$ and edge set $E=E(G)$. Set $e(G)=|E(G)|.$
	For $S\subseteq V(G)$, denote by $G[S]$ the graph induced by $S$, and denote by $G\setminus S$ the graph obtained from $G$ by deleting all vertices of $S$ and all edges incident with $S$.  For $H\subseteq G$, let $G\setminus H$ denote the graph obtained from $G$ by removing all edges in $E(H)$, and subsequently removing all isolated vertices in $H$. For two vertex disjoint graphs $G$ and $H$, we write \(G\cup H\) as the {\sl union} of $G$ and $H$. We write $k$ disjoint copies of $H$ as $kH$. The {\sl join} of $G$ and $H$, denoted by $G \vee H$, is the graph obtained from \(G \cup H\) by adding all possible edges between $G$ and $H$.
	For a subgraph \( H \subseteq G \), denote the neighborhood of \( v \) in \( H \) by \( N_H(v) := V(H) \cap N_G(v) \).
	Moreover, we define $N_G(S)=\{v \text{ }|\text{ }\exists \text{ }u\in S, uv\in E(G)\}$ and $N_H(S) = \{ v \in V(H) \mid \exists u \in S, \ uv \in E(G) \}.$ For subsets \( V_1, V_2 \subseteq V(G) \), \( E_G(V_1, V_2) \) denotes the set of edges between \( V_1 \) and \( V_2 \) in \( G \). When there is no ambiguity, we omit the subscript \( G \). Additionally, let $G[V_1, V_2]$ denote the subgraph induced by the edge set $E_G(V_1,V_2).$
	
	To {\sl identify} nonadjacent vertices $u$ and $v$ of a graph $G$ is to replace $u,v$ by a single vertex $w,$ and each edge $f\in E(G)$ that incident with $u$ or $v$ is replaced by an edge incident with $w.$ To {\sl contract} an edge $e=uv$ is to delete the edge and then identify its ends. The resulting graph is denoted by $G/uv.$
	Let $I_n$ be an independent set of size $n$. For a matching \( M \) in a graph \( G \), we say \( M \) is a {\sl near-perfect matching} if \( M \) covers all but one vertex of \( G \).
	
	We introduce some lemmas which will be used in our proofs.
	\begin{lemma}[\cite{dirac1952some}]\label{Dirac}
		Let $G$ be a connected graph. If $P$ is a longest path of $G$ with ends $u$ and $v$, then
		$$
		|V(P)| \geq \min \{|V(G)|, d(u)+d(v)+1\} .
		$$
	\end{lemma}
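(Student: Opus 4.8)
The plan is to exploit the maximality of the longest path $P = v_0 v_1 \cdots v_\ell$ (with $u = v_0$ and $v = v_\ell$) to force all neighbors of the two endpoints onto $P$, and then to run a crossing argument bounding $d(u) + d(v)$ by the length of $P$. First I would observe that if $u$ had a neighbor outside $P$ we could prepend it to obtain a longer path, contradicting maximality; hence $N(u) \subseteq V(P)$, and symmetrically $N(v) \subseteq V(P)$. Writing $A = \{i : uv_i \in E\} \subseteq \{1, \dots, \ell\}$ and $B = \{j : v_j v \in E\} \subseteq \{0, \dots, \ell - 1\}$, we have $|A| = d(u)$ and $|B| = d(v)$.

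The heart of the argument is the shifted-index trick. Set $A' = \{i - 1 : i \in A\} \subseteq \{0, \dots, \ell - 1\}$, so that $|A'| = d(u)$. I would show $A' \cap B = \emptyset$ whenever $P$ is not Hamiltonian. Indeed, if some $k$ lies in $A' \cap B$, then $u v_{k+1} \in E$ and $v_k v \in E$, and the two chords $v_0 v_{k+1}$ and $v_k v_\ell$ together with the two subpaths of $P$ close up into the cycle $C = v_0 v_1 \cdots v_k v_\ell v_{\ell-1} \cdots v_{k+1} v_0$ spanning exactly $V(P)$.

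Next I would use connectivity to turn such a spanning cycle into a contradiction. If $V(P) \neq V(G)$, connectedness yields a vertex $w \notin V(P)$ adjacent to some $v_j \in V(C)$; cutting $C$ open at $v_j$ and attaching $w$ produces a path on $|V(P)| + 1$ vertices, longer than $P$, a contradiction. Hence either $V(P) = V(G)$, in which case $|V(P)| = |V(G)|$ and the claimed bound holds trivially, or $A' \cap B = \emptyset$. In the latter case, since $A'$ and $B$ are disjoint subsets of the $\ell$-element set $\{0, \dots, \ell - 1\}$, we get $d(u) + d(v) = |A'| + |B| \leq \ell$, so $|V(P)| = \ell + 1 \geq d(u) + d(v) + 1$. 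Combining the two cases gives $|V(P)| \geq \min\{|V(G)|, d(u) + d(v) + 1\}$.

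The main obstacle, and the only place where connectivity is genuinely used, is the step converting the spanning cycle $C$ into a strictly longer path: one must check that cutting the cycle at the attachment vertex really yields a simple path one longer than $P$, which is incompatible with $P$ being longest. The index bookkeeping (the passage from $A$ to $A'$, and the off-by-one between the ranges $\{1,\dots,\ell\}$ and $\{0,\dots,\ell-1\}$) is routine but must be carried out carefully so that the disjointness count is tight.
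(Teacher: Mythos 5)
Your proof is correct. Note, however, that the paper does not prove this lemma at all: it is quoted verbatim from Dirac's 1952 paper (reference \cite{dirac1952some}) and used as a black box, so there is no internal proof to compare against. What you have written is the standard self-contained argument for this classical fact: endpoint maximality forces $N(u),N(v)\subseteq V(P)$, the shifted-index sets $A'$ and $B$ must be disjoint unless the chords $uv_{k+1}$ and $v_kv$ close $P$ into a cycle spanning $V(P)$, and connectivity converts such a spanning cycle into a strictly longer path when $V(P)\neq V(G)$; the disjointness count $d(u)+d(v)=|A'|+|B|\leq \ell$ then yields the bound. All the delicate points (the off-by-one bookkeeping, the simplicity of the extended path, and the degenerate cases such as $uv\in E$) check out.
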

	\begin{lemma}[\cite{kopylov1977maximal}]\label{Kopylov}
		Let G be a 2-connected n-vertex graph with a path $P$ of $m$ edges with ends $x$ and $y$. For $v \in V(G)$, let $d_P(v)=|N(v) \cap V(P)|$. Then $G$ contains a cycle of length at least $\min \left\{m+1, d_P(x)+d_P(y)\right\}$.
	\end{lemma}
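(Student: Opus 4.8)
The plan is to index the path as $P = u_0 u_1 \cdots u_m$ with $u_0 = x$ and $u_m = y$, and to record the two ends' neighbours by the position sets $A = \{\,i : u_i \in N(x)\,\}$ and $B = \{\,j : u_j \in N(y)\,\}$, so that $|A| = d_P(x)$ and $|B| = d_P(y)$. If $xy \in E(G)$ then $P$ plus the edge $xy$ is already a cycle of length $m+1$, so I may assume $xy \notin E(G)$, whence $A, B \subseteq \{1,\ldots,m-1\}$. The engine of the whole argument is a rotation, the same device underlying Lemma~\ref{Dirac}: if some index $i$ satisfies $i \in A$ and $i-1 \in B$ (that is, $u_i \sim x$ and $u_{i-1} \sim y$), then
\[
x,\, u_i,\, u_{i+1},\, \ldots,\, u_m = y,\, u_{i-1},\, u_{i-2},\, \ldots,\, u_1,\, u_0 = x
\]
is a cycle through every vertex of $P$, hence of length $m+1$.

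First I would dispose of the case $d_P(x) + d_P(y) \ge m+1$. Here $A$ and the shifted set $B+1 := \{\,j+1 : j \in B\,\}$ both lie inside $\{1,\ldots,m\}$, and $|A| + |B+1| = d_P(x) + d_P(y) \ge m+1 > m$, so by pigeonhole they meet; any common index is a crossing index $i$ as above and produces a cycle of length $m+1 = \min\{m+1,\, d_P(x)+d_P(y)\}$. Notably this case uses no connectivity hypothesis at all.

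The substantive case is $d_P(x) + d_P(y) \le m$, where I must produce a cycle of length at least $d_P(x) + d_P(y)$ and where $2$-connectivity is genuinely needed: the path $P$ together with only the chords at $x$ and $y$ can fail to contain such a cycle, so the argument must exploit edges of $G$ off $P$. Let $a = \max A$ and $b = \min B$. The prefix $\{u_0,\ldots,u_a\}$ together with the chords from $x$ forms a fan on at least $d_P(x)+1$ vertices (since $a \ge d_P(x)$), through which one can route a spanning path ending at $u_a$ with an adjustable second endpoint; symmetrically the suffix $\{u_b,\ldots,u_m\}$ with the chords from $y$ is a fan on at least $d_P(y)+1$ vertices. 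I would then invoke Menger's theorem: since $G$ is $2$-connected there are two internally disjoint paths joining the two fans, and splicing a spanning path of each fan to these two connectors yields a single cycle covering both fans, of length at least $(d_P(x)+1) + (d_P(y)+1) > d_P(x)+d_P(y)$.

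The hard part is making this splice rigorous, and it is exactly where $2$-connectivity does its work. Two difficulties must be handled: when $a \ge b$ the two fans overlap along a common segment of $P$ and must be disentangled before they can be joined, and, more delicately, a fan admits spanning paths only between a restricted set of endpoint pairs, so the two connectors supplied by Menger's theorem must be chosen to attach at vertices compatible with such spanning paths while staying vertex-disjoint. Controlling these attachments so that the final cycle provably collects at least $d_P(x)+d_P(y)$ vertices is the crux of the proof; the $2$-connectivity is precisely what guarantees the indispensable second connector.
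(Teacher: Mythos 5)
First, a point of comparison: the paper does not prove Lemma~\ref{Kopylov} at all --- it is quoted from Kopylov's 1977 paper and used as a black box --- so your attempt has to stand entirely on its own. Its first half does stand. The reduction to $xy \notin E(G)$, and the crossing/pigeonhole argument showing that $d_P(x)+d_P(y) \ge m+1$ forces an index $i \in A \cap (B+1)$ and hence a cycle of length $m+1$, is correct and complete, and you rightly observe that this half uses no connectivity.

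The case $d_P(x)+d_P(y) \le m$, however, is a plan rather than a proof, and the plan as stated would fail. You propose to join the two fans by two internally disjoint paths obtained from Menger's theorem and to conclude that the resulting cycle covers \emph{all} vertices of both fans, giving length at least $(d_P(x)+1)+(d_P(y)+1)$. But Menger's theorem gives you no control over where the two connectors attach: they may end at a pair of fan vertices between which the fan has no spanning path (a fan is not Hamilton-connected; in the degenerate case $d_P(x)=1$ the $x$-fan is a cycle, which has spanning paths only between consecutive vertices), they may attach to vertices of $P$ lying strictly between the two fans, and when $a \ge b$ the two ``fans'' share vertices, so they are not even disjoint subgraphs to which Menger's theorem can be applied as stated. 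Without control of the attachment points, the spliced cycle can miss most of each fan, and no lower bound of the form $d_P(x)+d_P(y)$ follows; choosing the longer arc through each fan only yields roughly half of each. You name these obstacles yourself and defer them as ``the crux of the proof,'' which is exactly an admission that the essential step --- the only place where 2-connectivity enters, and the actual content of Kopylov's lemma --- is missing. As written, the proposal establishes only the easy half of the statement.
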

	
	\begin{lemma}[\cite{chakraborti2021many}]\label{ch}
		Let $r, w, x, y$, and $z$ be non-negative integers such that $r \geq 2$, $x+y=w+z, x \geq w, x \geq z$, and $x \geq r$. Then,
		$$
		\binom{x}{r}+\binom{y}{r} \geq\binom{ w}{r}+\binom{z}{r} .
		$$
		Moreover, the inequality is strict if $x>w$ and $x>z$.
	\end{lemma}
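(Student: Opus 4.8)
The plan is to treat this as a discrete convexity (majorization) statement and to transport the pair $(w,z)$ to the pair $(x,y)$ while keeping their sum fixed. Since the roles of $w$ and $z$ are symmetric, I would assume without loss of generality that $w \ge z$. From $x+y=w+z$ together with $x \ge w$ and $x \ge z$ one obtains $y = w+z-x \le \min\{w,z\} = z$, and moreover $x-w = z-y =: \delta \ge 0$. Thus the two pairs interleave as $x \ge w \ge z \ge y$ with equal ``gaps'' $\delta$ at both ends, which is exactly the configuration in which convexity of the map $t \mapsto \binom{t}{r}$ should force $\binom{x}{r}+\binom{y}{r} \ge \binom{w}{r}+\binom{z}{r}$.

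The key computational step is to convert the two relevant differences into sums of lower binomials via the Pascal/telescoping identity $\binom{t+1}{r}-\binom{t}{r} = \binom{t}{r-1}$. This gives $\binom{x}{r}-\binom{w}{r} = \sum_{i=0}^{\delta-1}\binom{w+i}{r-1}$ and $\binom{z}{r}-\binom{y}{r} = \sum_{i=0}^{\delta-1}\binom{y+i}{r-1}$, two sums with the same number $\delta$ of terms. I would then compare them termwise: since $w \ge z \ge y$ we have $w+i \ge y+i$ for every $i$, and $t \mapsto \binom{t}{r-1}$ is nondecreasing, so each term of the first sum dominates the matching term of the second. Summing yields $\binom{x}{r}-\binom{w}{r} \ge \binom{z}{r}-\binom{y}{r}$, which rearranges to the desired inequality.

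For the strict case I would isolate the top term. When $x > w$ and $x > z$ we have $\delta = x-w \ge 1$, so both sums are nonempty, and the $i=\delta-1$ term compares $\binom{x-1}{r-1}$ with $\binom{z-1}{r-1}$ (using $y+\delta = z$). Here the hypothesis $x \ge r$ is decisive: it places $x-1$ in the range where $t \mapsto \binom{t}{r-1}$ is \emph{strictly} increasing, and since $x-1 > z-1$, this single term already yields a strict gain while all remaining terms contribute nonnegatively.

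The main obstacle, and the only genuinely delicate point, is strictness rather than the inequality itself: binomial coefficients are only weakly monotone (they vanish below the threshold $t=r-1$), so the termwise comparison is a priori non-strict. The role of the assumption $x \ge r$ is precisely to guarantee that the largest index appearing, namely $x-1$, lies past this threshold, which is what converts the weak termwise bound into a strict one. I would also note that the boundary cases (very small $y$, or $r-1$ exceeding some indices) are harmless, since any such term equals $0$ on both sides and thus affects neither the inequality nor the location of the strict term.
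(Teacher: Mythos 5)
Your proof is correct. Note that the paper itself gives no argument for this lemma --- it is quoted with a citation to Chakraborti--Chen \cite{chakraborti2021many} --- so there is no in-paper proof to compare against; your write-up is a complete, self-contained substitute. Your route (reduce by symmetry to $x\ge w\ge z\ge y$ with equal gaps $\delta=x-w=z-y$, telescope both differences via $\binom{t+1}{r}-\binom{t}{r}=\binom{t}{r-1}$, and compare the two $\delta$-term sums termwise) is essentially the standard convexity proof, and it matches in spirit the argument in the cited source, which is usually organized instead as $\delta$ repeated unit shifts $(w,z)\mapsto(w+1,z-1)$, each justified by the single comparison $\binom{w}{r-1}\ge\binom{z-1}{r-1}$; the two bookkeepings are interchangeable. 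Your handling of strictness is also sound and is the only place the hypothesis $x\ge r$ is needed: with $\delta\ge 1$ the top terms are $\binom{x-1}{r-1}$ versus $\binom{z-1}{r-1}$, and since $x-1\ge r-1$ and $x-1>z-1\ge 0$, either $z-1<r-1$ (so the second term vanishes while the first is at least $1$) or both indices lie in the range where $t\mapsto\binom{t}{r-1}$ strictly increases; spelling out this two-case split would make that step airtight, but as written it is only an issue of exposition, not of substance.
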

	
	We use \( \mathcal{G}_{\text{tree}}(F_1, F_2, \dots, F_\ell) \) to denote the family of connected graphs $G$ if the blocks of $G$ are $F_1, F_2, \dots, F_\ell$.  We refer to the graph \( G \) in \( \mathcal{G}_{\text{tree}}(F_1, F_2, \dots, F_\ell) \) as the {\sl block-cut tree} with blocks \( F_1, F_2, \dots, F_\ell .\) Additionally, we call $G\in \mathcal{G}_{\text{tree}}(F_1, F_2, \dots, F_\ell)$ a  {\sl block-cut star}  if all blocks $F_1, F_2, \dots, F_\ell$ share exactly one common vertex.
	In a block-cut tree \( G \in \mathcal{G}_{\text{tree}}(F_1, F_2, \dots, F_\ell) \), a block is called a {\sl block-leaf} if the block contains exactly one cut-vertex of $G$. A block-cut tree \( G \in  \mathcal{G}_{\text{tree}}(F_1, F_2, \dots, F_\ell) \) is defined to be {\sl strict} if  each \( F_i \) is a  2-connected graph. The common vertex in a block-cut star is referred to as the {\sl center vertex}. Since every connected graph can be decomposed into a block-cut tree,  if a block-cut tree is not strict, it must contain a cut-edge as a block.
	
	\begin{lemma}\label{near-perfect matching}
		Let \( \ell \) be a positive integer. For a  strict block-cut tree \( G\in \mathcal{G}_{\text{tree}}(F_1, F_2, \dots, F_\ell) \), if every \( F_i, i\in[\ell]\), is a Hamiltonian graph with odd order. Then for any vertex \( v \in V(G) \), there exists a near-perfect matching \( M \) of \( G\) such that \( v \notin V(M) \).
	\end{lemma}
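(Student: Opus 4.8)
The plan is to induct on the number of blocks $\ell$, building the near-perfect matching by peeling off one block-leaf at a time. As a preliminary remark, $|V(G)|$ is odd: a single block $F_1$ has odd order, and attaching each further block $F_j$ at its single cut-vertex contributes $|V(F_j)|-1$ (an even number) of new vertices, so the parity of $|V(G)|$ is preserved. Thus missing exactly one vertex is the correct target. For the base case $\ell=1$, the graph $G=F_1$ is Hamiltonian of odd order; given $v$, I delete $v$ from a Hamilton cycle to obtain a Hamilton path of $G-v$ on an even number of vertices, and take every second edge to get a perfect matching of $G-v$, i.e. a near-perfect matching avoiding $v$.

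For the inductive step with $\ell\geq 2$, I would fix any block-leaf $F_i$ and let $c$ be its unique cut-vertex. Setting $G'=G\setminus(V(F_i)\setminus\{c\})$, the fact that $F_i$ meets the rest of $G$ only in $c$ guarantees that $G'$ is again a strict block-cut tree, now with blocks $\{F_j : j\neq i\}$, each Hamiltonian of odd order; moreover $c$ is a cut-vertex of $G$ and hence lies in at least one remaining block, so $c\in V(G')$. The induction hypothesis therefore applies to $G'$ (which has $\ell-1$ blocks) and to any of its vertices. Since $V(F_i)\setminus\{c\}$ has even cardinality, the Hamilton cycle of $F_i$ lets me realize inside $F_i$ either a perfect matching of $F_i-v$ (when $v\in V(F_i)$) or a perfect matching of $F_i-c$ that covers precisely the private vertices of $F_i$ and leaves $c$ uncovered.

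I would then split according to whether $v$ lies in $F_i$. If $v\in V(F_i)$, take the matching $M_i$ of $F_i-v$ inside $F_i$ (which covers $c$ exactly when $v\neq c$), apply the induction hypothesis to $G'$ with the distinguished vertex $c$ to obtain a near-perfect matching $M'$ of $G'$ missing $c$, and put $M=M_i\cup M'$. If instead $v\notin V(F_i)$, so that $v\in V(G')\setminus\{c\}$, take the matching $M_i$ of $F_i-c$ inside $F_i$, apply the induction hypothesis to $G'$ with the distinguished vertex $v$ to obtain a near-perfect matching $M'$ missing $v$ (and hence covering $c$), and again set $M=M_i\cup M'$.

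The crux is the bookkeeping at the cut-vertex $c$, the only vertex shared by the two pieces. In the first case $M_i$ covers $c$ while $M'$ avoids it (or, when $v=c$, neither covers $c$), and in the second case $M'$ covers $c$ while $M_i$ avoids it; in every case at most one of $M_i,M'$ meets $c$, so $M$ is a genuine matching. Tallying the covered vertices then shows that $M$ misses exactly $v$, which closes the induction. I expect this non-overlap verification at $c$ across the three subcases $v=c$, $v\in V(F_i)\setminus\{c\}$, and $v\notin V(F_i)$ to be the only delicate point; everything else follows from the single-block Hamilton-cycle observation.
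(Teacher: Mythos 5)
Your proof is correct and takes essentially the same route as the paper's: induction on $\ell$, peeling off a block-leaf with cut-vertex $c$, splitting into the cases $v\in V(F_i)$ and $v\notin V(F_i)$, and using the odd Hamilton cycle of each block to produce a perfect matching of that block minus one prescribed vertex. Your additional bookkeeping (the parity remark and the explicit check that at most one of the two partial matchings covers $c$) only makes explicit what the paper's terser argument leaves implicit.
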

	
	\begin{proof}
		We prove the statement by induction on \( \ell \). For \( \ell = 1 \), \( G \) has a Hamiltonian cycle with odd order, and the result holds trivially. For $\ell\geq 2$, without loss of generality, let \( F_\ell \) be a block-leaf of \( G\). Let \( G' := G \setminus F_\ell \), and $u$ be the cut-vertex  between \( V(F_\ell) \) and \( V(G') \). For any vertex \( v \in V(G) \), we consider the following two cases.
		If \( v \in V(G') \), by the induction hypothesis, we can always find a near-perfect matching \( M_1\) in \( G' \) excluding \( v \). Recall that \( F_\ell \) is a Hamiltonian graph with odd order.  We can find a perfect matching $M_2$ in \(F_\ell \setminus \{u\} \). Clearly, \( M_1 \cup M_2 \) is a near-perfect matching of \( G \) excluding \( v \). If \( v \in V(F_\ell) \), one can  find a near-perfect matching \( M_3 \) in \( F_\ell \) excluding $v$ and a perfect matching \( M_4 \)  in \( V(G') \setminus \{u\} \). Then, $M_3\cup M_4$ is a near-perfect matching of $G$ excluding $v$. The proof is complete.
	\end{proof}
	
	\begin{lemma}\label{star}
		Let \( G \) be a strict block-cut tree and \( G^* \) be a block-cut star in \( \mathcal{G}_{\text{tree}}(F_1, F_2, \dots,F_\ell)\). If every block \( F_i \) is a Hamiltonian graph, then \( \nu(G) \geq \nu(G^*) \).
	\end{lemma}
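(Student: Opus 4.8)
The plan is to first pin down $\nu(G^{*})$ exactly, and then to induct on the number of blocks $\ell$, peeling off a block-leaf and invoking Lemma \ref{near-perfect matching} to dispose of the single troublesome case. Write $n_i=|V(F_i)|$. Since the center $w$ of the block-cut star lies in every block and any two blocks meet only in $w$, every matching of $G^{*}$ uses at most one edge at $w$: a block that avoids $w$ contributes a matching of $F_i-w$ of size at most $\lfloor (n_i-1)/2\rfloor$, while the unique block that may cover $w$ contributes at most $\lfloor n_i/2\rfloor$. Using that each $F_i$ is Hamiltonian (so these bounds are attained, with $w$ covered in an even block whenever one exists), I would obtain
$$\nu(G^{*})=\sum_{i=1}^{\ell}\Big\lfloor\frac{n_i-1}{2}\Big\rfloor+\varepsilon,$$
where $\varepsilon=1$ if some $n_i$ is even and $\varepsilon=0$ otherwise. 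In particular $\nu(G^{*})$ depends only on the orders $n_1,\dots,n_\ell$.

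For the induction the base case $\ell=1$ is trivial, since then $G=G^{*}=F_1$. For $\ell\ge 2$ I would choose a block-leaf $F_\ell$ of $G$ with cut-vertex $u$, set $G':=G\setminus F_\ell$ (a strict block-cut tree with blocks $F_1,\dots,F_{\ell-1}$), and let $G'^{*}$ be the corresponding block-cut star. The induction hypothesis gives $\nu(G')\ge\nu(G'^{*})$, so it suffices to prove the increment inequality $\nu(G)-\nu(G')\ge\nu(G^{*})-\nu(G'^{*})$. Assembling a matching of $G$ from one of $G'$ together with a matching of $F_\ell$ (which, being Hamiltonian, has a matching of size $\lfloor n_\ell/2\rfloor$ covering $u$ and a matching of $F_\ell-u$ of size $\lfloor (n_\ell-1)/2\rfloor$), I would record the two lower bounds
$$\nu(G)\ge\nu(G'-u)+\Big\lfloor\frac{n_\ell}{2}\Big\rfloor\qquad\text{and}\qquad\nu(G)\ge\nu(G')+\Big\lfloor\frac{n_\ell-1}{2}\Big\rfloor.$$
From the displayed formula, the corresponding star increment $\nu(G^{*})-\nu(G'^{*})$ equals $\lfloor (n_\ell-1)/2\rfloor$ when $n_\ell$ is odd, equals $n_\ell/2-1$ when $n_\ell$ is even and some earlier block is even, and equals $n_\ell/2$ when $n_\ell$ is even and all of $F_1,\dots,F_{\ell-1}$ are odd.

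It then remains to check the increment inequality case by case. If some maximum matching of $G'$ misses $u$ (that is, $\nu(G'-u)=\nu(G')$), the first bound yields $\nu(G)-\nu(G')\ge\lfloor n_\ell/2\rfloor$, which dominates every star increment listed above. If instead $u$ is covered by every maximum matching of $G'$ (so $\nu(G'-u)=\nu(G')-1$), the second bound yields $\nu(G)-\nu(G')\ge\lfloor (n_\ell-1)/2\rfloor$, which suffices in all cases except the one where $n_\ell$ is even and all earlier blocks are odd, in which the star increment is $n_\ell/2$.

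The crux, and the only genuine obstacle, is to rule out exactly this last situation. But when $F_1,\dots,F_{\ell-1}$ are all odd Hamiltonian blocks, $G'$ is a strict block-cut tree to which Lemma \ref{near-perfect matching} applies, so $G'$ is factor-critical; hence every vertex, and $u$ in particular, is missed by some near-perfect—and therefore maximum—matching, forcing $\nu(G'-u)=\nu(G')$ and placing us in the first subcase. Thus the offending case never arises, the increment inequality holds throughout, and combining it with the induction hypothesis gives $\nu(G)\ge\nu(G^{*})$, completing the proof.
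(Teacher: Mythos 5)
Your proof is correct, and it shares the paper's skeleton---induction on $\ell$, peeling off a block-leaf, with Lemma \ref{near-perfect matching} as the key auxiliary tool---but the bookkeeping and the handling of the critical case are genuinely different. The paper works with the deficiency (number of unmatched vertices): it observes that $t(G^*)=q-1$, where $q$ is the number of even blocks, and proves $t(G)\le q-1$ by an induction that \emph{chooses the leaf adaptively}: it peels an odd block-leaf whenever one exists, and otherwise uses that all (at least two) block-leaves are even, so the remainder $G'$ still contains an even block and the induction hypothesis $t(G')\le q'-1$ leaves exactly the unit of slack needed. You instead compute $\nu(G^*)$ exactly, peel an \emph{arbitrary} leaf, and reduce everything to an increment inequality $\nu(G)-\nu(G')\ge\nu(G^*)-\nu(G'^*)$; the single configuration where your second lower bound falls short (an even leaf over an all-odd remainder, star increment $n_\ell/2$) is ruled out by invoking Lemma \ref{near-perfect matching} to conclude that $G'$ is factor-critical, so that $u$ is missed by some maximum matching of $G'$ and the stronger first bound takes over. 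The two mechanisms are interchangeable: the paper's adaptive leaf choice confines Lemma \ref{near-perfect matching} to the separately treated case $q=0$, while your argument uses it as the engine of the induction, yields the closed formula $\nu(G^*)=\sum_i\lfloor (n_i-1)/2\rfloor+\varepsilon$ as a byproduct, and never needs to select the leaf carefully. Both proofs are complete; yours is slightly longer but makes more explicit where Hamiltonicity and the parity of the blocks are actually used.
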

	
	\begin{proof}
		Let \( q \) be the number of  Hamiltonian graphs with even order in \( \{F_1, F_2, \dots, F_\ell\} \).
		When \( q = 0 \), it follows from Lemma \ref{near-perfect matching} that \(\nu(G)=\nu (G^*)\).
		If \( q \geq 1 \),  let \( t(G) \) and \( t(G^*) \) be the number of unmatched vertices in \( G \) and \( G^* \), respectively. To show \(\nu(G)\geq\nu (G^*)\), it suffices to prove that \(t(G)\le t (G^*)\).
		It is easy to see that \( t({G^*}) = q - 1 \). Next we prove that \( t({G}) \leq q - 1 \) by induction on $\ell$. For \( \ell = 1 \), since \( G \) is a  Hamiltonian graph with even order, we have \( t(G) = 0 \). The result holds.

		For \(\ell\geq 2 \), if there is a block-leaf with odd order in \( G\), without loss of generality, assume the block-leaf is \( F_\ell \). Let \( G' := G \setminus F_\ell \) and \( u \) be the cut-vertex between \( G' \) and \( F_\ell \). Since \( F_\ell \) is Hamiltonian, it is easy to see that there exists a maximum matching of \( G \) that matches all vertices in \( F_\ell \setminus\{u\} \). Therefore, \( t(G) = t(G') \leq q - 1 \) by the induction hypothesis.
		
		If all block-leaves in \( G \) have even order, note that \( G \) has at least two block-leaves. Thus for a block-leaf $F_\ell$,  \( G' = G \setminus F_\ell \) has \( q' := q - 1 \geq 1 \) blocks and each block is a  Hamiltonian graph with even order. Since \( F_\ell \) is Hamiltonian, it is easy to see that there exists a maximum matching of \( G \) that unmatched at most one vertex in \( F_\ell \setminus\{ u \}\). Thus, \( t(G) \leq t(G') + 1 \leq (q' - 1) + 1 = q - 1 \) by the induction hypothesis.
		The proof is complete.
	\end{proof}

	\begin{lemma}\label{contract}
		Let G be a graph and $uv\in E(G)$. If $G$ is $\left\{C_{\geq k}, M_{s+1}\right\}$-free, then $G/uv$ is also $\left\{C_{\geq k}, M_{s+1}\right\}$-free.
	\end{lemma}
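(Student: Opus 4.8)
The plan is to treat the two forbidden structures separately, showing in each case that a copy inside $G/uv$ can be lifted back to a copy of the same—or a longer—forbidden structure in $G$, contradicting the hypothesis. Throughout, write $w$ for the single vertex of $G/uv$ obtained by identifying $u$ and $v$, and recall the defining properties of contraction: every edge of $G/uv$ not incident to $w$ is already an edge of $G$, while every edge $wx$ of $G/uv$ corresponds to $ux\in E(G)$ or $vx\in E(G)$ (or both).

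First I would rule out a matching $M_{s+1}$ in $G/uv$. Suppose $M'$ were such a matching. If $w$ is not covered by $M'$, then all edges of $M'$ lie among $V(G)\setminus\{u,v\}$ and hence are edges of $G$, so $M'$ is already a matching of size $s+1$ in $G$. If $w$ is covered, say by $wx\in M'$, then replace this edge by whichever of $ux,vx$ belongs to $E(G)$; the remaining edges of $M'$ are unchanged edges of $G$ avoiding both $u$ and $v$, so the result is a matching of size $s+1$ in $G$. Either way we contradict the $M_{s+1}$-freeness of $G$, so $G/uv$ is $M_{s+1}$-free.

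The more delicate part is the circumference, and this is where I expect the crux to lie. Suppose $G/uv$ contains a cycle $C'$ of length at least $k$. If $w\notin V(C')$, then $C'$ lies entirely in $V(G)\setminus\{u,v\}$ and all its edges belong to $G$, so $C'$ is itself a cycle of length $\geq k$ in $G$. Otherwise $w\in V(C')$; let $x_1$ and $x_2$ be the two neighbours of $w$ along $C'$, which are distinct as the two neighbours of a vertex on a cycle. The incident edges $wx_1,wx_2$ lift to edges of $G$ ending at $u$ or $v$. If both lift to the same endpoint, say $ux_1,ux_2\in E(G)$, then replacing $w$ by $u$ turns $C'$ into a cycle of the same length in $G$. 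If they lift to different endpoints, say $ux_1\in E(G)$ and $vx_2\in E(G)$, then the $x_1$--$x_2$ subpath of $C'$ avoiding $w$, together with the edges $ux_1$, $vx_2$, and the edge $uv\in E(G)$, forms a cycle in $G$ whose length exceeds that of $C'$ by exactly one; in particular it has length $\geq k+1>k$. Note that $u,v$ are absent from this subpath (its vertices are exactly the vertices of $C'$ other than $w$), so this is a genuine cycle. In every case $G$ contains a cycle of length $\geq k$, contradicting $C_{\geq k}$-freeness.

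Combining the two parts shows $G/uv$ is $\{C_{\geq k},M_{s+1}\}$-free. The only genuine subtlety is the last cycle subcase, where the two edges of $C'$ at $w$ originate from the two different endpoints of the contracted edge; there one must crucially invoke the presence of $uv$ in $G$ to close up a cycle that is in fact one longer than $C'$, and it is precisely this step that keeps the contraction operation safe for the circumference constraint.
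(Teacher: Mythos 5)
Your proof is correct and follows essentially the same route as the paper: lift any forbidden structure in $G/uv$ back to $G$, with the key case being a cycle through $w$ whose two incident edges originate from different endpoints of the contracted edge, where inserting $uv$ yields a cycle in $G$ that is one edge longer. The only difference is that you spell out the matching case and the case $w \notin V(C')$ explicitly, which the paper dismisses as obvious.
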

	
	\begin{proof}
		Obviously, \( G / uv \) is \( M_{s+1} \)-free. We now show that \( G / uv \) is \( C_{\geq k} \)-free. Denote the new vertex in \( G / uv \) by \( w \). Suppose for the sake of contradiction, there exists  a cycle in \( C_{\geq k} \) in $G/uv$, which is referred to  as \( Q \).
		Obviously,  $Q$ contains  vertex $w$ and two edges, say $wu_1, wv_1$.  Then neither both $u_1, v_1$ are  adjacent to $u$ nor both $u_1, v_1$ are  adjacent to $v$. Otherwise, $G$ contains  a cycle in $C_{\geq k}$. Without loss of generality, assume $u_1$ is adjacent to $u$, $v_1$ is adjacent to $v$. By replacing \( u_1wv_1 \) with \( u_1uvv_1 \), we
		find  a cycle in \( C_{\geq k} \)
		in $G$, a contradiction.
	\end{proof}
	To give the main proofs, we need the following key lemma.
	\begin{lemma}\label{stability}
		Let \( s \), $k$ be two integers and $p=\lfloor\frac{k-1}{2}\rfloor+1$. For any sufficiently large \( n \) and $p\leq s$, there exist an integer \(t_0\leq \binom{2s}{p}p+ 2s +1-p\) and a \(\{C_{\geq k}, M_{s+1}\} \)-free graph \( H \) on \( n \) vertices with \(\mathcal{N}(K_r, H) = \operatorname{ex}(n, K_r, \{C_{\geq k}, M_{s+1}\} )\) and a partition \( V(H) = X \cup Y \cup Z \) that satisfies the following:
		\begin{itemize}
			\item[\rm (1)] \( H[X] = K_{p-1} \);
			\item[\rm (2)] \( Y \) is an independent set with \( |Y| = n - t_0 - p + 1 \) and each vertex in \( Y \) has the neighborhood \( X \);
			\item[\rm (3)] every vertex in \( Z \) has a degree of at least \( p \), and all its neighbors are contained in \( X \cup Z \).
		\end{itemize}
	\end{lemma}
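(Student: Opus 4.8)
The plan is to start from an arbitrary graph $H_0$ realizing $\operatorname{ex}(n,K_r,\{C_{\ge k},M_{s+1}\})$ and reshape it, through a bounded number of count-preserving and freeness-preserving edits, into a graph with the advertised partition. First I fix a maximum matching $M$ of $H_0$; since $H_0$ is $M_{s+1}$-free we have $\nu(H_0)\le s$, so $U:=V(M)$ satisfies $|U|=2\nu(H_0)\le 2s$. By maximality of $M$ the set $I:=V(H_0)\setminus U$ is independent and $N_{H_0}(v)\subseteq U$ for every $v\in I$. Because $I$ is independent, no copy of $K_r$ uses two vertices of $I$, so the clique count splits as
\[
\mathcal{N}(K_r,H_0)=\mathcal{N}(K_r,H_0[U])+\sum_{v\in I}\mathcal{N}(K_{r-1},H_0[N_{H_0}(v)]),
\]
where the first term is at most $\binom{2s}{r}=O(1)$.

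The first substantive step is a cycle obstruction that confines the ``heavy'' part of $I$ to a bounded set. I claim that no $p$ vertices of $I$ can have $p$ common neighbours in $U$: given such $v_1,\dots,v_p\in I$ and common neighbours $u_1,\dots,u_p\in U$, the alternating cycle $v_1u_1v_2u_2\cdots v_pu_pv_1$ has length $2p$, and since $p=\lfloor\frac{k-1}{2}\rfloor+1$ forces $k\in\{2p-1,2p\}$ we get $2p\ge k$, a forbidden member of $C_{\ge k}$. Hence for every $p$-subset $T\subseteq U$ at most $p-1$ vertices $v\in I$ satisfy $T\subseteq N(v)$, so $\sum_{v\in I}\binom{d(v)}{p}=\sum_{T}|\{v:T\subseteq N(v)\}|\le(p-1)\binom{2s}{p}$, and consequently the number of $I$-vertices of degree at least $p$ is at most $(p-1)\binom{2s}{p}$. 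I therefore split $I=I_{\ge p}\cup L$ with $L:=\{v\in I:d(v)\le p-1\}$, and put the bounded part $I_{\ge p}\cup U$ aside as the prospective core. Since each $v\in I_{\ge p}$ contributes at most $\binom{2s}{r-1}$ and $H_0[U]$ contributes $O(1)$, the bulk of $\mathcal{N}(K_r,H_0)$ must come from $L$; comparing with the linear lower bound coming from the construction $K_{p-1}\vee I_{n-p+1}$ (which is $\{C_{\ge k},M_{s+1}\}$-free and has $\Theta(n)$ copies of $K_r$) shows that almost every vertex of $L$ contributes the maximum possible $\binom{p-1}{r-1}$, i.e.\ sees a clique $K_{p-1}$ inside $U$.

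The main step, and the one I expect to be the genuine obstacle, is to symmetrize the low-degree part $L$ onto a single clique. I select a $(p-1)$-clique $X\subseteq U$ that is the neighbourhood of a contribution-maximal vertex of $L$, reattach every vertex of $L$ so that its neighbourhood becomes exactly $X$, set $Y:=L$ and $Z:=(U\setminus X)\cup I_{\ge p}$. Reattachment does not decrease the count, so extremality is preserved provided the resulting $H$ stays $\{C_{\ge k},M_{s+1}\}$-free. Alternating cycles through $Y$ use at most $|X|=p-1$ vertices of $Y$ and hence have length at most $2(p-1)<k$, but the delicate point is that a long cycle could still weave through $X$, $Y$ and the core $Z$; controlling this requires that the $2$-connected blocks of $X\cup Z$ be short, which I establish using the circumference bounds of Lemma~\ref{Kopylov} and Lemma~\ref{Dirac}, while edge contraction (Lemma~\ref{contract}) is the vehicle that lets each local edit be performed without ever creating a member of $C_{\ge k}$. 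For the matching side I use that $X\subseteq U$ and that $Y$ now meets only $X$, together with the block-cut-tree matching estimates of Lemma~\ref{near-perfect matching} and Lemma~\ref{star}, to certify that $\nu(H)\le s$ survives the reattachment; maintaining both $C_{\ge k}$-freeness and $\nu\le s$ \emph{simultaneously} under global reattachment is precisely the hard part.

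Finally, a cleaning phase delivers properties (1)--(3). I iteratively relocate into $Y$ (re-attaching to $X$) any vertex of the core whose degree inside $X\cup Z$ falls below $p$; each such move only preserves or increases the count and the process terminates because the core is finite, after which every remaining vertex of $Z$ has degree at least $p$ with all neighbours in $X\cup Z$. What is left is exactly $H[X]=K_{p-1}$, an independent set $Y$ whose common neighbourhood is $X$, and a set $Z$ with $|Z|=t_0=|U\setminus X|+|I_{\ge p}|\le(2s-p+1)+(p-1)\binom{2s}{p}\le\binom{2s}{p}p+2s+1-p$, giving the stated bound on $t_0$. I expect no difficulty in the matching decomposition or the counting bound on $I_{\ge p}$; the crux is the freeness-preservation of the symmetrization, for which the contraction lemma and the block-cut-tree lemmas are exactly the tools assembled above.
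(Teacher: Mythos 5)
Your opening moves coincide with the paper's: the maximum-matching decomposition $V=U\cup I$ with $|U|\le 2s$, the alternating-cycle/pigeonhole bound on the number of independent vertices of degree at least $p$ (the paper gets $p\binom{2s}{p}$, your $(p-1)\binom{2s}{p}$ is the same idea), and the count comparison against $K_{p-1}\vee I_{n-p+1}$ showing that all but a bounded number of low-degree vertices see a $(p-1)$-clique. The gap is in the symmetrization step — exactly the step you flag as ``the hard part'' and then do not prove. Two things go wrong. First, your choice of $X$, the neighbourhood of a \emph{single} contribution-maximal vertex of $L$, is too weak to make any reattachment safe. The paper runs a second counting-plus-pigeonhole step: since $|W'|$ grows linearly in $n$ while $U$ has at most $\binom{2s}{p-1}$ subsets of size $p-1$, some $(p-1)$-clique $X\subseteq U$ is the exact neighbourhood of at least $c_0>\max\{k+1,2s+3\}$ vertices. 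This abundance is what powers freeness preservation, via an exchange argument you are missing: if a forbidden $Q$ (long cycle or $M_{s+1}$) appears after a vertex $v$ is reattached to $X$, then every $Q$-edge at $v$ ends in $X$; a cycle can contain at most $p-1$ other vertices whose two cycle-neighbours both lie in $X$ (each vertex of $X$ has only two slots on the cycle), and a copy of $M_{s+1}$ has only $2s+2$ vertices, so one of the $c_0$ originally $X$-attached vertices lies outside $Q$ and can be substituted for $v$, exhibiting $Q$ inside the original extremal graph — a contradiction.

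Second, the tools you propose cannot close this hole. At this stage nothing is known about $H_0[U\cup I_{\ge p}]$ beyond its being $\{C_{\ge k},M_{s+1}\}$-free: the facts that the blocks of $X\cup Z$ are short Hamiltonian cliques are proved in the paper only \emph{after} Lemma \ref{stability}, using the lexicographic optimization of $\Phi$, so invoking Lemmas \ref{Kopylov} and \ref{Dirac} (whose hypotheses — minimum degree in a $2$-connected block, longest-path endpoints — you do not yet have) and the block-cut-tree matching lemmas here is circular. And the danger is real: with only one vertex known to be attached to all of $X$, a global reattachment of $L$ can create a cycle of length at least $k$ that uses two $Y$-vertices and two disjoint paths of $Z$ whose endpoints neighbour $X$, or a matching of size $s+1$ combining $\nu(H[Z])$ edges with $p-1$ edges from $Y$ into $X$; neither configuration need have a preimage in $H_0$, because a single $X$-attached vertex cannot be substituted for two reattached ones. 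The repair is the paper's scheme: choose $X$ by the pigeonhole above, reattach vertices one at a time, and apply the exchange argument at every step — this also covers your final cleaning phase, where the same issue recurs for the low-degree vertices of $U\setminus X$.
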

	\begin{proof}
		Let \( G \) be a \(\{C_{\geq k}, M_{s+1}\}\)-free graph with \( \mathcal{N}(K_r, G) = \text{ex}(n, K_r, \{C_{\geq k}, M_{s+1}\}) \). The assumption  \( p \leq s \)  implies that \( K_{p-1} \vee I_{n-p+1} \) is \(\{C_{\geq k}, M_{s+1}\}\)-free.
		Hence,
		\begin{align}
			\mathcal{N}(K_r, G) &\geq \binom{p-1}{r-1}(n - p + 1) + \binom{p-1}{r}.\label{1}
		\end{align}
		
		Let \( U \) be the set of  vertices matched by a maximum matching of \( G \).
		Then \( |U| \leq 2s \), and \( V(G) \setminus U \) forms an independent set. Define \( L := \{ v \in V(G) \setminus U : d(v) \geq p \} \). One can check that \( |L| \leq p \binom{2s}{p } \). If not, note that each vertex in \( L \) has a neighborhood in \( U \) with order at least \( p \) and \( U \) has at most \( \binom{2 s}{p} \) such subsets. By the Pigeonhole Principle, there must exist \( p \) vertices of \( L \) sharing at least \( p \) common neighbors in \( U \), leading to a cycle in \( C_{\geq k} \), a contradiction.
		
		Define \( W = V(G) \setminus (U \cup L) \). Then every vertex in \( W \) has degree at most \( p - 1 \) and
		\begin{align}
			|W| \geq n - 2s - \binom{2s}{p}p.\label{2}
		\end{align}
		Now, let \( W^{\prime} \subseteq W \) be the set of vertices whose neighborhood induces a \( (p-1) \)-clique. Then, for any vertex \( v \in W \backslash W^{\prime} \), the number of \( r \)-cliques containing \( v \) is at most \( \binom{p-1}{r-1} - 1 \). So we have
		\begin{align}
			\mathcal{N}(K_r, G)
			&\leq \left|W'\right| \binom{p-1}{r-1} + \left|W \setminus W'\right| \left(\binom{p-1}{r-1} - 1\right) + \binom{n - |W|}{r}.\label{3}
		\end{align}
		By  \eqref{1} and \eqref{3}, we get
		\begin{align*}
			\binom{p-1}{r-1}(n-p+1)+\binom{p-1}{r} \leq\left|W'\right|\binom{p-1}{r-1}+|W\setminus W'|\left(\binom{p-1}{r-1}-1\right)+\binom{n-|W|}{r}.
		\end{align*}
		Combining with \eqref{2}, we  have
		\begin{align*}
			\binom{p-1}{r-1}|W'| & \geq(n-p+1)\binom{p-1}{r-1}+\binom{p-1}{r}-n\left(\binom{p-1}{r-1}-1\right)-\binom{2 s+p\binom{2 s}{p}}{r} \\
			& \geq n-(p-1)\binom{p-1}{r-1}+\binom{p-1}{r}-\binom{2 s+p\binom{2 s}{p}}{r} \\
			& \geq \frac{n}{2} \quad \text{(as $n$ is sufficiently large)}.
		\end{align*}
		Thus there exists an integer $c_0>\max\{k+1, 2s+3\}$ such that  \( \left|W^{\prime}\right| \geq c_0 \binom{2s}{p-1} \) as \( n \) is sufficiently large. Since  $\text{the number of } (p-1) \text{-sets in } U \text{ is at most } \binom{2 s}{p-1}$, and  \( \left|W'\right| \geq c_0 \binom{2s}{p-1} \), by Pigeonhole
		Principle, there is at least one $(p-1)$-set $X$ in $U$, which is the neighborhood of at least $c_0$ vertices in $W'.$ Recall the definition of $W^{\prime}$, we have $G[X] = K_{p-1}$.
		
		If there exists a vertex \( v \) in \( V(G) \) with degree at most \( p - 1 \), we replace its neighborhood with \( X \), creating a new graph \( G^{\prime} \). We claim that \( G^{\prime} \) is \( \{C_{\geq k}, M_{s+1}\} \)-free. If \( G^{\prime} \) contains a cycle of length at least $k$ or a copy of \( M_{s+1} \), denoted by \( Q \), then \( Q \) must contain the vertex \( v \). Since \( |N(X)| \geq c_0 \), we can always find a vertex \(v'\in\bigcap_{x\in X}N_G(x) \setminus V(Q) \),  and then replace vertex \( v \) by \( v^{\prime} \). Since \( v \) and \( v^{\prime} \) share identical neighborhoods, \( G \) contains a copy of $Q$, a contradiction.  Moreover, such processes do not decrease the number of the $K_r$.
		Now, we keep repeating this process until no vertex in the graph has a degree less than $p-1$, and all vertices with degree $p-1$ share the same neighborhood $X$. The resultant graph is denoted as $H$. Then $H$ is $\left\{C_{\geq k}, M_{s+1}\right\}$-free and $\mathcal{N}\left(K_r, H\right)=\operatorname{ex}\left(n, K_r,\left\{C_{\geq k}, M_{s+1}\right\}\right)$. Note that $H[X]=K_{p-1}$. Let $Y$ denote the set of vertices in $H$ with a degree of $p-1$, then $|Y|\geq |W|\geq  n - 2s - \binom{2s}{p}p$. Finally, we define $Z=V(H) \backslash(X \cup Y)$, then every vertex in $Z$ has a degree of at least $p$ and  $t_0=|Z|\leq  \binom{2s}{p}p+ 2s +1-p$.
	\end{proof}

	\section{Forbidding $C_{\geq 2p-1}$}\label{section 3}
	In this section, we determine the generalized Tur\'an number of $\{C_{\geq 2p-1}, M_{s+1}\}$.
	We first construct $G_1$ and $G_2$ as follows.\\
	\textbf{Construction.} Let  \( s \geq p \geq 3 \), and $n$ be a  sufficiently large integer. Assume $s-p+1=a(p-2)+b$, where \( 0 \leq b \leq p-3 \). Define
	\begin{itemize}
		\item
		$G_1=K_1 \vee \left( K_{p-2} \vee I_{n-p+1-a(2p-3)} \cup a K_{2p-3} \right),$
		\item
		$G_2=K_1 \vee \left( K_{p-2} \vee I_{n-p-a(2p-3)-2b} \cup a K_{2p-3} \cup K_{2b+1} \right).$
	\end{itemize}
	Obviously, $G_1, G_2$ are $\{C_{\geq 2p-1}, M_{s+1}\}$-free.
	The lower bounds of $\operatorname{ex}(n, K_r, \{C_{\geq k}, M_{s+1}\} )$ are established by considering graphs $G_1$ and $G_2$. For the upper bound, let \( \mathcal{G} \) be the family of extremal graphs with the properties stated in Lemma \ref{stability}. For each $G \in \mathcal{G}$, denote by $X_G, Y_G$ and $Z_G$ the vertex sets described in Lemma \ref{stability}. In cases where there is no ambiguity, we omit the subscript $G$. In the following, we always set $X=\left\{v_1, v_2, \ldots, v_{p-1}\right\}$.
	
	Define $\Phi: \mathcal{G} \rightarrow \mathbb{R}^3$ as a map such that $\Phi(G)=\left(e(G),k_3(G),c\left(Z_G\right)+\left|Y_G\right|\right)$, where $k_3(G)$ denotes the number of $3$-cliques in $G$ and $c\left(Z_G\right)$ denotes the number of connected components in $G[Z]$. For $G, G^{\prime} \in \mathcal{G}$, we say $\Phi\left(G^{\prime}\right)>_{\text{lex}}\Phi(G)$ if $G^{\prime}$ has a larger lexicographical order than $G$. We choose the extremal graph $G\in \mathcal{G}$ such that $\Phi(G)$ is lexicographically maximal, and under this condition, $G$ has the largest maximum degree.
	
	\begin{claim}\label{find cycle}
		If there is a path \( P \) in \( G[Z \cup X] \) such that the ends of \( P \) are contained in \( X \) and \( |V(P) \cap Z| \geq |V(P) \cap X |=2\), then 	there exists a cycle of length at least \(2p-1 \) in \( G.\)
	\end{claim}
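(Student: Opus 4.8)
The plan is to build the required long cycle directly from $P$ by closing it up through the clique $X$ and, crucially, padding the return trip with vertices of the independent set $Y$. Write $P = v_i\, z_1\, z_2 \cdots z_m\, v_j$, where $v_i, v_j \in X$ are the two ends and $z_1,\dots,z_m \in Z$ with $m = |V(P)\cap Z| \geq 2$. The naive attempt is to return from $v_j$ to $v_i$ directly inside $X$, visiting the remaining $p-3$ vertices of the clique; this yields a cycle of length only $m+(p-1)$, which can be as small as $p+1 < 2p-1$. So the decisive step is to insert additional vertices along the return path, and the abundant class $Y$ is exactly what supplies them.

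Recall from Lemma~\ref{stability} that every vertex of $Y$ is adjacent to all of $X$, and that for $n$ sufficiently large $|Y| = n - t_0 - p + 1 \geq p-2$. Label the $p-3$ vertices of $X\setminus\{v_i,v_j\}$ as $u_1,\dots,u_{p-3}$ and choose distinct vertices $y_1,\dots,y_{p-2}\in Y$. I would then form
\[
C = v_i\, z_1\, z_2 \cdots z_m\, v_j\, y_1\, u_1\, y_2\, u_2 \cdots y_{p-3}\, u_{p-3}\, y_{p-2}\, v_i .
\]
Every consecutive pair is an edge of $G$: the internal steps $z_t z_{t+1}$ together with $v_i z_1$ and $z_m v_j$ come from $P$, while each step incident to some $y_t$ is present because $y_t$ is joined to all of $X$. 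As the $z_t$, the $u_t$, the $y_t$, and $v_i,v_j$ are pairwise distinct (the sets $X,Y,Z$ being disjoint and $P$ meeting $Y$ not at all), $C$ is a genuine cycle.

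Counting its vertices, $C$ uses all $p-1$ vertices of $X$, the $m$ vertices $z_1,\dots,z_m$, and the $p-2$ vertices $y_1,\dots,y_{p-2}$, for total length $(p-1)+m+(p-2) = 2p-3+m \geq 2p-1$, where the final inequality is precisely the hypothesis $m\geq 2$. The only point needing a separate check is the boundary case $p=3$, where $X\setminus\{v_i,v_j\}=\emptyset$ and the return collapses to $v_j\, y_1\, v_i$; the same count gives length $m+3 \geq 5 = 2p-1$. I expect the only real obstacle to be conceptual rather than technical: resisting the temptation to invoke the degree-$\geq p$ condition on $Z$ (which plays no role here) and instead observing that the large independent set $Y$, completely joined to $X$, delivers exactly the $p-2$ extra vertices needed to push the length from $m+p-1$ up to $2p-1$.
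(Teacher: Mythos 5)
Your proof is correct and follows essentially the same route as the paper: the paper also closes up $P$ by concatenating it with a path on $2p-3$ vertices from one end of $P$ to the other that alternates between $X$ and $Y$ (using that every vertex of $Y$ is joined to all of $X$), giving a cycle of length at least $|V(P)| + (2p-3) - 2 \geq 2p-1$. Your version merely makes the alternating path $v_j\,y_1\,u_1\,y_2\cdots u_{p-3}\,y_{p-2}\,v_i$ explicit and checks the $p=3$ boundary case, which the paper leaves implicit.
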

	
	\begin{proof}
		Assume  the ends of $P$ are $v_1$ and $v_2$. Since $|V(P) \cap Z| \geq |V(P) \cap X |=2$, we have $|V(P)|\ge 4$. As $G[X,Y]$ is a complete bipartite graph, there exists a path $P^{\prime}$ from $v_1$ to $v_2$ in $G[X,Y]$ such that $|V(P')|=2p-3$. Clearly, the concatenation of \( P \) and \( P^{\prime} \) forms a cycle of length at least \( 2p-1 \). This completes the proof.
	\end{proof}
	
	It follows from Lemma \ref{stability} (3) that each connected component in \( G[Z] \) contains at least two vertices.  Let $H$ be a connected component in $G[Z]$, we will determine the structure of \( H\).
	\begin{claim}\label{base}
		Let $H$ be a connected component of $G[Z]$. Then the following statements hold:
		\begin{itemize}
			\item[\rm (1)] $|N_{G[X]}(V(H))|=1$;
			\item[\rm (2)] $H$ is a strict block-cut tree.
		\end{itemize}
	\end{claim}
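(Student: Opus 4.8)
Since $e(G)$ is the first coordinate of $\Phi$, the first thing I would record is that $G$ is \emph{edge-maximal} with respect to edges inside $X\cup Z$: adding any such non-edge preserves properties (1)--(3) of Lemma \ref{stability}, so if it created neither a $C_{\geq 2p-1}$ nor an $M_{s+1}$ it would give a member of $\mathcal{G}$ with the same number of $K_r$'s but strictly more edges, contradicting the choice of $G$. For the inequality $|N_{G[X]}(V(H))|\le 1$ I would then argue as follows. Suppose two distinct vertices $v_i,v_j\in X$ each have a neighbour in $H$. If these neighbours can be taken to be \emph{distinct} vertices $z_i,z_j\in V(H)$, then joining the edges $v_iz_i$ and $z_jv_j$ by a path of the connected graph $H$ produces a path in $G[Z\cup X]$ whose ends are $v_i,v_j$, with exactly two vertices in $X$ and at least two in $Z$; Claim \ref{find cycle} then yields a cycle of length at least $2p-1$, a contradiction.

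The remaining possibility, by K\"onig's theorem applied to the bipartite graph between $V(H)$ and $N_{G[X]}(V(H))$, is that a single vertex $z^{\ast}\in V(H)$ is the \emph{only} vertex of $H$ adjacent to $X$ while being adjacent to at least two vertices of $X$. Because $H$ meets $X\cup Y$ only through $z^{\ast}$, any cycle that uses a vertex of $V(H)\setminus\{z^{\ast}\}$ together with a vertex of $X\cup Y$ must pass through $z^{\ast}$ twice, so no long cycle arises and Claim \ref{find cycle} gives nothing. I would therefore exclude this configuration purely through the extremal data: keeping one edge from $z^{\ast}$ to $X$ and redistributing the superfluous ones, while tracking the matching number via Lemmas \ref{star} and \ref{near-perfect matching}, one preserves feasibility and the first two coordinates of $\Phi$ but strictly increases $c(Z_G)+|Y_G|$ or, failing that, the maximum degree, contradicting the choice of $G$. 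The reverse inequality $|N_{G[X]}(V(H))|\ge 1$ I would obtain from edge-maximality: a component $H$ with no neighbour in $X$ is a component of $G$ with minimum degree at least $p$, and one may join a suitable vertex of $H$ to a vertex of $X$ by a single (bridge) edge, which creates no new cycle and, for an appropriate choice of endpoint, does not raise the matching number, again contradicting edge-maximality.

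For part (2) I would first use part (1) to bound degrees inside $H$: every vertex $z$ of $H$ has at most one neighbour in $X$, so a vertex that is not a cut-vertex of $H$ (hence whose non-$X$ neighbours all lie in a single block) has at least $p-1\ge 2$ neighbours inside that block. Applying this to a block-leaf $B$ with cut-vertex $c$, every vertex of $V(B)\setminus\{c\}$ has at least $p-1$ neighbours in $B$, forcing $|V(B)|\ge p\ge 3$; a block on at least three vertices is $2$-connected, so no block-leaf is a bridge. To promote this to \emph{every} block being $2$-connected, I would assume a bridge $z_1z_2$ and add an edge $ab$ joining a neighbour $a\ne z_2$ of $z_1$ to a neighbour $b\ne z_1$ of $z_2$ on the two sides of the bridge; this destroys the bridge, so by edge-maximality it creates a $C_{\geq 2p-1}$ or an $M_{s+1}$, and analysing the resulting cycle through $ab$ with Claim \ref{find cycle}, or the matching with Lemmas \ref{star} and \ref{near-perfect matching}, contradicts part (1) or the extremality of $G$; an induction down the block-cut tree then finishes the argument (Lemma \ref{contract} is also available to pass to a contracted model when comparing $K_r$-counts).

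The main obstacle is precisely the degenerate single-attachment configuration in part (1) and its mirror image, the internal bridges in part (2): both are locally $C_{\geq 2p-1}$-free and $M_{s+1}$-free, so Claim \ref{find cycle} cannot be invoked and one must argue entirely through the lexicographic optimality of $\Phi=(e(G),k_3(G),c(Z_G)+|Y_G|)$ and the maximum-degree tie-break, keeping $\nu(G)\le s$ under control via Lemmas \ref{star} and \ref{near-perfect matching}. Getting the bookkeeping of these local exchanges right, so that feasibility is preserved while some coordinate of $\Phi$ strictly increases, is the crux.
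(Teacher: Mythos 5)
Your easy cases agree with the paper: the subcase of (1) in which two distinct vertices of $H$ attach to two distinct vertices of $X$ is handled, exactly as in the paper, by Claim \ref{find cycle}; the subcase $|N_{G[X]}(V(H))|=0$ is handled, as in the paper, by adding a bridge to $X$; and your degree argument showing that every block-leaf of $H$ is $2$-connected is sound (though it is not how the paper proceeds). But both of the genuinely hard cases are left to hand-waving, and in both the missing idea is the same operation the paper uses throughout Sections \ref{section 3} and \ref{section 4}: \emph{contract an edge into $X$ and simultaneously add a new vertex to $Y$}. By Lemma \ref{contract} this preserves $\{C_{\geq 2p-1}, M_{s+1}\}$-freeness; the $\binom{p-1}{r-1}$ $r$-cliques created at the new $Y$-vertex pay for the at most $\binom{p-1}{r-1}$ $r$-cliques lost in the contraction, so the graph stays in $\mathcal{G}$; and $e$ and $k_3$ do not decrease while $c(Z_G)+|Y_G|$ strictly increases, so $\Phi$ goes up lexicographically.

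Concretely, in part (1), when a single vertex $z^{\ast}$ of $H$ carries all edges to $X$ and has at least two neighbours there, ``keeping one edge and redistributing the superfluous ones'' is not an operation. Deleting the extra $z^{\ast}$--$X$ edges lowers $e(G)$, the \emph{first} coordinate of $\Phi$ (and in general also $\mathcal{N}(K_r,\cdot)$, which expels the graph from $\mathcal{G}$ altogether), so no contradiction with lexicographic maximality can follow; and compensating by adding edges elsewhere fails too --- an edge inside $Y$, for instance, instantly creates a cycle of length $2p-1$ alternating between $X$ and $Y$. In part (2) the proposed mechanism is not merely incomplete but wrong: take $H$ to be two $(p+1)$-cliques joined by the bridge $z_1z_2$, with $v_1$ adjacent only to $z_1$ (for $p\geq 4$ this $G$ is feasible and consistent with Lemma \ref{stability} and with part (1)). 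Then $G+ab$, with $a,b$ on the two sides of the bridge, really does contain a cycle of length $2p+2$ running through both cliques and the bridge, so edge-maximality tells you only that $G+ab$ is infeasible --- which is true and yields no contradiction; no analysis of ``the resulting cycle through $ab$'' can contradict anything, since that cycle legitimately exists. Internal bridges cannot be excluded by edge addition; the paper excludes them by contracting the bridge (by part (1) this loses at most $2$ edges, $1$ triangle, and no larger cliques) and adding a vertex to $Y$ (gaining $p-1\geq 2$ edges and at least one triangle), a $\Phi$-increasing move inside $\mathcal{G}$. You correctly identified these two configurations as the crux, but the bookkeeping you deferred is precisely the content of the paper's proof, and the specific mechanisms you propose in its place do not close either case.
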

	\begin{proof}
		We prove (1) by contradiction. Suppose that $|N_{G[X]}(V(H))|\neq1$, then $|N_{G[X]}(V(H))|=0$ or $|N_{G[X]}(V(H))|\geq 2$. If $|N_{G[X]}(V(H))|=0$,  we can add an edge to \( E(V(H), X) \), thereby obtaining a graph \( G' \). Clearly, \( G' \) is \(\{C_{\geq 2p-1}, M_{s+1}\}\)-free and \(\mathcal{N}(K_r, G^{\prime}) \geq \mathcal{N}(K_r, G)\) holds. However, $e(G')>e(G)$ contradicts the assumption of $G$. So $|N_{G[X]}(V(H))|\geq 2$.
		
		If there exist at least two vertices \( u_1, u_2 \in V(H) \) which are connected to distinct vertices in \( G[X] \) respectively, without loss of generality, assume \( u_1 v_1, u_2 v_2 \in E(G) \).
		Since \( H \) is connected, there exists a path \( P' \) in \( H \) with ends \( u_1 \) and \( u_2 \). By adding the edges \( u_1 v_1 \) and \( u_2 v_2 \), we can extend the path \( P' \)  to a path \( P \) with ends \( v_1 \) and \( v_2 \).
		Clearly, $|V(P) \cap V(H)| \geq |V(P) \cap X| = 2.$ By Claim~\ref{find cycle},  there exists a cycle of length at least \( 2p-1 \), leading to a contradiction. So we may assume there is only one vertex (denoted by \( u \)) in \( H \) which  is adjacent to vertices in \( G[X] \).
		Without loss of generality, let $uv_1\in E(G)$. In this case, we contract the edge $uv_1$, losing at most \( \binom{p-1}{r-1} \) \( r \)-cliques. Next we add a vertex to \( Y \), obtaining  \( \binom{p-1}{r-1} \) \( r \)-cliques. Let the resulting graph  be \( G' \).  By Lemma \ref{contract}, \( G' \) is \( \{C_{\geq 2p-1}, M_{s+1}\} \)-free, and \( \mathcal{N}(K_r, G') \geq \mathcal{N}(K_r, G) \). It is easily  checked that  \( \Phi(G') >_{\text{lex}} \Phi(G) \), leading to a contradiction.

		To show statement (2), it is sufficient to prove that there is no cut-edge in $H$. If there exists a cut-edge \( e\) in \( H \). We contract the edge \( e \), and add a vertex to \( Y \) to obtain the resultant graph \( G' \). By Lemma \ref{contract}, \( G' \) is also \(\{C_{\geq 2p-1}, M_{s+1}\}\)-free.
		Statement (1) implies that contracting the cut-edge \( e \) removes at most 2 edges and 1 triangle, without affecting any larger cliques. By adding a new vertex to \( Y \), we add at least \( p-1 \geq 2 \) edges and at least 1 triangle, which implies that \( \mathcal{N}(K_r, G') \geq \mathcal{N}(K_r, G) \). It is easily  checked that  \( \Phi(G') >_{\text{lex}} \Phi(G) \), leading to a contradiction.
	\end{proof}
	
	By Claim~\ref{base} (1), we may suppose that \( v_1 \) is the unique vertex in \( X \) which is adjacent to vertices in \( H .\)  Let $B_1, B_2, \ldots, B_\ell$ be the   blocks of $H$.
	\begin{claim}\label{block size}
		For any $i \in [\ell]$, the following statements are true.
		\begin{itemize}
			\item [\rm (1)]$d_H\left(v_1\right) \geq p$;
			\item [\rm (2)]For any vertex $u \in V\left(B_i\right)$,  $d_{B_i}(u) \geq p-1$;
			\item [\rm (3)]For any vertex \( u \in V(B_i)\setminus N_G(v_1) \),  \( d_{B_i}(u) \geq p \).
		\end{itemize}
	\end{claim}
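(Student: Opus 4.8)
The plan is to reduce all three bounds to a single degree estimate inside $H$ and then to promote it to the individual blocks by the same contraction-and-compensation trick already used in Claim~\ref{base}(2). First I would record the basic bound: fix $u \in V(H)$. Since $H$ is a component of $G[Z]$, Lemma~\ref{stability}(3) gives $d_G(u) \ge p$ with all neighbours of $u$ in $X \cup Z$; by Claim~\ref{base}(1) the only vertex of $X$ adjacent to $H$ is $v_1$, and the $Z$-neighbours of $u$ stay in its own component, so $N_G(u) \subseteq \{v_1\} \cup V(H)$. Hence $d_H(u) \ge p-1$, with $d_H(u) \ge p$ whenever $u \notin N_G(v_1)$. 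If $u$ is not a cut-vertex of $H$ it lies in a unique block $B_i$, so $d_{B_i}(u) = d_H(u)$ and both (2) and (3) follow immediately. The difficulty is therefore concentrated in the cut-vertices and in statement (1).

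For a cut-vertex $u \in V(B_i)$, suppose (2) fails, i.e.\ $d_{B_i}(u) \le p-2$ (the argument for (3) is identical with $p-1$ in place of $p-2$). I would pick a neighbour $w$ of $u$ in $B_i$, contract the edge $uw$, and add one new vertex to $Y$ (joined to all of $X$) so that the order stays $n$; call the result $G'$. By Lemma~\ref{contract} the contraction keeps $G'$ free of $C_{\ge 2p-1}$ and $M_{s+1}$, and adding a $Y$-vertex is harmless because the structural bound $\nu(G) \le (p-1) + \nu(G[Z])$ (valid since $Y$ is independent with neighbourhood inside $X$) is unaffected by new $Y$-vertices and is not increased by a contraction inside $Z$. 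The crucial observation for the count is that every copy of $K_r$ with $r \ge 3$ is $2$-connected and hence lies inside a single block, while for $r=2$ an edge lies in a single block as well; thus every clique destroyed by contracting $uw$ lives in $B_i$ and uses $u$ together with some of its at most $p-2$ neighbours in $B_i$. This bounds the loss in edges, triangles and $r$-cliques by quantities of the form $\binom{p-2}{\cdot}$, whereas the new $Y$-vertex contributes $p-1$ edges, $\binom{p-1}{2}$ triangles and $\binom{p-1}{r-1}$ copies of $K_r$. Comparing these yields $\mathcal{N}(K_r, G') \ge \mathcal{N}(K_r, G)$ and $\Phi(G') >_{\text{lex}} \Phi(G)$, contradicting the choice of $G$.

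For statement (1), suppose $d_H(v_1) \le p-1$. I would run the same scheme on an edge $v_1u$ with $u \in N_H(v_1)$: contract $v_1u$ (the merged vertex keeps $G[X]=K_{p-1}$, since $u$ has no neighbour in $X\setminus\{v_1\}$) and add one $Y$-vertex. Here the cliques lost are exactly those through both $v_1$ and $u$, whose remaining vertices lie in $N_H(v_1) \cap N_H(u)$, a set of size at most $d_H(v_1) - 1 \le p-2$; the same $\binom{p-2}{\cdot}$ versus $\binom{p-1}{\cdot}$ comparison again forces $\mathcal{N}(K_r, G') \ge \mathcal{N}(K_r, G)$ together with $\Phi(G') >_{\text{lex}} \Phi(G)$, a contradiction.

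The main obstacle I expect is the simultaneous accounting in the two contraction steps: one must check both that the number of $K_r$ does not drop and that $\Phi$ strictly increases in the lexicographic order $(e,\,k_3,\,c(Z)+|Y|)$, which means the edge and triangle losses must be controlled tightly (not merely the $r$-clique loss), and if those happen to tie one must verify that the third coordinate $c(Z)+|Y|$ rises after the merge. The clean fact making this tractable is that cliques are confined to blocks, so a small block-degree at $u$ (resp.\ a small common neighbourhood of $v_1$ and $u$) genuinely limits every quantity being compared. The secondary, more routine obstacle is confirming $M_{s+1}$- and $C_{\ge 2p-1}$-freeness after inserting the $Y$-vertex, which is settled once and for all by the bound $\nu(G)\le (p-1)+\nu(G[Z])$ and by the fact that $K_{p-1}\vee I_m$ has circumference $2p-2$.
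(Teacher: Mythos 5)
Your strategy coincides with the paper's: reduce (2) and (3) to cut-vertices via the degree bound $d_H(u)\ge p-1$ (and $d_H(u)\ge p$ for $u\notin N_G(v_1)$) coming from Lemma \ref{stability}(3) and Claim \ref{base}(1), then handle cut-vertices and $v_1$ by contracting an edge inside the offending block, adding a new vertex to $Y$, and contradicting the lexicographic maximality of $\Phi$. Your clique accounting (all destroyed cliques are confined to the block $B_i$, hence controlled by $d_{B_i}(u)$) matches the paper's bound $\binom{p-2}{r-1}+\binom{p-2}{r-2}=\binom{p-1}{r-1}$, and you are in fact more explicit than the paper about why $\nu$ does not increase when the $Y$-vertex is inserted.

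There is, however, one step the paper needs that you omit, and without it your final contradiction does not close. The map $\Phi$ is defined, and maximized, only over the family $\mathcal{G}$ of extremal graphs admitting a partition $X\cup Y\cup Z$ as in Lemma \ref{stability}; in particular every vertex of $Z$ must have degree at least $p$, and every vertex of $Y$ must have neighborhood exactly $X$. After contracting $uw$ (or $v_1u$), every vertex of $Z$ that was adjacent to \emph{both} endpoints loses a neighbor, so a $Z$-vertex of degree exactly $p$ drops to degree $p-1$ while its neighborhood need not be $X$. Hence your $G'$ need not lie in $\mathcal{G}$, and "contradicting the choice of $G$" is not yet available: in the tie case $\mathcal{N}(K_r,G')=\mathcal{N}(K_r,G)$ (which does occur in statements (1) and (3), where loss and gain can both equal $\binom{p-1}{r-1}$) you have produced an extremal graph outside the family, which contradicts nothing. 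The paper closes this with a repair loop: as long as some vertex has degree less than $p$, replace its entire neighborhood by $X$; the cloning argument from Lemma \ref{stability} shows this preserves $\{C_{\geq 2p-1},M_{s+1}\}$-freeness and does not decrease the number of $r$-cliques, edges, triangles, or $c(Z_G)+|Y_G|$, and only the repaired graph $G''\in\mathcal{G}$ is compared with $G$. Two smaller points: your justification of $C_{\geq 2p-1}$-freeness after inserting the new $Y$-vertex ("$K_{p-1}\vee I_m$ has circumference $2p-2$") is insufficient, since a long cycle through the new vertex may pass through $Z$; the correct argument is that the new vertex is a twin of the many existing $Y$-vertices, so such a cycle could be rerouted through an unused one, contradicting Lemma \ref{contract}. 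Likewise, your matching bound $\nu(G)\le (p-1)+\nu(G[Z])$ must be complemented by the (easy, since $Y$ is large) equality $\nu(G)=(p-1)+\nu(G[Z])$ to deduce $\nu(G[Z])\le s-p+1$ and hence $\nu(G')\le s$.
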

	\begin{proof}
		We prove all statements by contradiction.
		For statement (1), suppose to the contrary that $ d_H(v_1) \leq p-1$,  we contract an edge $e_1\in E(\{v_1\},V(H))$, losing at most  \( \binom{p-1}{r-1} \) $r$-cliques. Then, we add a vertex to $Y$, obtaining  \( \binom{p-1}{r-1} \) $r$-cliques. Let the resulting graph be $G'$. Clearly, we have  \( \mathcal{N}(K_r, G') \geq \mathcal{N}(K_r, G) \). By Lemma \ref{contract},  $G'$ is also  $\{C_{\geq 2p-1}, M_{s+1}\}$-free.
		If there exists a vertex in $H$  whose  degree is less than \( p \) after contracting edge $e_1$, we replace its neighborhood with \( X \). Note that this process does not decrease the number of \( r \)-cliques. We repeatedly apply this process, which will eventually terminate. Let the final graph be \( G'' \). Through this process, \( G'' \) remains \( \{C_{\geq 2p-1}, M_{s+1}\} \)-free, and we have \( \mathcal{N}(K_r, G'') \geq \mathcal{N}(K_r, G') \geq \mathcal{N}(K_r, G) \) and \( \Phi_1(G'') >_{\text{lex}} \Phi_1(G) \), which contradicts the assumption of \( G \).
		
		For the second statement, suppose there exist  an integer $i_0\in [\ell]$ and  a vertex \( u \in V(B_{i_0})  \)  such that \( d_{B_{i_0}}(u) \leq p-2 \). By Lemma \ref{stability} (3) and Claim \ref{base} (1), \( u \) must be a cut-vertex in \( H \).
		We contract an edge $e_2\in E(\{u\}, V(B_{i_0}))$, losing at most \( \binom{p-2}{r-1} + \binom{p-2}{r-2} \) \( r \)-cliques. Then, we add a vertex to \( Y \), thereby adding at least \( \binom{p-1}{r-1} \) \( r \)-cliques. Let \( G' \) be the resulting graph. It follows from \( \binom{p-1}{r-1}=\binom{p-2}{r-1} + \binom{p-2}{r-2} \) that \( \mathcal{N}(K_r, G') = \mathcal{N}(K_r, G) \). By Lemma \ref{contract},  \( G'\) is \( \{C_{\geq 2p-1}, M_{s+1}\} \)-free.
		If there exists a vertex in $H$ whose  degree is less than \( p \) after contracting $e_2$, we replace its neighborhood with \( X \). Repeat this process until it terminates. Let \( G'' \) be the final graph. Clearly, \( G'' \) remains \( \{C_{\geq 2p-1}, M_{s+1}\} \)-free and \( \mathcal{N}(K_r, G'') \geq \mathcal{N}(K_r, G') \geq \mathcal{N}(K_r, G) \). Then we have   \( \Phi(G'') >_{\text{lex}} \Phi(G) \), a contradiction.
		
		For the last statement, suppose there exists an integer $i_0\in [\ell]$ and  a vertex \( u \in V(B_{i_0})\setminus N_G(v_1)  \)  such that  \( d_{B_{i_0}}(u) \leq p-1 \). Clearly, \( u \) is a cut-vertex in \( H \).
		We contract an edge $e_2\in E(\{u\}, V(B_{i_0}))$, losing at most \( \binom{p-1}{r-1} \) \( r \)-cliques. Then we add a vertex to \( Y \), adding at least \( \binom{p-1}{r-1} \) \( r \)-cliques. Let \( G' \) be the resulting graph. Obviously, \( G'\) is \( \{C_{\geq 2p-1}, M_{s+1}\} \)-free and  \( \mathcal{N}(K_r, G') \geq \mathcal{N}(K_r, G) \).
		If there is a vertex in \( G' \) whose degree is less than \( p \) after contracting $e_2$, we replace its neighborhood with \( X \). Repeat this process until it terminates, Let the final graph be \( G'' \). Then  \( G'' \) remains \( \{C_{\geq 2p-1}, M_{s+1}\} \)-free and \( \mathcal{N}(K_r, G'') \geq \mathcal{N}(K_r, G') \geq \mathcal{N}(K_r, G) \). One can check that \( \Phi(G'') >_{\text{lex}} \Phi(G) \), a contradiction.
	\end{proof}

	\begin{claim}\label{core}
		For any $1\leq i\neq j\leq \ell$, there are no vertices $u\in V(B_i)\setminus V(B_j)$ and  $v\in V(B_j)\setminus V(B_i)$ such that both $u$ and $v$ are adjacent to $v_1$.
	\end{claim}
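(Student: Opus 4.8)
The plan is to argue by contradiction: assuming vertices $u\in V(B_i)\setminus V(B_j)$ and $v\in V(B_j)\setminus V(B_i)$ with $uv_1,vv_1\in E(G)$ exist, I will exhibit a cycle of length at least $2p-1$ in $G$, contradicting that $G$ is $C_{\ge 2p-1}$-free. The underlying mechanism is the same as in Claim~\ref{find cycle}: a sufficiently long path whose ends can be joined up yields a long cycle. The difference is that here the two ends cannot both lie in $X$ (by Claim~\ref{base}(1) only $v_1$ meets $H$), so rather than routing the closing arc through $Y$, I route it through the single vertex $v_1$ and extract all the length from inside $H$.

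First I would produce a long $u$--$v$ path $Q$ lying entirely in $H$. Since $v_1\in X$ while $V(H)\subseteq Z$, we have $v_1\notin V(Q)$, so $v_1\,u\,Q\,v\,v_1$ is a genuine cycle of length $|E(Q)|+2$; it therefore suffices to guarantee $|E(Q)|\ge 2p-3$. To build $Q$, take in the block--cut tree of $H$ the path of blocks $B_i=D_0,D_1,\dots,D_m=B_j$ joining $B_i$ to $B_j$, with cut-vertices $c_t=V(D_{t-1})\cap V(D_t)$ for $t\in[m]$. By Claim~\ref{base}(2) every $D_t$ is $2$-connected, and by Claim~\ref{block size}(2) each has minimum degree at least $p-1$ inside the block, so $|V(D_t)|\ge p$. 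I then pick inside each block a path between its two contact vertices --- from $u$ to $c_1$ in $D_0$, from $c_t$ to $c_{t+1}$ in a middle block, and from $c_m$ to $v$ in $D_m$ --- and concatenate them; as consecutive blocks meet only in the cut-vertices, the concatenation is a simple $u$--$v$ path $Q$.

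The heart of the argument, and the step I expect to be the main obstacle, is the length estimate for these intra-block paths. What is needed is the following property of a $2$-connected graph $D$ with $\delta(D)\ge p-1$: any two vertices of $D$ are joined by a path of length at least $p-1$. Granting this for the end-blocks $D_0,D_m$ already gives $|E(Q)|\ge 2(p-1)=2p-2\ge 2p-3$. (If $u=c_1$ or $v=c_m$ the corresponding end-block contributes nothing, but then $m\ge 2$, since $u=c_1\in V(B_j)$ would contradict $u\notin V(B_j)$ when $m=1$; the remaining $m\ge 2$ segments, each of length $\ge p-1$, still yield $|E(Q)|\ge 2(p-1)$.) I would establish the path property by the usual dichotomy on $|V(D)|$: when $|V(D)|\ge 2p-2$, any two vertices lie on a common cycle of length $\ge\min\{|V(D)|,2(p-1)\}=2(p-1)$, whose longer arc is a path of length $\ge p-1$; when $|V(D)|<2p-2$ one has $\delta(D)\ge p-1>|V(D)|/2$, so $D$ is Hamiltonian-connected and any two vertices are joined by a Hamiltonian path, of length $|V(D)|-1\ge p-1$. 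Both facts are classical consequences of Dirac-type circumference bounds in the spirit of Lemmas~\ref{Dirac} and~\ref{Kopylov}; controlling the path length between \emph{prescribed} endpoints is precisely the delicate point.

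Finally, with $|E(Q)|\ge 2p-3$ in hand, the cycle $v_1\,u\,Q\,v\,v_1$ has length at least $2p-1$, contradicting the $\{C_{\ge 2p-1},M_{s+1}\}$-freeness of $G$, so no such pair $u,v$ exists. As an alternative closing aligned more directly with the paper's toolkit, one could form the $2$-connected graph $B^{*}$ obtained from $D_0,\dots,D_m$ together with $v_1$ and the edges $v_1u,v_1v$, take a longest path $P$ in $B^{*}$, and apply Lemma~\ref{Kopylov}: since $|V(B^{*})|\ge 2p$ and every end of $P$ distinct from $v_1$ has degree $\ge p-1$ (Claim~\ref{block size}(2)), with degree $\ge p$ if it is not a neighbour of $v_1$ (Claim~\ref{block size}(3)), Kopylov yields a cycle of length $\ge 2p-1$. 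The only care required on this route is the borderline situation in which both ends of $P$ are neighbours of $v_1$ of degree exactly $p-1$; there one must either close $P$ directly through $v_1$ or reroute $P$ off $v_1$, and handling this case cleanly is where the subtlety of the second approach lies.
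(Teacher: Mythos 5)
Your strategy is the same in spirit as the paper's (derive a contradiction by exhibiting a cycle of length at least $2p-1$), but the implementation is genuinely different: the paper never builds a path block by block. It takes $B$ to be the maximal $2$-connected subgraph of $G[V(H)\cup\{v_1\}]$ containing $v_1$, notes that $B$ absorbs the blocks between $B_i$ and $B_j$, deduces $\delta(B)\ge p$ and $|V(B)|\ge 2p$ from Claim~\ref{block size}, and then applies Lemma~\ref{Dirac} followed by Lemma~\ref{Kopylov} to get a cycle of length at least $2p$. Your route instead hinges on an auxiliary lemma that is not in the paper's toolkit --- a path of length at least $p-1$ between two \emph{prescribed} vertices of a $2$-connected graph with minimum degree $p-1$ --- which you support via Ore-type Hamiltonian-connectedness and a cycle-through-two-specified-vertices theorem. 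Those statements are true but external; this is a cosmetic weakness compared to what follows.

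The genuine gap is in your parenthetical handling of degenerate end-segments. If \emph{both} $u=c_1$ and $v=c_m$, the number of surviving segments is $m-1$, not $m$; when $m=2$ this is a single segment, $u$ and $v$ are then the two cut-vertices of one middle block $D_1$, your path $Q$ has length only $\ge p-1$, and the closing cycle has length only $\ge p+1<2p-1$ for $p\ge 3$. This case cannot be repaired by finding a better path or cycle: take $p=3$, let $H$ be a chain $K_4$--$K_3$--$K_4$ glued at cut-vertices $c_1$ and $c_2$, and join $v_1$ to exactly the three vertices of the middle $K_3$. All conclusions of Claims~\ref{base} and~\ref{block size} hold, the hypothesis of Claim~\ref{core} is met with $u=c_1$, $v=c_2$ (taking $B_i$, $B_j$ to be the two $K_4$'s), the matching number stays bounded, yet every cycle in the graph has length at most $4<2p-1$. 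Hence in this configuration the contradiction must come from the extremality of $G$ (the maximality of $\Phi(G)$), not from $\{C_{\ge 2p-1},M_{s+1}\}$-freeness, and your proof supplies no such argument; your alternative Kopylov-based closing fails at the same point, since $B^{*}$ is not $2$-connected when $u,v$ are these cut-vertices. To be fair, the paper's own proof stumbles on exactly the same configuration --- its assertion that $B$ contains $B_1$ and $B_2$ as subgraphs is false there, since $B$ then reduces to $D_1\cup\{v_1\}$ --- so your attempt reproduces both the strength and the blind spot of the published argument; but as a proof of the claim, the case remains open.
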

	\begin{proof}
		Suppose to the contrary that there are two vertices $u\in V(B_1)\setminus V(B_2)$ and  $v\in V(B_2)\setminus V(B_1)$ such that $uv_1\in E(G)$ and $vv_1\in E(G)$. Let $B$ be the maximal $2$-connected subgraph of $G[V(H)\cup\{v_1\}]$ containing $v_1$. Then $B$ contains $B_1,B_2$ as subgraphs and $v_1\in V(B)$. Since \( B\) is maximal,  every vertex in \( V(H) \setminus V(B) \) is not adjacent to \( v_1 \).
		Combining with Claim \ref{block size}, we have $\delta(B)\geq p$ and $|V(B)|\geq 2p$. By Lemma \ref{Dirac}, there exists a longest path $P$ in $H$ of length at least $\min\{|V(B)|,2p+1 \}\geq 2p.$ Assume $u$ and $v$ are the end vertices of $P$. Since $P$ is the longest path in $B$, we have $d_P(u)\geq p$ and $d_P(v)\geq p$. Using Lemma \ref{Kopylov}, one can find a cycle of length at least $\min\{|V(P)|, 2p\}\geq 2p$ in $H$, leading to  a contradiction.
	\end{proof}
	
	Let $H'=G[V(H)\cup \{v_1\}]$.  The next claim follows immediately from Claim \ref{block size} and Claim \ref{core}.
	\begin{claim}\label{min deg}
		\( H' \) is a strict block-cut tree, and each vertex \( u \) in \( H' \) has a degree of at least \( p \) within the block  containing \( u .\)
	\end{claim}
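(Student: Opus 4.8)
The plan is to first determine the exact block structure of $H'=G[V(H)\cup\{v_1\}]$ and then read off the degree bound from Claim \ref{block size}. The key structural input is that all neighbours of $v_1$ inside $H$ lie in one block. By Claim \ref{block size}(1) we have $|N_G(v_1)\cap V(H)|=d_H(v_1)\ge p\ge 3$, so $v_1$ has at least two neighbours in $H$. I would first show that any two such neighbours $a,b$ share a block of $H$: if not, then in the block decomposition there are blocks $B_i\ni a$ and $B_j\ni b$ with $a\in V(B_i)\setminus V(B_j)$ and $b\in V(B_j)\setminus V(B_i)$, both adjacent to $v_1$, which is exactly the configuration forbidden by Claim \ref{core}. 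Upgrading "every pair of neighbours shares a block" to "all neighbours share one block" then uses the tree structure: if they did not all lie in one block one could extract three neighbours $a,b,c$ in pairwise distinct blocks $B_1,B_2,B_3$ (with $a,b\in B_1$, $b,c\in B_2$, $a,c\in B_3$), making $a,b,c$ cut-vertices that form a cycle $B_1\!-\!b\!-\!B_2\!-\!c\!-\!B_3\!-\!a\!-\!B_1$ in the block-cut tree, a contradiction. Hence all neighbours of $v_1$ lie in a single block, say $B_1$.

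With this localisation the block structure of $H'$ is forced. Since $B_1$ is $2$-connected and $v_1$ is joined to at least two of its vertices, the graph $B_1^{+}:=G[V(B_1)\cup\{v_1\}]$ is again $2$-connected; and because $v_1$ has no neighbour outside $B_1$, no cycle through $v_1$ can leave $B_1^{+}$, so the remaining blocks $B_2,\dots,B_\ell$ of $H$ and all cut-vertices of $H$ are unchanged. Thus the blocks of $H'$ are precisely $B_1^{+},B_2,\dots,B_\ell$, each $2$-connected, so $H'$ is a strict block-cut tree. (One can also see strictness directly: every edge of $H$ lies on a cycle since $H$ is strict by Claim \ref{base}(2), and every edge $v_1w$ lies on a cycle formed by a path in $H$ joining $w$ to a second neighbour of $v_1$; hence $H'$ is bridgeless and connected.)

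For the degree statement I would proceed block by block with Claim \ref{block size}. In $B_1^{+}$ the vertex $v_1$ has degree $|N_G(v_1)\cap V(H)|\ge p$ by part (1); a vertex $u\in V(B_1)\cap N_G(v_1)$ has $d_{B_1^{+}}(u)=d_{B_1}(u)+1\ge(p-1)+1=p$ by part (2); and a vertex $u\in V(B_1)\setminus N_G(v_1)$ has $d_{B_1^{+}}(u)=d_{B_1}(u)\ge p$ by part (3). For a block $B_i$ with $i\ge 2$, every $u\in V(B_i)\setminus N_G(v_1)$ satisfies $d_{B_i}(u)\ge p$ by part (3); and any $u\in V(B_i)\cap N_G(v_1)$ must lie in $B_1$ by the localisation, so it is a cut-vertex shared by $B_1$ and $B_i$ whose relevant block is $B_1^{+}$, where it already has degree at least $p$. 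This establishes the degree-$\ge p$ guarantee for every vertex in the block we assign to it.

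The main obstacle is the localisation step, since Claim \ref{core} only rules out two \emph{private} neighbours in distinct blocks; converting the resulting pairwise statement into a single common block requires the Helly/tree property of the block decomposition rather than any new extremal argument. The one subtlety to flag afterwards is the treatment of cut-vertices of $H$ that happen to be adjacent to $v_1$: such a vertex may retain degree only $p-1$ inside a block $B_i$ with $i\ge2$, which is precisely why its degree guarantee is read off in the block $B_1^{+}$ that contains it, matching the deliberate exclusion of $N_G(v_1)$ in Claim \ref{block size}(3).
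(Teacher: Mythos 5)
Your proof is correct and is essentially the paper's own argument: the paper offers no proof beyond the remark that the claim ``follows immediately from Claim \ref{block size} and Claim \ref{core}'', and your write-up is exactly that derivation with the two omitted details filled in correctly, namely the Helly-type localisation of $N_G(v_1)\cap V(H)$ into a single block of $H$ (using Claim \ref{core} plus the facts that two blocks share at most one vertex and that the block structure is a tree), and the resulting identification of the blocks of $H'$ as $B_1^{+},B_2,\dots,B_\ell$, followed by the degree bookkeeping from Claim \ref{block size}. The subtlety you flag at the end is genuine and worth recording: a cut-vertex of $H$ adjacent to $v_1$ is only guaranteed degree $p-1$ in its blocks other than $B_1^{+}$, so the claim is provable from the cited claims only under your reading (each vertex has degree at least $p$ in a designated block containing it, not in every block containing it); this weaker form still suffices for the paper's subsequent use in Claim \ref{H}, because a block $B_i'$ with $i\ge 2$ meets $N_G(v_1)$ in at most one vertex, hence the two ends of a longest path in $B_i'$ have degree sum at least $2p-1$, which is all that the Kopylov--Dirac argument there requires.
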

	
	We denote \( B'_1,B'_2, \ldots, B'_\ell\) as the blocks in the strict block-cut tree \( H' \).
	\begin{claim}\label{H}
		For any  $i \in [\ell]$, \( B_i' \) is a Hamiltonian graph with  $p+1\leq |V(B_i')|\leq 2p-2$.
	\end{claim}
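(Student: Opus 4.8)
The plan is to derive all three conclusions—the lower bound $|V(B_i')| \geq p+1$, the upper bound $|V(B_i')| \leq 2p-2$, and Hamiltonicity—from the single degree condition supplied by Claim~\ref{min deg}, namely that every vertex $u$ of $B_i'$ satisfies $d_{B_i'}(u) \geq p$. The lower bound is immediate: in a simple graph where every vertex has at least $p$ neighbours there must be at least $p+1$ vertices. For the other two conclusions I would run the two path/cycle lemmas in tandem, so that the upper bound and Hamiltonicity emerge together.

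Concretely, I would fix a longest path $P$ of $B_i'$ with ends $x$ and $y$. The key observation is that, since $P$ is longest, every neighbour of $x$ (and of $y$) already lies on $P$; hence $d_P(x) = d_{B_i'}(x) \geq p$ and likewise $d_P(y) \geq p$. This one fact feeds both lemmas. Applying Lemma~\ref{Dirac} yields
$$|V(P)| \geq \min\{|V(B_i')|,\, d_{B_i'}(x)+d_{B_i'}(y)+1\} \geq \min\{|V(B_i')|,\, 2p+1\},$$
while applying Lemma~\ref{Kopylov} (legitimate because $B_i'$ is $2$-connected, being a block of the strict block-cut tree $H'$) produces a cycle of length at least $\min\{|V(P)|,\, d_P(x)+d_P(y)\} \geq \min\{|V(P)|,\, 2p\}$.

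Next I would invoke the hypothesis that $G$, and hence its subgraph $B_i'$, is $C_{\geq 2p-1}$-free, so every cycle has length at most $2p-2$. Combined with the cycle just produced, this forces $|V(P)| \leq 2p-2$. Feeding this back into the Dirac bound rules out $|V(B_i')| \geq 2p+1$ (which would give $|V(P)| \geq 2p+1$), so $|V(B_i')| \leq 2p$, and then the Dirac bound collapses to $|V(P)| \geq |V(B_i')|$. Thus $P$ is a Hamiltonian path with $|V(B_i')| = |V(P)| \leq 2p-2$, which is the upper bound. Finally, since $|V(B_i')| \leq 2p-2 < 2p$, the Kopylov cycle has length at least $\min\{|V(P)|,\, 2p\} = |V(B_i')|$, so it is spanning and $B_i'$ is Hamiltonian.

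The step I expect to require the most care is the bookkeeping with the two nested minima. One must first extract $|V(P)| \leq 2p-2$ from the forbidden-cycle condition, then use it to collapse the Dirac minimum to $|V(B_i')|$ so that the longest path becomes Hamiltonian, and only afterwards read off that the Kopylov cycle is spanning. Getting this logical ordering right is what lets the three conclusions fall out simultaneously rather than circularly; the underlying graph theory is otherwise a direct combination of Lemma~\ref{Dirac} and Lemma~\ref{Kopylov} with the $\delta \geq p$ hypothesis.
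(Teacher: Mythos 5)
Your proposal is correct and follows essentially the same route as the paper: lower bound from the minimum degree condition of Claim~\ref{min deg}, then a longest path whose ends have all neighbours on it, with Lemma~\ref{Kopylov} and Lemma~\ref{Dirac} combined with $C_{\geq 2p-1}$-freeness to force the path to be Hamiltonian and the Kopylov cycle to be spanning. The only difference is presentational — you unpack explicitly the collapse of the two minima that the paper compresses into a single chain of inequalities — which is a faithful (and arguably clearer) rendering of the same argument.
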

	\begin{proof} By Claim \ref{min deg}, $|V(B_i')|\geq p+1$ for any  $i \in [\ell]$.
		We choose the longest path \( P\) in \( B_i'\), and let $u,v$ be two ends of $P.$ Since $P$ is the longest path in $B_i'$,  $d_{P}(u)\geq p$ and $d_{P}(v)\geq p$.
		By Lemma \ref{Kopylov}, \( B_i'\) contains a cycle $C$ with $|V(C)|\geq \min \left\{|V(P)|, 2p \right\}= |V(P)|$. Applying Lemma \ref{Dirac} to the longest path $P$,  we deduce that
		$$
		|V(B_i')| \geq |V(C)|\geq |V(P)| \geq \min \{|V(B_i')|, 2p+1\} = |V(B_i')|,
		$$
		which implies that \( B_i' \) contains a Hamiltonian cycle $C$. Moreover, we have $|V(B_i')|\leq 2p-2$ as \( B_i' \) is $C_{\geq 2p-1}$-free.
	\end{proof}
	
	\begin{claim}\label{clique}
		$H'$ is a clique with  $p+1\leq |V(H')|\leq 2p-2$.
	\end{claim}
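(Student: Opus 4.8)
The plan is to prove the claim in two stages: first that every block of $H'$ is a clique, and then that $H'$ has only one block. Throughout I exploit the lexicographic maximality of $\Phi(G)=(e(G),k_3(G),c(Z_G)+|Y_G|)$, so I keep in mind the decomposition
$$\mathcal N(K_r,G)=\binom{p-1}{r}+|Y|\binom{p-1}{r-1}+\sum_{B}\mathcal N(K_r,B),$$
and the analogous identities for $e(G)$ and $k_3(G)$, where the sums run over all blocks $B$ of all components of $G[Z]$. These hold for $r\ge2$ because, by Lemma \ref{stability}(3) and Claim \ref{base}(1), any copy of $K_r$ meeting $Z$ lies inside a single block (two blocks meet in at most one vertex, and $Y$ is not adjacent to $Z$).

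\emph{Stage 1: each block is complete.} Suppose some block $B_i'$ is not a clique and add a missing edge inside it. By Claim \ref{H} we have $|V(B_i')|\le 2p-2$, and since $v_1$ is the only vertex joining the component to the rest of $G$, every cycle meeting $V(H)\setminus\{v_1\}$ stays inside one block; hence the new graph is still $C_{\ge 2p-1}$-free. As $B_i'$ is Hamiltonian, adding edges inside it leaves $\nu(B_i')=\lfloor |V(B_i')|/2\rfloor$ unchanged, so the graph remains $M_{s+1}$-free. The operation does not decrease $\mathcal N(K_r,G)$ but strictly increases $e(G)$, contradicting the choice of $G$. Thus every block is a clique $K_{m_i}$ with $p+1\le m_i\le 2p-2$.

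\emph{Stage 2: $H'$ is a single block.} Suppose the component's graph $H'$ has blocks $K_{m_1},\dots,K_{m_\ell}$ with $\ell\ge2$, where $v_1\in K_{m_1}$. I would rearrange these $\ell$ cliques into the block-cut star $H'^{*}$ centered at $v_1$, keeping the rest of $G$ fixed, and call the result $G'$. Since the multiset of block sizes and the value of $|Y|$ are unchanged, the quantities $\mathcal N(K_r,G)$, $e(G)$ and $k_3(G)$ are all preserved, so $G'$ is still extremal and agrees with $G$ in the first two coordinates of $\Phi$. However, deleting $v_1$ from $H'^{*}$ splits it into the $\ell$ disjoint cliques $K_{m_i}\setminus v_1$, so the single $G[Z]$-component $H$ is replaced by $\ell$ components; hence $c(Z_{G'})=c(Z_G)+(\ell-1)$ with $|Y|$ unchanged, and the third coordinate of $\Phi$ strictly increases, giving $\Phi(G')>_{\mathrm{lex}}\Phi(G)$. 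This is the desired contradiction, \emph{provided $G'$ is feasible}, so $\ell=1$ and $H'=K_{m_1}$ is a clique; the size bounds then follow from Claim \ref{H}.

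The main obstacle is verifying that $G'$ is $\{C_{\ge 2p-1},M_{s+1}\}$-free. Long cycles are excluded at once, since each block is a clique of order $\le 2p-2$ and $v_1$ is a cut vertex separating the blocks. For the matching I would invoke Lemma \ref{star}, which gives $\nu(H'^{*})\le\nu(H')$. As $Y$ is huge, a maximum matching of $G$ (resp.\ $G'$) matches all of $X$ into $Y$ and then matches each $G[Z]$-component internally, so it suffices to compare the $Z$-contributions $\sum_{i}\nu(K_{m_i-1})=\sum_i\lfloor (m_i-1)/2\rfloor$ for $G'$ with $\nu(H)$ for $G$. Viewing $H$ as the block-cut tree of the cliques $K_{m_1-1},K_{m_2},\dots,K_{m_\ell}$ and using the deficiency bound from the proof of Lemma \ref{star} (at most $q-1$ unmatched vertices, $q$ being the number of even-order blocks), a short parity computation — hinging on the fact that the special block $K_{m_1-1}$ flips the parity of $m_1$ — yields $\nu(H)\ge\sum_i\lfloor (m_i-1)/2\rfloor$, whence $\nu(G')\le\nu(G)\le s$. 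This parity bookkeeping is the only delicate point; the rest is routine.
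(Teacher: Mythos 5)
Your Stage 2 is sound and is essentially the paper's own argument: the paper likewise rearranges the blocks of $H'$ into a block-cut star centered at $v_1$, gets $C_{\geq 2p-1}$-freeness from the bounded block orders, gets $M_{s+1}$-freeness from Lemmas \ref{near-perfect matching} and \ref{star}, and derives the contradiction from the third coordinate of $\Phi$. Your parity bookkeeping (comparing the deficiency of $H$ with the number of even $m_i$, split according to the parity of $m_1$) does check out. The problem is Stage 1, which the paper deliberately performs \emph{after} the single-block step, not before.

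The inference in Stage 1 --- ``adding edges inside $B_i'$ leaves $\nu(B_i')$ unchanged, so the graph remains $M_{s+1}$-free'' --- is invalid: the matching number of a graph is not controlled block by block, and a chord added inside a Hamiltonian block can raise $\nu$ of the whole graph even though $\nu$ of that block stays the same. This can happen under every hypothesis available to you at this point. Take $p=4$, let $B_i'=K_{2,2,2}$ with parts $\{x,y\},\{a,b\},\{u,w\}$, attach a $K_6$ block at each of $a,b,u$, and let $B_1'=K_5$ (the block containing $v_1$) be attached at $w$. All blocks of $H'$ are Hamiltonian of order $5$ or $6=2p-2$ with minimum degree at least $p$, so Claims \ref{base}--\ref{H} hold. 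But every block of $H=H'-v_1$ is then even ($K_4$, $K_{2,2,2}$, three copies of $K_6$), and matching each of them internally yields a maximum matching of $H$ (of size $11$, optimal by Tutte--Berge with $S=\{a,b,u,w\}$) that misses both $x$ and $y$; hence $\nu(H+xy)=\nu(H)+1$, and if $\nu(G)=s$ your edge addition creates an $M_{s+1}$. So Stage 1 fails precisely on configurations of the kind the claim is meant to exclude. The paper avoids this trap by first proving $H'$ is a single block (your Stage 2 run with Hamiltonian rather than complete blocks --- only the facts $\nu(B)=\lfloor |V(B)|/2\rfloor$ and $\nu(B-v)=\lfloor (|V(B)|-1)/2\rfloor$ are needed, and these hold for Hamiltonian blocks); once $H'$ is one block, $H=H'-v_1$ satisfies Dirac's condition, so $\nu(H)=\lfloor |V(H)|/2\rfloor$ is already the maximum possible and completing $H'$ to a clique cannot increase the matching number. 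Reordering your two stages in this way repairs the proof and essentially reproduces the paper's.
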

	\begin{proof}
		We first prove that  $H'$ is a block. If $H'$ is not a block, then there are at least two blocks in \( H' \). We transform \(H' \) into a block-cut star \( S'\) with \( v_1 \) as its center vertex, where \( S' \) has the same blocks as \( H' \). Let \( G' \) be the resultant graph. Clearly, \( G' \) is \( C_{\geq 2p-1} \)-free. We now show that \( G' \) is also \( M_{s+1} \)-free. This can be deduced by considering two cases.
		
		Case 1. If all blocks in $H'$ are odd. By Lemma \ref{near-perfect matching}, one can find maximum matchings in \( H'\) and \( S' \), both of which are near-perfect matchings that exclude the vertex \( v_1 \). It follows that
		$$
		\nu(G) = \nu(H') + \nu(G \setminus H') = \nu(S') + \nu(G' \setminus S') = \nu(G').
		$$
		
		Case 2. If there exists an even block in \( H' \), then \( S' \)  contains an even Hamilton graph.
		Clearly, the center vertex $v_1$ must be contained in every maximum matching of $S'$ and
		also in every maximum matching of $G'\setminus S'$. It follows that
		$$
		\nu(G') = \nu(S') + \nu(G' \setminus S') - 1.
		$$
		By Lemma \ref{star}, we get
		\begin{align*}
			\nu(G) \geq \nu(H') + \nu(G \setminus H') - 1 \geq \nu(S') + \nu(G \setminus H') - 1 = \nu(S') + \nu(G' \setminus S') - 1 = \nu(G').
		\end{align*}
		So \( G' \) is \( M_{s+1} \)-free. It is clear that \( \mathcal{N}(K_r, G') = \mathcal{N}(K_r, G) \).  Note that we split a connected component \(H \) into at least two connected components in $G[Z]$.  Then \( \Phi(G') >_{\text{lex}} \Phi(G) \),  a contradiction. Thus $H'$ is a block.
		
		By Claim \ref{H},  we have $p+1\leq |V(H')|\leq 2p-2$. Moreover, one can add edges to \( H' \) to make it to be a complete graph while keeping \( \{C_{\geq 2p - 1}, M_{s+1}\} \)-free. Recall that \( G \) is an extremal graph with the maximum number of edges, so \( H' \) must be a clique.
	\end{proof}
	
	Recall that  $\Phi(G)$ is lexicographically maximal,  and \( G \) has the largest maximum degree in \( \mathcal{G} \). Then   all blocks in $G[Z]$ intersect at one vertex. In the following, we assume that the  vertex is $v_1$.
	
	\begin{claim}\label{structure}
		There is at most one block with order less than \( 2p - 3 \) in  $G[Z].$
	\end{claim}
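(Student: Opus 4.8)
The plan is to derive a contradiction from the lexicographic maximality of $\Phi(G)$. Recall from Claim \ref{clique} and the remark just before this claim that $G[Z\cup\{v_1\}]$ is a block-cut star centered at $v_1$ whose blocks $B_1,\dots,B_\ell$ are cliques with $p+1\le |V(B_i)|\le 2p-2$; equivalently, each component of $G[Z]$ is a clique (a \emph{petal}) $P_i=B_i\setminus\{v_1\}$ of order $|V(B_i)|-1\in[p,2p-3]$, and such a component has order less than $2p-3$ exactly when $|V(B_i)|\le 2p-3$. Suppose, for contradiction, that two blocks, say $B_1$ and $B_2$, satisfy $a_1:=|V(B_1)|\le 2p-3$ and $a_2:=|V(B_2)|\le 2p-3$, and assume $a_1\ge a_2$. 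I would build a graph $G'$ from $G$ by moving a carefully chosen number $T$ of petal vertices of $B_2$ into $B_1$ (making each moved vertex adjacent to all of $B_1$ and deleting its edges inside $P_2$, while keeping its edge to $v_1$). This replaces the pair of orders $(a_1,a_2)$ by $(a_1+T,a_2-T)$, leaves the rest of $G$ untouched, and I will arrange that $G'$ is still $\{C_{\ge 2p-1},M_{s+1}\}$-free with $\mathcal N(K_r,G')\ge\mathcal N(K_r,G)$ and $e(G')>e(G)$; this yields $\Phi(G')>_{\mathrm{lex}}\Phi(G)$ (or strictly more $K_r$'s than $\operatorname{ex}$), a contradiction.

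The crucial preliminary is a matching formula. Since $|Y|$ is huge, a maximum matching of $G$ can match all of $X=K_{p-1}$ into $Y$ and match each petal internally, giving
\[
\nu(G)=(p-1)+\sum_{i=1}^{\ell}\Big\lfloor\tfrac{|V(B_i)|-1}{2}\Big\rfloor=(p-1)+\frac{|Z|-q}{2},
\]
where $q$ is the number of blocks of even order. Thus any modification that keeps $|Z|$ fixed and does not decrease $q$ cannot increase $\nu$, so $G'$ stays $M_{s+1}$-free. This reduces the whole matter to a parity bookkeeping on the number of even blocks. Concretely, both of $a_1,a_2$ flip parity precisely when $T$ is odd, so I take $T=2$ when $a_1,a_2$ have the same parity (both orders are preserved and $q'=q$) and $T=1$ when they have opposite parity (one block becomes even, the other odd, so again $q'=q$); in the sub-case where both are odd, $T=1$ gives $q'=q+2$. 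In every case $q'\ge q$, hence $\nu(G')\le\nu(G)\le s$.

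For the remaining two requirements: $G'$ is still a block-cut star attached to the unchanged $K_{p-1}\vee I$ part with every block of order at most $2p-2$ (the growing block reaches order at most $a_1+2\le 2p-2$, valid because then $a_1$ is even and $\le 2p-4$, or at most $a_1+1\le 2p-2$), and a cycle through $v_1$ can visit only one block; so the very argument that established $C_{\ge 2p-1}$-freeness of $G$ applies to $G'$. For the count, each moved vertex loses $a_2-2$ petal-edges and gains $a_1-1$ petal-edges, so $e(G')-e(G)$ is a positive multiple of $(a_1-a_2+1)>0$, while $\mathcal N(K_r,\cdot)$ does not decrease by Lemma \ref{ch} applied to $(a_1,a_2)\mapsto(a_1+T,a_2-T)$ (the larger size strictly increases). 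This produces the promised contradiction.

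The step I expect to be the main obstacle is the matching control, i.e.\ the parity bookkeeping: a careless single-vertex transfer between two \emph{even} blocks fuses two odd petals in a way that lowers $q$ and raises $\nu$ by one, which could create an $M_{s+1}$ and invalidate $G'$; the parity-driven choice of $T$ is exactly what rules this out. A secondary point needing care is the boundary case where $B_2$ already has the minimum order $p+1$ and so cannot be shrunk while keeping every $Z$-vertex of degree at least $p$ (needed to keep $G'$ in the family $\mathcal G$ for the edge comparison). In that situation I would instead import a vertex from $Y$ into $B_2$, again selecting the move according to the parity of $|V(B_2)|$ so that $q$ does not drop; this strictly increases the edge count and does not increase $\nu$, again contradicting the choice of $G$. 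When $r>p+1$ the petals smaller than $r$ carry no $K_r$, and the strict increase of $\mathcal N(K_r,\cdot)$ coming from the enlarged block already contradicts the maximality of $\operatorname{ex}$ directly, so that regime is handled without appealing to $\Phi$.
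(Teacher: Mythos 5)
Your overall strategy is exactly the paper's: transfer $T\in\{1,2\}$ vertices from the smaller small petal into the larger one, control $\nu$ by parity bookkeeping, get $\mathcal{N}(K_r,\cdot)$ non-decreasing from Lemma~\ref{ch}, and contradict the lexicographic maximality of $\Phi$ via the strict edge increase. However, your stated parity rule fails in one sub-case, and the failure sits precisely where your cycle-freeness check is a non sequitur. Your rule prescribes $T=2$ whenever $a_1,a_2$ have the \emph{same} parity, which includes the case where both blocks are odd; there $a_1$ can equal $2p-3$ (its petal has order $2p-4<2p-3$, so it is in the scope of Claim~\ref{structure}), and moving two vertices turns $B_1$ into a clique on $a_1+2=2p-1$ vertices, which contains $C_{2p-1}$, so $G'$ is not $C_{\geq 2p-1}$-free and no contradiction is obtained. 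Your justification ``valid because then $a_1$ is even and $\le 2p-4$'' does not follow from ``same parity.'' The repair is the option you mention only as an aside: in the both-odd-blocks case take $T=1$, which gives $q'=q+2$ (so $\nu$ even drops) and a grown block of order $a_1+1\le 2p-2$. This repaired case split is exactly the paper's: the paper moves one vertex whenever at least one petal is even (equivalently, at least one block is odd), and moves two vertices only when both petals are odd (equivalently, both blocks are even), in which case evenness forces $a_1\le 2p-4$ and the grown block stays within order $2p-2$.

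Modulo that fix, the rest of your argument tracks the paper's proof: the clique comparison is the same application of Lemma~\ref{ch}, and your closed formula $\nu(G)=(p-1)+\tfrac{|Z|-q}{2}$ is a cleaner justification of the matching assertions the paper states without proof (``the operation does not increase $\nu(G)$''). Two secondary remarks. Your worry about shrinking a minimum-order petal (order $p$) below the degree bound of Lemma~\ref{stability}(3) is a real subtlety that the paper silently ignores, so raising it is to your credit; but your patch of importing a vertex from $Y$ into $B_2$ has its own parity defect: importing into a block of \emph{even} order sends $|Z|\mapsto|Z|+1$ and $q\mapsto q-1$, hence increases $\nu$ by one and may create an $M_{s+1}$, so that patch only works when the target block is odd. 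Finally, the separate treatment of large $r$ is unnecessary: when the relevant binomial coefficients vanish, Lemma~\ref{ch} holds vacuously and the edge increase alone already yields $\Phi(G')>_{\mathrm{lex}}\Phi(G)$.
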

	\begin{proof}
		We prove it by contradiction. Suppose otherwise, by Claim \ref{clique}, there exist two maximal cliques \( K_x \) and \( K_y \) in $G[Z]$, where \( p  \leq y \leq x \leq 2p - 4 \).
		We first deduce that neither \( K_x \) nor \( K_y \) is an even clique. Suppose there exists an even clique between $K_x$ and $K_y$. Then we replace $K_x\cup K_y$ with $K_{x+1}\cup K_{y-1}$ to obtain a graph $G'.$
		The operation does not increase $\nu(G)$ or create a cycle in $C_{\geq2p-1}.$ By Lemma \ref{ch}, we have
		$$
		\binom{x+2}{r} + \binom{y}{r} \geq \binom{x+1}{r} + \binom{y+1}{r}.
		$$
		Then \( \mathcal{N}(K_r, G') \geq \mathcal{N}(K_r, G) \) and $e(G')>e(G)$,  a contradiction. Thus \( K_x \) and \( K_y \) are odd cliques.
		
		We replace $K_x\cup K_y$ with $K_{x+2}\cup K_{y-2}$. It is easy to verify that this operation does not   increase $\nu(G)$ or create a cycle in $C_{\geq2p-1}.$ Let \( G' \) denote the resulting graph. According to  Lemma \ref{ch},  we deduce
		$$
		\binom{x+3}{r} + \binom{y-1}{r} \geq \binom{x+1}{r} + \binom{y+1}{r},
		$$
		which ensures that \( \mathcal{N}(K_r, G') \geq \mathcal{N}(K_r, G) \) and $e(G')>e(G)$. Thus $\Phi(G') >_{\text{lex}} \Phi(G),$ a contradiction.
	\end{proof}
	
	Clearly, the matching number in \( G[Z] \) is at most \( s - p + 1 \). Define \( a = \lfloor\frac{s-p+1}{p-2}\rfloor \) and \( b = s-p+1-a(p-2) \),  \( 0 \leq b < p-2 \).

	We first show that the number of  \( K_{2p-3} \) in \( G[Z] \) is \( a \).
	If the number of \( K_{2p-3} \) in \( G\left[Z \right] \) is smaller than $a$,  one can check
	\begin{align*}
		\mathcal{N}\left(K_r, G\right) \leq\mathcal{N}\left(K_r,K_1\vee\left(K_{p-2} \vee I_{n-p+1-a(2p-3) }\cup aK_{2p-3}\right)\right)=\mathcal{N}(K_r,G_1)
	\end{align*}
	for any $2\leq r\leq 2p-2.$ In particular, we have $\mathcal{N}(K_3, G_1)>\mathcal{N}(K_3, G)$, which leads to a contradiction. Hence the number of  \( K_{2p-3} \) in \( G[Z] \) must be \( a \).
	By Claim \ref{structure}, there is at most a block with order less than \( 2p - 3 \) in $G[Z].$ In the following, we will determine the structure of the block that has an order less than $2p-3$ in $G[Z].$
	
	For $b< \lceil\frac{p-1}{2}\rceil$,  we claim there is no  block with order less than \( 2p - 3 \) in  $G[Z].$ Otherwise, let $K_t$ be a maximal clique with $t<2p-3.$ Then, we have $t\leq 2b+1<p.$ Recall every connected component of $G[Z]$ has an order at least $p$, leading to a contradiction. Hence, $G=G_1.$
	
	For $\lceil\frac{p-1}{2}\rceil\leq b\leq 2p-3,$ if there is no block with order less than \( 2p - 3 \) in $G[Z].$ Then $G \cong G_1.$
	If there is a maximal clique $K_t$ with order less than \( 2p - 3 \) in  $G[Z]$, we claim that $t=2b+1$. If $t<2b+1$, we change the neighborhood of a vertex in $Y$ to $V(K_t)\cup\{v_1\}$. Then the resulting graph is $\{C_{\geq 2p-1}, M_{s+1}\}$-free and has more cliques than $G$, leading to a contradiction. In this case, we have $G\cong G_2$. The result follows.

	\section{Forbidding $C_{\geq 2p}$}\label{section 4}
	In this section, we determine the generalized Tur\'an number of \( \{C_{\geq 2p}, M_{s+1}\} \). We first construct graphs $G_3,G_4,G_5$, $G_6.$\\
	\textbf{Construction.} Let \( q=\big\lfloor\frac{s-p+1}{p-2}\big\rfloor \) and \( t=s-p+1-q(p-2) \). Define
	$$
	G_3= \begin{cases}
		K_1\vee\left(K_{p-2} \vee I_{n-p+1-q(2 p-3)}\cup q K_{2 p-3}\right) & \text{ if } t=0 \\
		K_1\vee\left(K_{p-2} \vee I_{n-p-q(2 p-3)-2 t}\cup q K_{2 p-3} \cup K_{2 t+1}\right)& \text{ if } t \neq 0 .
	\end{cases}
	$$
	Let \( c=\big\lfloor\frac{s-p+1}{p-1}\big\rfloor \) and \( d=s-p+1-H(p-1) \). Define
	$$
	G_4= \begin{cases}
		K_1\vee\left(K_{p-2} \vee I_{n-p+1-c(2p-2)}\cup c K_{2 p-2} \right) & \text { if } d=0 \\
		K_1\vee\left(K_{p-2} \vee I_{n-p-c(2p-2)-2d} \cup c K_{2 p-2}\cup K_{2 d+1}\right) & \text { if } d \neq 0 .
	\end{cases}
	$$
	\begin{itemize}
		\item For \( 1 \leq d \leq p-3 \), we define
		$$
		G_5=K_1\vee\left(K_{p-2} \vee I_{n- (c-1)(2p-2)-d} \cup(c-p+d+2) K_{2 p-2}\cup (p-d-1) K_{2 p-3}\right).
		$$
		\item For \( 1 \leq d \leq p-2 \), we define
		$$
		G_6=K_1\vee\left(\left(K_{p-2} \vee\left(K_2 \cup I_{n-p-1-c(2p-2)}\right)\right) \cup c K_{2 p-2}\right) .
		$$
	\end{itemize}
	It is easy to check that $G_3, G_4, G_5$ and $ G_6$ are \( \{C_{\geq 2p}, M_{s+1}\} \)-free.
	The lower bounds of \( \operatorname{ex}\left(n, K_r, \left\{C_{\geq 2 p}, M_{s+1}\right\}\right)\) are established by considering graphs $G_3,G_4,G_5$ and $G_6$.  For the upper bounds, let \( \mathcal{G} \) be the family of extremal graphs with the properties stated in Lemma \ref{stability}. Let $X=\{v_1,v_2,\ldots,v_{p-1}\}.$
	
	Define $\Phi: \mathcal{G} \rightarrow \mathbb{R}^4$ as a map such that $\Phi(G)=\left(e(G), k_3(G),c\left(Z_G\right)+\left|Y_G\right|,c\left(Z_G\right)\right)$. We choose $G \in\mathcal{G}$ such that the $\Phi(G)$ is lexicographically maximal, and under this condition $G$ has the largest maximum degree. We will  show that \( G \) is isomorphic to $G_3,G_4,G_5$ or $G_6$.
	\begin{claim}\label{find cycle2}
		If there is a path \( P \) in \( G[Z \cup X] \) such that the ends of \( P \) are contained in \( X \) and \( |V(P) \cap Z| > |V(P) \cap X |=t\geq 2\),	then there exists a cycle of length at least \(2p \) in \( G .\)
	\end{claim}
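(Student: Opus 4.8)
The plan is to mirror the argument of Claim~\ref{find cycle}, closing the path \( P \) into a long cycle by routing through the complete bipartite graph \( G[X,Y] \), but now exploiting the \emph{strict} inequality \( |V(P)\cap Z| > t \) to gain the extra unit of length that upgrades \( 2p-1 \) to \( 2p \). First I would record the length of \( P \). Writing \( |V(P)\cap X| = t \) and \( |V(P)\cap Z| \geq t+1 \), we obtain \( |V(P)| \geq 2t+1 \), so \( P \) has at least \( 2t \) edges. Both ends of \( P \) lie in \( X \); call them \( v_1 \) and \( v_2 \). Since \( P \) uses exactly \( t \) of the \( p-1 \) vertices of \( X \), there remain \( p-1-t \geq 0 \) vertices of \( X \) not on \( P \).

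Next I would build a closing path \( P' \) from \( v_1 \) to \( v_2 \) inside \( G[X,Y] \). Because \( G[X,Y] \) is complete bipartite by Lemma~\ref{stability}(2) and \( |Y| \) is large, I can take an alternating path starting at \( v_1 \), ending at \( v_2 \), that visits \( v_1 \), \( v_2 \), and all \( p-1-t \) remaining vertices of \( X \), inserting a fresh vertex of \( Y \) between each consecutive pair of \( X \)-vertices. Concretely, ordering the unused \( X \)-vertices as \( u_1,\dots,u_{p-1-t} \), the path \( v_1, w_0, u_1, w_1, \dots, u_{p-1-t}, w_{p-1-t}, v_2 \) with distinct \( w_i \in Y \) uses \( p+1-t \) vertices of \( X \) and \( p-t \) vertices of \( Y \), hence has \( 2(p-t) = 2p-2t \) edges.

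Finally, I would verify disjointness and count. Since \( P \subseteq G[Z\cup X] \) contains no vertex of \( Y \), and the only \( X \)-vertices shared between \( P \) and \( P' \) are \( v_1, v_2 \) (the remaining \( X \)-vertices of \( P' \) are precisely those off \( P \)), the paths \( P \) and \( P' \) meet only at their common ends. Thus the concatenation \( P \cup P' \) is a genuine cycle of length
\[
|E(P)| + |E(P')| \geq 2t + (2p - 2t) = 2p,
\]
which lies in \( C_{\geq 2p} \), as desired.

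The computation is routine; the only points requiring care are confirming that enough unused vertices of \( Y \) exist to realize \( P' \) (guaranteed since \( |Y| = n - t_0 - p + 1 \) is large) and checking the extremal case \( t = p-1 \), where no \( X \)-vertices remain and \( P' \) degenerates to the length-\(2\) path \( v_1 w_0 v_2 \) — still contributing exactly \( 2p - 2t = 2 \) edges. I expect the main (and essentially only) subtlety to be recognizing that the strict inequality \( |V(P)\cap Z| > t \) is precisely what supplies the final edge needed to reach \( 2p \) rather than \( 2p-1 \).
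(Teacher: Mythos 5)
Your proof is correct and takes essentially the same route as the paper's: both close $P$ via a path $P'$ in the complete bipartite graph $G[X,Y]$ that uses the $p-1-t$ vertices of $X$ off $P$ together with $p-t$ fresh vertices of $Y$, giving a cycle of length at least $|E(P)|+|E(P')| \geq 2t + (2p-2t) = 2p$. If anything, your version is slightly more careful than the paper's, which asserts $V(P')\cap V(P)=\emptyset$ even though $v_1,v_2$ are necessarily shared, whereas you correctly phrase the requirement as internal disjointness and also check the degenerate case $t=p-1$.
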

	\begin{proof}
		Without loss of generality, assume  $\{v_1,v_2\}\subseteq V(P)\cap X.$ Since $|V(P) \cap Z| > |V(P) \cap X |=t$, we have $|V(P)|\ge 2t+1$. As $G[X,Y]$ is a complete bipartite graph, there exists a path $P^{\prime}$ from $v_1$ to $v_2$  in $G[X,Y]$ such that $|V(P')|=2p-2t+1$ and $V(P')\cap V(P)=\emptyset.$ Clearly, the concatenation of \( P \) and \( P^{\prime} \) forms a cycle of length at least \( 2p\).
	\end{proof}

	Define an edge in \( G[Z] \) as an {\sl exceptional edge} if both of its ends are adjacent to every vertex in  \( X \).
	It follows from Lemma \ref{stability} (3) that  a connected component with two vertices in $G[Z]$ must be an exceptional edge.
	\begin{claim}\label{exceptional}
		The number of exceptional edges in \( G[Z] \) is at most 1.
	\end{claim}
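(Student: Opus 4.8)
The plan is to argue by contradiction: suppose $G[Z]$ contains two distinct exceptional edges $e_1$ and $e_2$, and produce a cycle of length at least $2p$ in $G$, contradicting that $G$ is $C_{\geq 2p}$-free. Throughout I would use three features of the configuration: $X$ induces a clique $K_{p-1}$ by Lemma~\ref{stability}(1); $Y$ is an independent set of size $n-t_0-p+1$ each of whose vertices is adjacent to all of $X$ by Lemma~\ref{stability}(2); and, by the definition of an exceptional edge, both endpoints of $e_1$ and of $e_2$ lie in $Z$ and are adjacent to every vertex of $X=\{v_1,\dots,v_{p-1}\}$.

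The first step is to split into two cases according to whether $e_1$ and $e_2$ share a vertex. If they do, write $e_1=w_1w_2$ and $e_2=w_2w_3$ with $w_1,w_2,w_3$ distinct vertices of $Z$, each adjacent to all of $X$. Then $P:=v_1w_1w_2w_3v_2$ is a path in $G[Z\cup X]$ whose ends $v_1,v_2$ lie in $X$, with $|V(P)\cap Z|=3>2=|V(P)\cap X|$ and $t:=|V(P)\cap X|=2\ge 2$. Claim~\ref{find cycle2} then immediately yields a cycle of length at least $2p$, the desired contradiction.

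In the remaining case $e_1=u_1u_1'$ and $e_2=u_2u_2'$ are vertex-disjoint, so $u_1,u_1',u_2,u_2'$ are four distinct vertices of $Z$, each joined to all of $v_1,\dots,v_{p-1}$. Here I would build the cycle explicitly rather than invoke Claim~\ref{find cycle2} (which requires three distinct vertices of $X$ to route through, and so is unavailable when $p=3$). Choose $p-3$ distinct vertices $y_1,\dots,y_{p-3}\in Y$, which is possible since $|Y|$ is large, and consider the closed walk $v_1\,u_1\,u_1'\,v_2\,u_2\,u_2'\,v_3\,y_1\,v_4\,y_2\cdots v_{p-1}\,y_{p-3}\,v_1$. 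Every consecutive pair is an edge of $G$: the two exceptional edges $u_1u_1'$ and $u_2u_2'$ are present by hypothesis, while all other adjacencies have the form $u$--$v_j$ or $y_i$--$v_j$ and hold because each $u$-endpoint and each $y$-vertex is joined to all of $X$. This visits $(p-1)+4+(p-3)=2p$ distinct vertices, so it is a $C_{2p}$ in $G$, a contradiction.

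The main point to check carefully is this disjoint case: that the interleaving genuinely closes into a single cycle rather than a multigraph or a shorter closed walk. The verification is a gap count — there are exactly $p-1$ gaps between consecutive vertices of $X$ on the cycle, the two exceptional edges occupy two of them (after $v_1$ and after $v_2$) and the $p-3$ chosen vertices of $Y$ fill the remaining $p-3$ gaps, so each $v_j$ occurs once and has exactly two cycle-neighbours. I would also record that the degenerate value $p=3$ is included: there the construction uses no $Y$-vertex and collapses to the $6$-cycle $v_1u_1u_1'v_2u_2u_2'v_1$. Since both cases contradict the $C_{\geq 2p}$-freeness of $G$, at most one exceptional edge can exist in $G[Z]$.
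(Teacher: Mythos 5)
Your proof is correct and takes essentially the same approach as the paper: both argue by contradiction, split on whether the two exceptional edges share a vertex, and in each case produce a cycle of length at least $2p$ by routing through $X$ and $Y$, with the shared-vertex case handled via Claim~\ref{find cycle2} in both. The only real difference is in the disjoint case, where the paper splits into $p=3$ (an explicit $6$-cycle) and $p\geq 4$ (Claim~\ref{find cycle2} applied to the path $v_1x_1y_1v_2x_2y_2v_3$), while you give a single explicit $2p$-cycle valid for all $p\geq 3$ --- which amounts to inlining the proof of Claim~\ref{find cycle2}, so the substance is identical.
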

	\begin{proof}
		Suppose to the contrary that  \( x_1y_1 \) and \( x_2y_2 \) are two exceptional edges in \( G[Z] \).
		
		Case 1. If $\lvert\{x_1, y_1\}\cap\{x_2,y_2\}\rvert=0$, then we divide it into two cases.
		If \( p = 3 \), we can find a $6$-cycle    \( v_1x_1y_1v_2x_2y_2v_1 \) in $G$, a contradiction.
		If \( p \geq 4 \), one can find a path  \( v_1x_1y_1v_2x_2y_2v_3 \) in $G[X\cup Z]$. By Claim \ref{find cycle2}, there exists a cycle of length at least \(2p \) in $G$, a contradiction.
		
		Case 2. If $\lvert\{x_1, y_1\}\cap\{x_2,y_2\}\rvert=1$, then there exists a $3$-path $P$ in $G[Z]$. The path $P$ together with $\{v_1, v_2\}$ forms a path satisfying the condition of Claim \ref{find cycle2}, which leads to a cycle of length at least \(2p \) in $G$, a contradiction.
	\end{proof}
	
	Let $H$ be a connected component in $G[Z].$
	\begin{claim}\label{cut-ex}
		If $uv$  is a cut-edge  of $H$, then $uv$ is an exceptional edge.
	\end{claim}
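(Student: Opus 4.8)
The plan is to argue by contradiction, exploiting the lexicographic maximality of $\Phi(G)=\left(e(G),k_3(G),c(Z_G)+|Y_G|,c(Z_G)\right)$. Suppose $uv$ is a cut-edge of $H$ that is \emph{not} exceptional, so, relabelling if necessary, $u$ is not adjacent to all of $X$; write $a=|N_X(u)|$, $b=|N_X(v)|$ with $a\le b$, and $c=|N_X(u)\cap N_X(v)|$, so that $a\le p-2$. The first structural fact I would record is that $u$ and $v$ have \emph{no} common neighbour inside $Z$: a common $Z$-neighbour would place $uv$ on a triangle of $H$, hence on a cycle of $H$, contradicting that $uv$ is a cut-edge. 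Consequently every common neighbour of $u$ and $v$ lies in $X$, the $r$-cliques through $uv$ are exactly the sets $\{u,v\}\cup S$ with $S$ a clique in $N_X(u)\cap N_X(v)$ (of which there are $\binom{c}{r-2}$), and $c\le\min\{a,b\}=a\le p-2$. Let $Z_u$ and $Z_v$ denote the $Z$-neighbours of $u$ and of $v$ on their respective sides of the cut; note there are no edges between $Z_u$ and $Z_v$.

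The operation I would use is the one already employed in Claim~\ref{base} and Claim~\ref{block size}: contract $uv$ (which by Lemma~\ref{contract} keeps the graph $\{C_{\geq 2p},M_{s+1}\}$-free) and add one new vertex $y'$ to $Y$, joined to all of $X$; call the result $G'$. Because $|Y|$ is enormous, $y'$ cannot produce a forbidden configuration that an existing $Y$-vertex could not already realise: any $C_{\geq 2p}$ or $M_{s+1}$ using $y'$, after swapping $y'$ for an unused $Y$-vertex with the same neighbourhood $X$, would already appear in $G/uv$. Hence $G'$ is $\{C_{\geq 2p},M_{s+1}\}$-free. I would then check $\mathcal{N}(K_r,G')\ge\mathcal{N}(K_r,G)$: contraction deletes the $\binom{c}{r-2}$ cliques through $uv$ and merges certain cliques through $u$ with cliques through $v$ (which can only share $X$-vertices, since $Z_u$ and $Z_v$ are non-adjacent), while $y'$ restores $\binom{p-1}{r-1}$ cliques; a direct count using $c\le p-2$ together with Pascal's identity shows the net change is nonnegative. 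Thus $G'\in\mathcal{G}$, and it remains to prove $\Phi(G')>_{\mathrm{lex}}\Phi(G)$.

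For the bookkeeping: contraction removes the edge $uv$ and the $c$ edges that become parallel, while $y'$ adds $p-1$ edges, so $e(G')-e(G)=(p-1)-(1+c)=p-2-c\ge 0$. If $c<p-2$, then $e(G')>e(G)$ and $\Phi$ increases already in its first coordinate, a contradiction. The remaining, and main, case is the boundary $c=p-2$, which forces either $N_X(u)\subsetneq N_X(v)=X$ or $N_X(u)=N_X(v)$ with $|N_X(u)|=p-2$; here $e$ is unchanged. The decisive observation is that contracting an edge internal to the connected component $H$ leaves $c(Z)$ unchanged while $|Y|$ grows by one, so $c(Z_{G'})+|Y_{G'}|=c(Z_G)+|Y_G|+1$; hence, once $k_3(G')\ge k_3(G)$ is verified, the third coordinate forces $\Phi(G')>_{\mathrm{lex}}\Phi(G)$. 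The triangle count is where the genuine work sits: adding $y'$ creates $\binom{p-1}{2}$ triangles, and contraction changes the triangle count by $-\binom{p-2}{2}-(p-2)+e_G(Z_u,\{v_i\})$ in the first subcase (with $v_i$ the unique $X$-vertex missed by $u$) and by $-\binom{p-2}{2}-(p-2)$ in the second; since $\binom{p-1}{2}-\binom{p-2}{2}=p-2$, both yield $k_3(G')-k_3(G)\ge 0$. This contradicts the maximality of $\Phi(G)$ and completes the proof. I expect the main obstacle to be exactly this boundary regime $c=p-2$: it is the only case in which no coordinate of $\Phi$ has slack before the third, so one must simultaneously confirm $\mathcal{N}(K_r,G')\ge\mathcal{N}(K_r,G)$ for the target clique size $r$ and $k_3(G')\ge k_3(G)$, making the triangle and $r$-clique bookkeeping the technical heart of the argument.
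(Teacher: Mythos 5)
Your proof is correct and follows essentially the same route as the paper: contract the non-exceptional cut-edge, add a vertex to $Y$, use the fact that the common neighbourhood of $u,v$ lies in $X$ and has size at most $p-2$ to bound the clique loss by $\binom{p-2}{r-2}+\binom{p-2}{r-1}=\binom{p-1}{r-1}$, and conclude from the lexicographic maximality of $\Phi$. Your explicit handling of the boundary case $c=p-2$ (where the edge and triangle counts balance exactly and the third coordinate $c(Z_G)+|Y_G|$ provides the strict increase) is more detailed than the paper's terse assertion that $\Phi(G')>_{\mathrm{lex}}\Phi(G)$, but it is the same argument.
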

	\begin{proof}  Suppose to the contrary that $uv$ is not an exceptional edge. Then one of $u, v$ is adjacent to at most $p-2$ vertices in $X$.
		We contract  edge \( uv \) and add a vertex to \( Y \). Denote by \( G' \) the resultant graph. Clearly, \( G' \) is \( \{C_{\geq 2p}, M_{s+1}\} \)-free. Note that $uv$ is a cut-edge in $H$. Then there are at most $\binom{p-2}{r-2}$ $r$-cliques containing  $e$. Thus we lose at most \(\binom{p-2}{r-1}+ \binom{p-2}{r-2}=\binom{p-1}{r-1} \) \( r \)-cliques and add \( \binom{p-1}{r-1} \) \( r \)-cliques.
		It follows that \( \mathcal{N}(K_r, G') \geq \mathcal{N}(K_r, G) \) and \( \Phi(G') >_{\text{lex}} \Phi(G) \) contradict the assumption of $G$.
	\end{proof}

	\begin{claim}\label{matching}
		If $uv$ is a cut-edge of $H$, then every maximum matching $M$ of $G$ contains $uv$.
	\end{claim}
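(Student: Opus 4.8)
The plan is to use the large independent set $Y$ to reduce the statement to a matching problem inside $G[Z]$, and then to contradict the extremal choice of $G$.

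First I would exploit $Y$. Since each vertex of $Y$ has neighbourhood exactly $X$ and $|Y|$ has order $n$, every maximum matching $M$ of $G$ must saturate $X$: an exposed $v_i\in X$ together with an exposed $y\in Y$ would enlarge $M$. Now every edge of $M$ lies in $E(G[Z])$, joins $X$ to $Y\cup Z$, or lies inside $X$, and since $M$ is a matching at most $p-1$ of its edges can touch $X$; hence $|M|\le (p-1)+\nu(G[Z])$ for \emph{every} matching. Matching all of $X$ into distinct vertices of $Y$ and adding a maximum matching of $G[Z]$ attains this bound, so $\nu(G)=(p-1)+\nu(G[Z])$. Thus for a maximum $M$ the bound is tight and $M\cap E(G[Z])$ is a maximum matching of $G[Z]$; conversely any maximum matching of $G[Z]$ extends to a maximum matching of $G$ by sending $X$ into $Y$. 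Therefore $uv$ lies in every maximum matching of $G$ if and only if it lies in every maximum matching of $G[Z]$.

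Next I would analyse the bridge itself. By Claim~\ref{cut-ex} the cut-edge $uv$ is exceptional, and by Claim~\ref{exceptional} there is at most one exceptional edge, so $uv$ is the unique cut-edge of $H$ and $H-uv$ is the disjoint union of two bridgeless graphs $H_u\ni u$ and $H_v\ni v$. By the standard bridge identity $\nu(H)=\max\{\nu(H_u)+\nu(H_v),\,1+\nu(H_u-u)+\nu(H_v-v)\}$, the edge $uv$ lies in every maximum matching of $H$ exactly when $u$ is inessential in $H_u$ \emph{and} $v$ is inessential in $H_v$, that is $\nu(H_u-u)=\nu(H_u)$ and $\nu(H_v-v)=\nu(H_v)$. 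So it suffices to show that both endpoints are inessential on their sides.

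Suppose not; without loss of generality $u$ is essential in $H_u$. Then $u$ is covered by every maximum matching of $H$ (either the $H_u$-part covers it when the bridge is unused, or the bridge covers it), and I would add to $G$ a single new edge $e^{+}$ incident to $u$ joining it to a current non-neighbour, chosen so that the two endpoints of $e^{+}$ cannot be exposed simultaneously by a maximum matching. Because one endpoint is covered by every maximum matching, no augmenting path can use $e^{+}$, so $\nu(G+e^{+})=\nu(G)$ and $G+e^{+}$ stays $M_{s+1}$-free; since $e^{+}$ strictly increases $e(G)$, this contradicts the lexicographic maximality of $\Phi(G)$. The step where I expect the real work is checking that $G+e^{+}$ remains $C_{\ge 2p}$-free: every new cycle through $e^{+}$ must traverse the bridge $uv$ and hence lies inside $H_u\cup\{v\}$ or $H_v\cup\{u\}$, so I would bound the orders of $H_u$ and $H_v$, using that they are bridgeless, $C_{\ge 2p}$-free and attached to $X$, so that a long path in a side together with $X$ would already force a long cycle in $G$ by the mechanism of Claim~\ref{find cycle2}. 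Handling the degenerate configurations (a side of order one, or $H_u$ complete so that $u$ has no interior non-neighbour) and selecting $e^{+}$ correctly in each case is the delicate part that genuinely relies on the extremal hypotheses.
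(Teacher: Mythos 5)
Your first two paragraphs are sound: the reduction via $Y$-saturation (every maximum matching saturates $X$, so $\nu(G)=(p-1)+\nu(G[Z])$ and maximum matchings restrict/extend between $G$ and $G[Z]$), together with the bridge identity reducing the claim to ``$u$ is inessential in $H_u$ and $v$ is inessential in $H_v$,'' is correct and matches the logical skeleton of the paper (which runs the final step as an explicit matching exchange rather than through the bridge identity). The genuine gap is in your third paragraph, the contradiction step, and the edge-addition strategy cannot be repaired. Your containment claim that any new long cycle through $e^{+}$ ``lies inside $H_u\cup\{v\}$ or $H_v\cup\{u\}$'' is false: since $uv$ is exceptional by Claim~\ref{cut-ex}, both $u$ and $v$ are adjacent to \emph{every} vertex of $X$, so taking $e^{+}=uy$ with $y\in Y$ creates the cycle $u,y,x_1,y_1,x_2,\ldots,y_{p-2},x_{p-1},v,u$ (with distinct $y_i\in Y$, which exist as $|Y|$ is huge), of length exactly $2p$; it weaves through $X\cup Y$ and uses the bridge, staying in neither side. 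Likewise $e^{+}=uw'$ with $w'\in V(H_v)\setminus\{v\}$ closes a cycle of length at least $2p$ by running from $w'$ to $v$ inside $H_v$ and then from $v$ around $X\cup Y$ back to $u$. So the only admissible candidates for $e^{+}$ are non-neighbours of $u$ inside $H_u$.

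But that candidate set can be empty precisely when your hypothesis ``$u$ is essential in $H_u$'' holds: take $H_u$ to be an even clique, say $K_{2p-2}$ (nothing proved before Claim~\ref{base2} forbids this, and $K_{2p-2}$-blocks do occur in the extremal graphs $G_4$ and $G_6$). Then $u$ is essential in $H_u$, yet $u$ has no non-neighbour in $H_u$, is already adjacent to all of $X$, and every external addition creates a $C_{\geq 2p}$ as above — your plan has no legal move, while this is exactly a configuration the claim must exclude. The paper excludes it with a different operation, which is the idea you are missing: split $u$ into $u_1$ (inheriting the neighbours of $u$ in $C_u=H_u$) and $u_2$ (inheriting the remaining neighbours, namely $X\cup\{v\}$), and then identify $u_1$ with $v_1\in X$. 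Because $uv$ is a cut-edge and $V(H)\setminus\{u,v\}$ sends no edges to $X$, this surgery preserves $\mathcal{N}(K_r,\cdot)$ and $C_{\geq 2p}$-freeness, and it does not raise the matching number (splitting raises $\nu$ by at most one, and identifying $u_1$ with $v_1$ lowers it back, since $u_1$ lies in every maximum matching of $C_u$ by assumption and $v_1$ lies in every maximum matching of its part); meanwhile it increases the number of components of $G[Z]$, contradicting the lexicographic maximality of $\Phi$. A secondary omission: you use, but never establish, that no vertex of $V(H)\setminus\{u,v\}$ is adjacent to $X$ (the paper derives this from Claim~\ref{find cycle2}); that fact is needed before any of these containment arguments can even start.
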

	\begin{proof}
		By Claim \ref{cut-ex}, $uv$ is an exceptional edge. We claim that there is no edge between $V(H)\setminus\{u,v\}$ and $X$. Otherwise, by Claim \ref{find cycle2}, one can find a path $P$ such that \( |V(P) \cap Z| > |V(P) \cap X |=2\), leading to a cycle of length at least $2p$ in $G$, a contradiction.

		Let $C_u$ and $C_v$ be two connected components in $H\setminus uv$  containing $u$ and $v$, respectively.
		We first show that both \( C_u \) and \( C_v \) contain a maximum matching excluding $u$ and $v$, respectively.  If not,  we may suppose that every maximum matching in \( C_u \) contains \( u \). Then, we split \( u \) into two vertices \( u_1 \) and \( u_2 \), where \( u_1 \) inherits all neighbors of \( u \) in \( C_u \), and \( u_2 \) inherits all neighbors of \( u \) in \( G \setminus C_u \). Then we identify \( u_1 \) and \( v_1 \). Let \( G' \) denote the resultant graph.
		
		The fact that no edge between $V(H)\setminus\{u,v\}$ and $X$ implies that \( G' \) is  \( C_{\geq 2p} \)-free. Next we show that $G'$ is $M_{s+1}$-free. Note that when we split $u$  into $u_1$ and $u_2$, the matching
		number increases by at most one. When we identify $u_1$ and $v_1$, the matching number decreases by $1$, as $u_1$ and $v_1$  are contained in every maximum matching in $C_u$ and $G[X\cup  Y]$, respectively.
		Thus $G'$ is $M_{s+1}$-free.
		Moreover,  we have \( \mathcal{N}(K_r, G')= \mathcal{N}(K_r, G) \) as $uv$ is a cut-edge, and \( \Phi(G') >_{\text{lex}} \Phi(G) \),  leading to a contradiction. Thus both \( C_u \) and \( C_v \) contain  a maximum matching excluding $u$ and $v$, respectively. Let $M_1', M_2'$ be the maximum matching excluding $u$ and $v$ of \( C_u \) and \( C_v \), respectively.
		
		To show $uv$ is contained in every maximum matching of $G$, it
		is sufficient to show that there is no edge  $ux$ or $vx$  in every maximum matching of $G$, where $x\in V(G)\setminus\{u,v\}.$
		Suppose otherwise, we may assume there exists a maximum matching $M$ of $G$ with $ux\in M$, where $x\in X\cup V(C_u)$. Let $M_1=M\cap E(C_u), M_2=M\cap E(C_v)$.
		If $x\in X$, recall that $G[X, Y]$ is a complete bipartite graph, we can always find a  vertex $y$  in $Y$ which is not matched by $M$.  Replacing $M_1\cup M_2\cup \{xu\}$ with $M_1'\cup M_2'\cup \{uv\}\cup \{xy\}$, we can find a matching of $G$ whose size is larger than $M$, a contradiction. If $x\in C_u$, replacing $M_1\cup M_2$ with $M_1'\cup M_2'\cup \{uv\}$, we also find a matching of $G$ whose size is larger than $M$, a contradiction.
	\end{proof}
	
	In the following discussion, we will determine the structure of \( H\) with $|V(H)|\geq3$.
	\begin{claim}\label{2base}
		If \( H\) is a strict block-cut tree in $G[Z]$, then $|N_{G[X]}(V(H))|=1.$
	\end{claim}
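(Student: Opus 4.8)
The plan is to prove the contrapositive structure by contradiction, mirroring the strategy already established in Claim~\ref{base}~(1) of the odd case. Suppose $H$ is a strict block-cut tree in $G[Z]$ but $|N_{G[X]}(V(H))| \neq 1$. Since $H$ is strict, it contains no cut-edge, so by Claim~\ref{cut-ex} the component $H$ contains no exceptional edge; in particular $H$ is not a single exceptional edge, which is consistent with the hypothesis $|V(H)| \geq 3$. I would then split into the two cases $|N_{G[X]}(V(H))| = 0$ and $|N_{G[X]}(V(H))| \geq 2$.

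First I would dispose of the case $|N_{G[X]}(V(H))| = 0$. Here no vertex of $H$ is adjacent to any vertex of $X$, so one can add a single edge from some $u \in V(H)$ to $v_1 \in X$, producing a graph $G'$. Adding one such edge creates no new cycle of length at least $2p$ (since $H$ was previously disconnected from $X$ and a single new edge cannot close a long cycle through $X$) and does not increase the matching number beyond $s$, so $G'$ remains $\{C_{\geq 2p}, M_{s+1}\}$-free; moreover $\mathcal{N}(K_r, G') \geq \mathcal{N}(K_r, G)$ while $e(G') > e(G)$, contradicting the maximality of $e(G)$ in $\Phi(G)$.

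The main case is $|N_{G[X]}(V(H))| \geq 2$. The key obstacle is to rule out two vertices of $H$ sending edges to distinct vertices of $X$, and this is exactly where Claim~\ref{find cycle2} does the work. If there exist $u_1, u_2 \in V(H)$ (possibly equal images under the neighborhood map, but attached to distinct $X$-vertices) with $u_1 v_1, u_2 v_2 \in E(G)$ for distinct $v_1, v_2 \in X$, then since $H$ is connected there is a $u_1$–$u_2$ path $P'$ inside $H$; prepending $v_1 u_1$ and appending $u_2 v_2$ yields a path $P$ in $G[Z \cup X]$ with ends in $X$ and $|V(P) \cap Z| > |V(P) \cap X| = 2$ (the strict inequality holds because $|V(H)| \geq 3$ forces $P'$ to contain an internal $Z$-vertex, or one checks the boundary case directly). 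By Claim~\ref{find cycle2} this produces a cycle of length at least $2p$ in $G$, contradicting $C_{\geq 2p}$-freeness. Hence all edges from $V(H)$ to $X$ land on a single vertex, say $v_1$, so that $|N_{G[X]}(V(H))| = 1$.

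The remaining subtlety is the degenerate situation where only one vertex $u \in V(H)$ sends edges to $X$ but reaches two vertices of $X$, giving $|N_{G[X]}(V(H))| = 2$ without two distinct $H$-endpoints. This is handled by a contraction argument as in Claim~\ref{base}: contract an edge $u v_1$ (losing at most $\binom{p-1}{r-1}$ copies of $K_r$) and add a fresh vertex to $Y$ (regaining $\binom{p-1}{r-1}$ copies), producing $G'$ that is $\{C_{\geq 2p}, M_{s+1}\}$-free by Lemma~\ref{contract} with $\mathcal{N}(K_r, G') \geq \mathcal{N}(K_r, G)$ and $\Phi(G') >_{\mathrm{lex}} \Phi(G)$, again a contradiction. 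The hard part, as anticipated, is the careful verification that the path $P$ genuinely satisfies the strict inequality $|V(P) \cap Z| > |V(P) \cap X|$ needed to invoke Claim~\ref{find cycle2}; once that is secured, every branch closes by the now-familiar contraction-and-$\Phi$-maximality technology.
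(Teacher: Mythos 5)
Your overall architecture matches the paper's proof of this claim: dismiss $|N_{G[X]}(V(H))| = 0$ by adding an edge, handle two attachment vertices via a path plus Claim~\ref{find cycle2}, and handle the single-attachment-vertex case by contraction and $\Phi$-maximality. However, there is a genuine gap at exactly the step you flag as the hard part. To invoke Claim~\ref{find cycle2} you need $|V(P)\cap Z| > |V(P)\cap X| = 2$, i.e.\ the $u_1$--$u_2$ path $P'$ inside $H$ must have at least three vertices, and both of your justifications fail. First, connectivity of $H$ together with $|V(H)|\geq 3$ does not force this: if $u_1u_2 \in E(H)$, the path supplied by connectivity may be that single edge, and mere connectivity plus $|V(H)|\geq 3$ does not guarantee any longer $u_1$--$u_2$ path exists (consider a path graph $u_3u_1u_2$). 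Second, the boundary case cannot be ``checked directly'': concatenating $v_1u_1u_2v_2$ with a longest $v_1$--$v_2$ path through $X\cup Y$ yields a cycle of length only $2p-1$, which is \emph{not} forbidden in this section; indeed this exact configuration --- an edge of $G[Z]$ with both ends joined to all of $X$ --- is realized by the exceptional edge in the extremal graph $G_6$, so no contradiction can be extracted from it. The missing ingredient is the strictness hypothesis, which your argument never uses at this point: since $H$ is a strict block-cut tree, an edge $u_1u_2$ lies in a $2$-connected block on at least three vertices, hence there is a $u_1$--$u_2$ path of length at least $2$ obtained by rerouting within that block, and this is what secures $|V(P)\cap Z| \geq 3 > 2$. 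This is precisely how the paper argues (``Since $H$ is a strict block-cut tree, there exists a path $P'$ from $u_1$ to $u_2$ in $H$ with length at least $3$'').

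A secondary, non-load-bearing slip: you infer from Claim~\ref{cut-ex} that $H$ contains no exceptional edge. That claim states that cut-edges are exceptional, not the converse, so nothing follows about exceptional edges inside blocks; ruling those out in a strict $H$ in fact requires the same long-cycle argument whose key step is missing above.
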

	\begin{proof}
		Clearly, there is at least one edge in $E(V(H),X)$. Suppose to the contrary that   $|N_{G[X]}(V(H))|$ $\not= 1$, then $|N_{G[X]}(V(H))|\geq 2.$
		If there exist at least two vertices \( u_1, u_2 \in V(H) \) that are connected to distinct vertices in \( G[X] \) respectively, without loss of generality, assume  \( u_1 v_1, u_2 v_2 \in E(G) \).
		Since \( H \) is a strict block-cut tree, there exists a path \( P' \)  from $u_1$ to $u_2$ in \( H \) with length at least 3. By adding the edges \( u_1 v_1 \) and \( u_2 v_2 \), we can extend the path \( P' \)  to a path \( P \) with ends \( v_1 \) and \( v_2 \). Note that $|V(P) \cap V(H)| > |V(P) \cap X|=2.$ By Claim~\ref{find cycle2},  there  exists a cycle of length at least \( 2p \) in $G$, leading to a contradiction.
		So we may assume that only one vertex in \( H \)  is connected to  vertices in \( G[X] \).
		We contract an edge $e\in E(V(H),X)$, and add a vertex to \( Y \). Let the resulting graph be \( G' .\) Clearly, \( G' \) is \( \{C_{\geq 2p}, M_{s+1}\} \)-free and
		\( \mathcal{N}(K_r, G') \geq \mathcal{N}(K_r, G) \). Furthermore, we have \( \Phi(G') >_{\text{lex}} \Phi(G) \), leading to a contradiction.
	\end{proof}

	\begin{claim}\label{base2}
		Let $H$ be a connected component in $G[Z]$. Then $H$ is a strict block-cut tree.
	\end{claim}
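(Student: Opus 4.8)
The plan is to argue by contradiction, so suppose $H$ (which, as in the surrounding discussion, has $|V(H)|\ge 3$) is not a strict block-cut tree. Since a non-strict block-cut tree must contain a cut-edge as a block, $H$ has a cut-edge $uv$. By Claim \ref{cut-ex} this $uv$ is exceptional, so both $u$ and $v$ are adjacent to every vertex of $X$; together with $G[X]=K_{p-1}$ this shows $\{u,v\}\cup X$ spans a $K_{p+1}$. From Claim \ref{matching} and its proof I will also use that there is no edge between $V(H)\setminus\{u,v\}$ and $X$ and that $uv$ lies in every maximum matching of $G$. Because $uv$ is a cut-edge, $u$ and $v$ have no common neighbour in $Z$, so the $r$-cliques of $G$ containing both $u$ and $v$ are exactly the sets $\{u,v\}\cup S$ with $S\subseteq X$, of which there are $\binom{p-1}{r-2}$.

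The key step is to trade the exceptional cut-edge for a ``free'' exceptional edge. Let $C_u,C_v$ be the components of $H-uv$ and, since $|V(H)|\ge 3$, assume $|V(C_v)|\ge 2$. I would form $G'$ from $G$ by deleting $uv$ and inserting a new edge $y_1y_2$ between two vertices of $Y$ (and, in the degenerate case $C_u=\{u\}$, simply regarding the resulting degree-$(p-1)$ vertex $u$ as belonging to $Y$ and taking $y_1=u$). This leaves the edge count fixed, $e(G')=e(G)$, and is clique neutral: the $\binom{p-1}{r-2}$ cliques lost with $uv$ are exactly replaced by the $\binom{p-1}{r-2}$ cliques created by the new exceptional edge $y_1y_2$, so $\mathcal{N}(K_r,G')=\mathcal{N}(K_r,G)$ and $k_3(G')=k_3(G)$. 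Since $uv$ lay in every maximum matching of $G$, we have $\nu(G-uv)\le \nu(G)-1\le s-1$, and adding one edge raises the matching number by at most one, so $\nu(G')\le s$ and $G'$ is $M_{s+1}$-free. The operation strictly increases the number of components of $G[Z]$ while leaving $c(Z)+|Y|$ unchanged, so $c(Z_{G'})+|Y_{G'}|=c(Z_{G})+|Y_{G}|$ but $c(Z_{G'})>c(Z_{G})$. A routine check of the partition conditions shows $G'\in\mathcal G$, and then $\Phi(G')>_{\mathrm{lex}}\Phi(G)$, contradicting the choice of $G$.

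The main obstacle is confirming that $G'$ is $C_{\ge 2p}$-free, since $G'$ is produced by adding an edge. A cycle of $G'$ avoiding $y_1y_2$ already lies in $G-uv\subseteq G$ and is therefore short. A cycle through $y_1y_2$ becomes, after deleting the consecutive vertices $y_1,y_2$, a path $P$ whose two endpoints are neighbours of $y_1$ and $y_2$, hence lie in $X$, and which uses neither $y_1y_2$ nor---by Claim \ref{exceptional}, as $y_1y_2$ is now the unique exceptional edge---any other exceptional edge. I would then argue that such a $P$ can never run from one vertex of $X$ to a distinct vertex of $X$ through two or more consecutive $Z$-vertices: a path of at least three $Z$-vertices joining two $X$-vertices would satisfy the hypothesis of Claim \ref{find cycle2} and already force a $C_{\ge 2p}$ in $G$, while the only $2$-vertex $Z$-shortcut is an exceptional edge, which $P$ avoids. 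Hence $P$ meets $Z$ only at single ``hub'' vertices, so it lives in the graph $K_{p-1}\vee I_h$ induced by $X$ together with the (pairwise non-adjacent) hub vertices, in which any path between two vertices of $X$ spans at most $2p-3$ vertices; the cycle through $y_1y_2$ therefore has at most $2p-1$ vertices. The delicate part is precisely this length count together with the check that the freshly created components $C_u,C_v$, whose vertices $u,v$ still see all of $X$, contribute only hub vertices and no additional shortcut.
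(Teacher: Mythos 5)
Your proposal is correct and follows essentially the same strategy as the paper's proof: both trade the exceptional cut-edge $uv$ for a detached exceptional edge (the paper contracts $uv$, deletes a vertex of $Y$, and adds a fresh pair $u'v'$ joined to $X$, while you delete $uv$ and join two vertices of $Y$), both verify the swap is clique-neutral and matching-safe via Claim \ref{matching}, both rule out long cycles by combining Claim \ref{find cycle2} with the fact that $V(H)\setminus\{u,v\}$ sends no edges to $X$, and both conclude by the increase of the coordinate $c(Z_G)$ in $\Phi$. The only compressed spot is your assertion that ``the only $2$-vertex $Z$-shortcut is an exceptional edge'': justifying it requires Claim \ref{2base} applied to the components of $G[Z]$ other than $H$ (which are strict block-cut trees by Claims \ref{exceptional} and \ref{cut-ex}, hence attach to a single vertex of $X$), a step you use implicitly but never cite.
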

	\begin{proof}
		It is sufficient  to prove there is no cut-edge in $H.$ Otherwise, let $uv$ be a cut-edge of $H.$ By Claim \ref{exceptional} and Claim \ref{cut-ex}, there is no other cut-edges in $G[Z]$. Then  all connected components in \( G[Z] \) are strict block-cut trees, except for \( H \). We perform the following operations:
		\begin{enumerate}
			\item Contract \( e = uv \) into a vertex \( w \), and remove a vertex from \( Y \);
			\item Add  a single edge \( u'v' \) into \( G[Z] \) and connect \( u' \) and \( v' \) to every vertex in \( X \).
		\end{enumerate}
		Denote the resultant graph by \( G' \). One can verify that \( \mathcal{N}(K_r, G') = \mathcal{N}(K_r, G) \) and \( \Phi(G') >_{\text{lex}} \Phi(G) \).
		Now, we show that \( G' \) is \( \{C_{\geq 2p}, M_{s+1}\} \)-free, which will cause a contradiction.
		By	 Claim \ref{matching}, we get $\nu(G')=\nu(G)$. So \( G' \) is \( M_{s+1} \)-free.
		Next we  show that \( G' \) is \( C_{\geq 2p} \)-free. Otherwise,  there exists a cycle of length at least $2p$ in $G'$ (Denoted by $Q$).
		By Lemma \ref{contract},  \( Q \) must contain at least a vertex from \( \{u', v'\} \).
		After performing the above operations, the strict block-cut trees remains unchanged. By Claim \ref{2base},  \( Q \) does not contain any vertex in the block-cut trees.
		Recall that each vertex in \( V(H) \setminus \{u, v\} \) is not adjacent to any vertex in \( X .\)
		Thus $V(Q)$ is a subset of $X\cup Y\cup \{w, u', v'\}$. It is easily checked that the induced subgraph of $X\cup Y\cup \{w, u', v'\}$ in $G'$ can not form a copy $Q$ in $G'$, leading to a contradiction. Therefore, \( G' \) is \( C_{\geq 2p} \)-free.
		Thus \( G' \) is \( \{C_{\geq 2p}, M_{s+1}\} \)-free and \( \Phi(G') >_{\text{lex}} \Phi(G) \), which leads to a contradiction.
	\end{proof}

	Let \( H\) be a connected component in \( G[Z] \) with \( |V(H)| \geq 3 \), and  \( B_1, B_2, \ldots, B_\ell \) be the  blocks of \( H\). By Claim~\ref{2base}, suppose \( v_1 \) is the unique vertex in \( X \) adjacent to vertices in \( H \). The following claims  can  be proved similarly as  in Section \ref{section 3}.
	\begin{claim}\label{block size2}
		For any $i \in [\ell]$, the following is true.
		\begin{itemize}
			\item [\rm (1)]$d_H\left(v_1\right) \geq p$;
			\item [\rm (2)]For any vertex $u \in V\left(B_i\right)$, we have $d_{B_i}(u) \geq p-1$;
			\item [\rm (3)]For any vertex \( u \in V(B_i)\setminus N_G(v_1) \), we have \( d_{B_i}(u) \geq p \).
		\end{itemize}
	\end{claim}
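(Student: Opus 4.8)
The plan is to follow the three-part contraction argument of Claim~\ref{block size} almost verbatim, adapting it to the $C_{\geq 2p}$ setting in which Claim~\ref{2base} plays the role of Claim~\ref{base}~(1) and $\Phi$ now takes values in $\mathbb{R}^4$. In each of the three statements I would argue by contradiction: assume a vertex violates the asserted degree bound, then perform a local surgery that contracts an edge incident to the low-degree vertex while compensating by attaching a fresh vertex to $Y$ whose neighbourhood is the clique $X$. Lemma~\ref{contract} guarantees that the contraction preserves $\{C_{\geq 2p}, M_{s+1}\}$-freeness, and each attached $Y$-vertex creates exactly $\binom{p-1}{r-1}$ new copies of $K_r$. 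The entire point is to choose the contracted edge so that the number of destroyed copies of $K_r$ is at most $\binom{p-1}{r-1}$, which keeps $\mathcal{N}(K_r,\cdot)$ from decreasing, and then to read off a strict lexicographic gain in $\Phi$.

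For statement~(1), if $d_H(v_1)\le p-1$ I would contract an edge $e_1\in E(\{v_1\},V(H))$; since $v_1$ lies only in the $(p-1)$-clique $X$ and has at most $p-1$ neighbours inside $H$, this destroys at most $\binom{p-1}{r-1}$ copies of $K_r$, matched by the gain from the new $Y$-vertex. For statement~(2), Lemma~\ref{stability}~(3) together with Claim~\ref{2base} forces an offending vertex $u$ with $d_{B_{i_0}}(u)\le p-2$ to be a cut-vertex of $H$, since its total degree is at least $p$ and its only possible $X$-neighbour is $v_1$; contracting an edge $e_2\in E(\{u\},V(B_{i_0}))$ then costs at most $\binom{p-2}{r-1}+\binom{p-2}{r-2}=\binom{p-1}{r-1}$ copies of $K_r$ by Pascal's rule, again exactly offset. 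Statement~(3) is the same surgery applied to a vertex $u\in V(B_{i_0})\setminus N_G(v_1)$ with $d_{B_{i_0}}(u)\le p-1$ (which is a cut-vertex because all its neighbours lie in $Z$ yet $d_G(u)\ge p$), now losing at most $\binom{p-1}{r-1}$ copies directly.

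After each contraction some vertices of $H$ may drop below degree $p$; exactly as in the proof of Lemma~\ref{stability}, I would repeatedly replace the neighbourhood of any such vertex by $X$, an operation that never decreases $\mathcal{N}(K_r,\cdot)$ and terminates, yielding a graph $G''$ that still satisfies the partition hypotheses of Lemma~\ref{stability} and remains $\{C_{\geq 2p},M_{s+1}\}$-free. One then checks $\mathcal{N}(K_r,G'')\ge \mathcal{N}(K_r,G)$ and, crucially, $\Phi(G'')>_{\text{lex}}\Phi(G)$, contradicting the extremal choice of $G$.

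The main obstacle is the final lexicographic bookkeeping rather than the freeness or clique-count inequalities, which follow routinely from Lemma~\ref{contract} and Pascal's rule. Because statement~(2) only yields the equality $\mathcal{N}(K_r,G'')=\mathcal{N}(K_r,G)$, the contradiction cannot come from the edge count alone when $e(G)$ happens to be preserved; I would have to verify that the surgery strictly increases a later coordinate of $\Phi$ — typically the third, $c(Z_G)+|Y_G|$, since a vertex is effectively transferred from a component of $G[Z]$ into the independent set $Y$ — while holding the earlier coordinates fixed. Simultaneously tracking how each contraction and each subsequent neighbourhood-replacement step affects $e(G)$, $k_3(G)$, and the counts $c(Z_G)$ and $|Y_G|$ is the delicate part, and it is essentially the only place where the argument differs in substance from its Section~\ref{section 3} counterpart, owing to the extra fourth coordinate now carried by $\Phi$.
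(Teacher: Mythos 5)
Your proposal is correct and takes essentially the same approach as the paper, which proves Claim~\ref{block size2} only by remarking that such claims ``can be proved similarly as in Section~\ref{section 3}'' --- that is, by exactly the contraction-plus-new-$Y$-vertex surgery of Claim~\ref{block size} that you reproduce, with Claims~\ref{base2} and~\ref{2base} supplying the unique $X$-neighbour $v_1$, Lemma~\ref{contract} preserving $\{C_{\geq 2p}, M_{s+1}\}$-freeness, and Pascal's rule giving the clique counts. The bookkeeping you flag as the delicate point is in fact immediate: the first three coordinates of the Section~\ref{section 4} map $\Phi$ coincide with the Section~\ref{section 3} map, so any strict lexicographic increase established there carries over verbatim to the four-coordinate $\Phi$.
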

	
	\begin{claim}\label{sim}
		For any $1\leq i\neq j\leq \ell$, there are no vertices $u\in V(B_i)\setminus V(B_j)$ and  $v\in V(B_j)\setminus V(B_i)$ such that both $u$ and $v$ are adjacent to $v_1$.
	\end{claim}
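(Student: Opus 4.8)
The plan is to follow the proof of Claim~\ref{core} almost verbatim, since Claim~\ref{block size2} supplies exactly the same degree information here as Claim~\ref{block size} did there, and in both regimes the construction produces a cycle of length at least $2p$, which is forbidden in each setting. First I would argue by contradiction: assume, without loss of generality, that there exist $u \in V(B_1) \setminus V(B_2)$ and $v \in V(B_2) \setminus V(B_1)$ with $uv_1, vv_1 \in E(G)$. Writing $H' = G[V(H) \cup \{v_1\}]$, let $B$ be the maximal $2$-connected subgraph of $H'$ containing $v_1$.

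Next I would establish that $V(B) \supseteq V(B_1) \cup V(B_2) \cup \{v_1\}$. Since $H$ is connected, there is a $u$-$v$ path inside $H$; together with the edges $uv_1$ and $vv_1$ it forms a cycle through $v_1$ that meets both $B_1$ and $B_2$. As this cycle is $2$-connected and the blocks of $H$ are its maximal $2$-connected subgraphs, the block-cut structure forces all of $B_1$, all of $B_2$, and $v_1$ into the single block $B$. By maximality of $B$, no vertex of $V(H) \setminus V(B)$ is adjacent to $v_1$, so every vertex of $B$ retains the degree guarantee of Claim~\ref{block size2}: a vertex of $V(H) \cap V(B)$ adjacent to $v_1$ has $B$-degree at least $(p-1)+1 = p$, a vertex of $V(H)\cap V(B)$ not adjacent to $v_1$ has $B$-degree at least $p$ by Claim~\ref{block size2}(3), and $d_B(v_1) = d_H(v_1) \ge p$ by Claim~\ref{block size2}(1); hence $\delta(B) \ge p$. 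For the order, Claim~\ref{block size2}(2) gives $|V(B_1)|, |V(B_2)| \ge p$, and since distinct blocks of $H$ overlap in at most one cut-vertex, $|V(B_1) \cup V(B_2)| \ge 2p-1$; adjoining $v_1 \notin V(H)$ yields $|V(B)| \ge 2p$.

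With $\delta(B) \ge p$ and $|V(B)| \ge 2p$ in place, I would apply Lemma~\ref{Dirac} to a longest path $P$ of the connected graph $B$ to obtain $|V(P)| \ge \min\{|V(B)|, 2p+1\} \ge 2p$. If $x, y$ denote the ends of $P$, then maximality of $P$ forces $N(x), N(y) \subseteq V(P)$, whence $d_P(x), d_P(y) \ge \delta(B) \ge p$. Lemma~\ref{Kopylov}, applied to the $2$-connected graph $B$ and the path $P$, then yields a cycle of length at least $\min\{|V(P)|, d_P(x) + d_P(y)\} \ge 2p$, i.e.\ a member of $C_{\ge 2p}$ in $G$, contradicting the $\{C_{\ge 2p}, M_{s+1}\}$-freeness of $G$.

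Since the deduction tracks Claim~\ref{core} step for step, the only delicate point, and the one I would verify most carefully, is that the bounds $\delta(B) \ge p$ and $|V(B)| \ge 2p$ hold simultaneously, because Lemma~\ref{Kopylov} delivers exactly the threshold $2p$ with no room to spare. The order bound in turn rests entirely on the elementary fact that two distinct blocks of a graph meet in at most one vertex, so I expect that simple overlap estimate, rather than any cycle-finding, to be where the bookkeeping needs the most attention.
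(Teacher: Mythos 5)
You follow exactly the route the paper intends: the paper disposes of Claim~\ref{sim} by saying the claims ``can be proved similarly as in Section~\ref{section 3}'', i.e.\ by repeating the proof of Claim~\ref{core}, and your write-up is that proof (maximal $2$-connected subgraph $B$ of $H'$ containing $v_1$, then $\delta(B)\ge p$ and $|V(B)|\ge 2p$, then Lemma~\ref{Dirac} and Lemma~\ref{Kopylov} produce a cycle of length at least $2p$).

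However, the step you yourself single out as delicate --- $V(B)\supseteq V(B_1)\cup V(B_2)\cup\{v_1\}$ --- is not actually established by your justification, and this is a genuine hole (one the paper's own proof of Claim~\ref{core} shares, since there the containment is simply asserted without proof). A cycle that meets a block in a \emph{single} vertex does not pull that block into $B$: if $u$ is a cut-vertex of $H$ and every $u$--$v$ path leaves $B_1$ immediately through $u$, then your cycle intersects $B_1$ only in $\{u\}$, and $B_1$ can be a different block of $H'$ than $B$. Here is a configuration consistent with everything proved before Claim~\ref{sim}: take $p=3$, let $H$ consist of three $K_4$-blocks $B_1,B_m,B_2$ with $B_1\cap B_m=\{u\}$ and $B_m\cap B_2=\{v\}$, and let $v_1$ be adjacent exactly to $u$, $v$ and one further vertex $x\in V(B_m)$. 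All conditions of Claim~\ref{block size2} hold, and the pair $(u,v)$ violates the conclusion of Claim~\ref{sim}; yet the block of $H'$ containing $v_1$ is just $G[V(B_m)\cup\{v_1\}]$, which has $p+2=5$ vertices, and the longest cycle through $V(H)\cup\{v_1\}$ has length $5<2p=6$, so no forbidden cycle exists and the Dirac--Kopylov computation yields nothing. In general, whenever $u$ and $v$ are cut-vertices lying in a common intermediate block of order at most $2p-2$, the purely cycle-theoretic argument cannot close the case; such configurations would have to be excluded by the extremal choice of $G$ (the lexicographic maximality of $\Phi$), which neither you nor the paper invokes in this proof. So your overlap estimate $|V(B_1)\cup V(B_2)|\ge 2p-1$ is fine \emph{once the containment is known}, but the containment itself is exactly what can fail, and your appeal to ``the block-cut structure forces all of $B_1$, all of $B_2$, and $v_1$ into the single block $B$'' does not repair it.
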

	
	\begin{claim}\label{simi}
		\( H'=G[V(H)\cup\{v_1\}]\) is a strict block-cut tree, and each vertex \( v \) in \( H'\) has a degree of at least \( p \) within the block  containing \( v \).
	\end{claim}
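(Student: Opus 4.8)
The plan is to derive Claim~\ref{simi} from Claim~\ref{block size2} and Claim~\ref{sim} in exactly the way Claim~\ref{min deg} was obtained from Claim~\ref{block size} and Claim~\ref{core} in Section~\ref{section 3}. Recall that by Claim~\ref{base2} the component $H$ is a strict block-cut tree with $2$-connected blocks $B_1,\dots,B_\ell$, and by Claim~\ref{2base} the vertex $v_1$ is the only vertex of $X$ sending edges into $H$. Thus $H'=G[V(H)\cup\{v_1\}]$ is obtained from $H$ by attaching the single extra vertex $v_1$ together with its $d_H(v_1)\ge p$ incident edges, and the whole task is to understand how this attachment changes the block structure.

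First I would pin down where $v_1$ attaches. Using Claim~\ref{sim}, any two neighbours of $v_1$ in $H$ must lie in a common block: if $w,w'\in N_H(v_1)$ had no common block, then in the block-cut tree of $H$ one could choose blocks $B_i\ni w$, $B_j\ni w'$ with $w\notin B_j$ and $w'\notin B_i$, contradicting Claim~\ref{sim}. Since two distinct blocks meet in at most one cut-vertex while $d_H(v_1)\ge p\ge 3$ forces at least three distinct neighbours, the Helly property of subtrees of the block-cut tree forces these pairwise common blocks to coincide, so all of $N_H(v_1)$ lies in one block $B_{i_0}$. I would then check that $H'$ is strict, i.e. that it has no cut-edge: no edge of $H$ becomes a cut-edge of $H'$, since each such edge still lies inside a $2$-connected block, and no edge $v_1w$ is a cut-edge because $v_1$ has a second neighbour $w'$ sharing a $2$-connected block with $w$, so $w$ remains reachable from $v_1$ after deleting $v_1w$. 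Consequently the blocks of $H'$ are $B'_{i_0}:=G[V(B_{i_0})\cup\{v_1\}]$, which is $2$-connected because $v_1$ has at least two neighbours in the $2$-connected graph $B_{i_0}$, together with the unchanged $B_i$ for $i\ne i_0$.

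For the degree statement I would argue block by block. The vertex $v_1$ satisfies $d_{B'_{i_0}}(v_1)=d_H(v_1)\ge p$ by Claim~\ref{block size2}(1). Any vertex $u$ lying in a block but not adjacent to $v_1$ has block-degree at least $p$ directly from Claim~\ref{block size2}(3). Finally, a neighbour $u$ of $v_1$ lies in $B_{i_0}$, hence in $B'_{i_0}$, where the edge $uv_1$ contributes one extra unit, so that $d_{B'_{i_0}}(u)=d_{B_{i_0}}(u)+1\ge (p-1)+1=p$ by Claim~\ref{block size2}(2).

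The delicate point, and where I expect to spend the most care, is the bookkeeping at a cut-vertex $u$ that is simultaneously a neighbour of $v_1$ and shared between $B_{i_0}$ and some other block $B_i$. In $B'_{i_0}$ such a $u$ has degree $\ge p$ as above, but the edge $uv_1$ is counted only in $B'_{i_0}$, so within $B_i$ Claim~\ref{block size2}(2) a priori yields only $d_{B_i}(u)\ge p-1$. The resolution I would push through is to observe that every vertex of $B_i$ other than $u$ is a non-neighbour of $v_1$, because $N_H(v_1)\subseteq B_{i_0}$ and $B_i\cap B_{i_0}=\{u\}$, and hence already has block-degree $\ge p$ by Claim~\ref{block size2}(3); combined with the fact that $u$ attains degree $\ge p$ in the block $B'_{i_0}$ that actually contains its edge to $v_1$, this supplies exactly the degree information needed for the Hamiltonicity analysis in the next claim. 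Checking that this accounting is consistent with the way the degree bound is invoked afterwards is the main obstacle; everything else is a routine transcription of the Section~\ref{section 3} argument.
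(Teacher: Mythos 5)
Your reconstruction of the intended argument is faithful to the paper, which offers no written proof here: it declares the claim provable "similarly as in Section \ref{section 3}", where the analogous Claim \ref{min deg} is said to follow immediately from Claims \ref{block size} and \ref{core}. Your version spells that out correctly: Claim \ref{sim} plus the Helly property of the block tree localizes $N_H(v_1)$ inside a single block $B_{i_0}$, which yields the strictness of $H'$ and the identification of its blocks as $B'_{i_0}=G[V(B_{i_0})\cup\{v_1\}]$ together with the unchanged $B_i$, $i\neq i_0$; the degree bounds for $v_1$, for neighbors of $v_1$ inside $B'_{i_0}$, and for non-neighbors of $v_1$ then come from Claim \ref{block size2}(1)--(3) exactly as you state. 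You also went beyond the paper in isolating the genuinely delicate configuration: a cut-vertex $u\in V(B_{i_0})\cap V(B_i)$ with $uv_1\in E(G)$.

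However, your handling of that configuration is a genuine gap, not a resolution. The claim, both as stated and as it is invoked in the very next claim (whose proof uses $\delta(B_i')\geq p$ for \emph{every} block $B_i'$ of $H'$), requires degree at least $p$ in each block containing the vertex; you establish only $d_{B_i}(u)\geq p-1$ and then assert, without verification, that degree $\geq p$ in the single block $B'_{i_0}$ suffices for the downstream Hamiltonicity analysis. It does not, at least not verbatim: if $d_{B_i}(u)=p-1$ and a longest path $P$ in $B_i$ has $u$ as an end, then Lemma \ref{Kopylov} only produces a cycle of length at least $\min\{|V(P)|,2p-1\}$, which is perfectly compatible with $C_{\geq 2p}$-freeness even when $|V(B_i)|\geq 2p$, so neither the size bound $|V(B_i')|\leq 2p-1$ nor Hamiltonicity follows. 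The gap can be closed in either of two ways, but one of them must actually be written. (i) Rule out $d_{B_i}(u)\leq p-1$ by rerunning the contraction argument from the proof of Claim \ref{block size2}(3) on an edge $uw$ with $w\in V(B_i)$: this works precisely because of your own observation that no vertex of $V(B_i)\setminus\{u\}$ is adjacent to $v_1$, so $N(u)\cap N(w)\subseteq V(B_i)$ has at most $p-2$ vertices, the loss is at most $\binom{p-2}{r-1}+\binom{p-2}{r-2}=\binom{p-1}{r-1}$ $r$-cliques, recouped by adding a vertex to $Y$, and $\Phi$ increases lexicographically, contradicting the choice of $G$. (ii) Keep your weaker bound but repair the next claim's proof by a rotation: since $d_{B_i}(u)\geq p-1\geq 2$, a longest path ending at $u$ has a chord from $u$ to some $x_j$ with $j\geq 2$, so it can be re-rooted into a longest path on the same vertex set whose two ends both avoid $u$; then the end-degree sum is at least $2p$ and Lemma \ref{Kopylov} forces a cycle of length at least $2p$ whenever $|V(B_i)|\geq 2p$, a contradiction. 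As submitted, the proposal proves a statement strictly weaker than the claim and defers exactly the step that needs an argument.
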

	
	\begin{claim}
		Let \( B'_1, B'_2, \ldots, B'_\ell \) be the  blocks of \(H' .\) For any $i \in [\ell]$,  \( B_i' \) is a Hamiltonian graph with \( p+1 \leq |B_i'| \leq 2p-1 .\)
	\end{claim}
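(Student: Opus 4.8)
The plan is to mirror the argument of Claim \ref{H}, adjusting the circumference threshold from $2p-1$ to $2p$. I would first record the two cheap facts. For the lower bound on the order, recall from Claim \ref{simi} that $H'$ is a strict block-cut tree in which every vertex has degree at least $p$ inside the block containing it; hence $\delta(B_i') \geq p$ and therefore $|V(B_i')| \geq p+1$. For the upper bound and Hamiltonicity, the key point is that $B_i' \subseteq G$ is $C_{\geq 2p}$-free and $2$-connected (being a block of a strict block-cut tree), which lets me apply Lemmas \ref{Kopylov} and \ref{Dirac}.

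Next I would take a longest path $P$ in $B_i'$ with ends $u$ and $v$. Because $P$ is longest, all neighbors of $u$ and of $v$ lie on $P$, so $d_P(u) = d_{B_i'}(u) \geq p$ and $d_P(v) \geq p$. Applying Lemma \ref{Kopylov} to $P$ produces a cycle $C$ in $B_i'$ of length at least $\min\{|V(P)|,\, d_P(u)+d_P(v)\} \geq \min\{|V(P)|, 2p\}$. Since $B_i'$ is $C_{\geq 2p}$-free, every cycle in it has length at most $2p-1$, so $|V(C)| \leq 2p-1 < 2p$; this forces $\min\{|V(P)|, 2p\} = |V(P)|$, giving both $|V(P)| \leq 2p-1$ and $|V(C)| \geq |V(P)|$.

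Then I would feed the longest path back into Lemma \ref{Dirac}, which yields $|V(P)| \geq \min\{|V(B_i')|,\, d(u)+d(v)+1\} \geq \min\{|V(B_i')|, 2p+1\}$. Since $|V(P)| \leq 2p-1 < 2p+1$, the minimum must equal $|V(B_i')|$, so $|V(P)| \geq |V(B_i')|$; combined with the trivial inequality $|V(P)| \leq |V(B_i')|$, this shows $P$ is a Hamiltonian path and $|V(B_i')| = |V(P)| \leq 2p-1$. Finally $|V(C)| \geq |V(P)| = |V(B_i')| \geq |V(C)|$ forces $C$ to be a Hamiltonian cycle, so $B_i'$ is Hamiltonian with $p+1 \leq |V(B_i')| \leq 2p-1$.

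I expect no serious obstacle here, as the structure is identical to that of Claim \ref{H}; the only point requiring care is resolving the two minima produced by Lemmas \ref{Kopylov} and \ref{Dirac} in the correct direction, which is handled by first using the $C_{\geq 2p}$-freeness to exclude cycles of length $2p$ or more, and only then invoking Dirac's bound to pin down $|V(B_i')|$.
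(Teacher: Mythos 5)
Your proposal is correct and follows essentially the same argument as the paper: both use Claim \ref{simi} for $\delta(B_i')\geq p$, take a longest path, and combine Lemma \ref{Kopylov} with Lemma \ref{Dirac} to force a Hamiltonian cycle, with $|V(B_i')|\leq 2p-1$ from $C_{\geq 2p}$-freeness. Your write-up is in fact slightly more careful than the paper's in spelling out why each minimum resolves to the desired term.
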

	\begin{proof}
		According to Claim \ref{simi}, we have 	$|V(B_i')|\geq p+1$ and $\delta(B_i')\geq p$ for any  $i \in [\ell]$.
		Let \( P\) be the longest path  in \( B_i'\),  $u,v$ be two ends of $P$. Since $P$ is the longest path in $B_i'$,  $d_{P}(u)\geq p$ and $d_{P}(v)\geq p$.
		By Lemma \ref{Kopylov}, \( B_i'\) contains a cycle $C$ with $|V(C)|\geq \min \left\{|V(P)|, 2p \right\}= |V(P)|$. Using Lemma \ref{Dirac}, we get
		$$
		|V(B_i')| \geq |V(C)|\geq |V(P)| \geq \min \{|V(B_i')|, 2p+1\} = |V(B_i')|,
		$$
		which implies that \( B_i' \) contains a Hamiltonian cycle $C$. Moreover,  $|V(B_i')|\leq 2p-1$ as \( B_i' \) is $C_{\geq 2p}$-free.
	\end{proof}
	
	Using the similar proof as  Claim \ref{clique}, we get the following claim.
	\begin{claim}\label{111}
		$H'$ is a single clique with  $p+1\leq |V(H')|\leq 2p-1$.
	\end{claim}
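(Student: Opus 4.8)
The plan is to establish Claim~\ref{111} by following the template set up in the proof of Claim~\ref{clique}, which handled the analogous situation for $C_{\geq 2p-1}$. The statement has two parts: first that $H'$ is a single block (equivalently, $\ell=1$), and second that this block is in fact a clique of the stated size. Since the previous claim already shows each block $B_i'$ is Hamiltonian with $p+1\le |V(B_i')|\le 2p-1$, the size bound will follow immediately once we know $H'$ is a single block, because that block equals $H'$.

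First I would prove $H'$ is a single block by contradiction, assuming $\ell\ge 2$. The key move, exactly as in Claim~\ref{clique}, is to transform $H'$ into a block-cut star $S'$ with center vertex $v_1$, keeping the same blocks $B_1',\dots,B_\ell'$, and calling the resulting ambient graph $G'$. Since each block is Hamiltonian with order at most $2p-1$, the star $S'$ remains $C_{\geq 2p}$-free: any cycle must lie within a single block, which is too small. The main obstacle, and the part requiring care, is verifying that $G'$ stays $M_{s+1}$-free, i.e.\ $\nu(G')\le\nu(G)$. This is precisely where Lemma~\ref{star} does the work: it guarantees $\nu(H')\ge\nu(S')$ for strict block-cut trees with Hamiltonian blocks. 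I would split into the two cases mirroring Claim~\ref{clique}: when all blocks are odd, Lemma~\ref{near-perfect matching} gives near-perfect matchings of both $H'$ and $S'$ excluding $v_1$, so the matching numbers match additively across the component decomposition; when some block has even order, the center vertex $v_1$ is forced into every maximum matching of both $S'$ and $G'\setminus S'$, and combining Lemma~\ref{star} with the additivity of matching number over the rest of $G[Z]$ yields $\nu(G')\le\nu(G)$. Hence $G'$ is $\{C_{\geq 2p}, M_{s+1}\}$-free with $\mathcal{N}(K_r,G')=\mathcal{N}(K_r,G)$.

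The contradiction is then obtained from the lexicographic optimality of $\Phi$. Splitting the connected component $H$ into the star configuration strictly increases the number of connected components $c(Z_{G'})$ while preserving $e(G')$, $k_3(G')$, and $c(Z_{G'})+|Y_{G'}|$ (since no vertices move between $Y$ and $Z$), so $\Phi(G')>_{\text{lex}}\Phi(G)$ in the fourth coordinate. This contradicts our choice of $G$, forcing $\ell=1$. Therefore $H'$ is a single block, and by the previous claim it is Hamiltonian with $p+1\le |V(H')|\le 2p-1$.

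Finally, to upgrade "single block" to "single clique," I would invoke the extremality of $G$ with respect to $e(G)$. One can add any missing edges inside $H'$ to make it complete: adding chords to a Hamiltonian graph of order at most $2p-1$ cannot create a cycle of length $2p$ or larger (the vertex set is too small), and adding edges inside a component of $G[Z]$ does not affect $\nu(G)$ beyond what the component already contributes, so $M_{s+1}$-freeness is preserved. Since $G$ was chosen to have the maximum number of edges among extremal graphs, $H'$ must already be complete, i.e.\ a clique. Combined with the size bound, this gives that $H'$ is a single clique with $p+1\le |V(H')|\le 2p-1$, completing the proof.
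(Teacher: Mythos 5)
Your proof is correct and follows essentially the same route as the paper, whose proof of Claim \ref{111} is simply a pointer back to Claim \ref{clique}: the block-cut-star transformation at $v_1$, the two matching cases handled via Lemma \ref{near-perfect matching} and Lemma \ref{star}, and edge-maximality of $G$ to upgrade the single Hamiltonian block to a clique. One minor bookkeeping slip: splitting $H$ into several components increases $c(Z_{G'})$ while $|Y_{G'}|$ is unchanged, so it is the third coordinate $c(Z_G)+|Y_G|$ of $\Phi$ that strictly increases (not, as you state, staying equal with the increase occurring only in the fourth coordinate); the conclusion $\Phi(G')>_{\text{lex}}\Phi(G)$ is unaffected.
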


	We continue to refine the structure of  $G$ based on the number of exceptional edges in \( G[Z] \).
	\noindent
	\textbf{Case 1.} There is no exceptional edges in $G[Z]$.
	\begin{claim}\label{case1.1}
		Every even block (if exists) in $G[Z]$ is a $(2p-2)$-clique.
	\end{claim}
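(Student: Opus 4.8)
The plan is to exploit the fact that in Case 1 the component structure of $G[Z]$ has already been pinned down by the earlier claims. Since there are no exceptional edges, every connected component of $G[Z]$ has at least three vertices, and by Claim \ref{base2} and Claim \ref{111} each such component $H$ is a single clique $K_m$ fully joined to $v_1$, with $p \leq m \leq 2p-2$; in particular every block of $G[Z]$ is one of these cliques. Suppose toward a contradiction that some even block $K_m$ has $m \leq 2p-4$, say $m = 2j$ with $j \leq p-2$. First I would perform the natural swap: delete one vertex $y$ from the (large) set $Y$ and insert it into this clique, turning $K_{2j}$ into $K_{2j+1}$; call the result $G'$. Because $m \geq p$ we still have $|Y| \geq p-1$ after the deletion, so the partition $V(G') = X \cup Y \cup Z$ is preserved, the new vertex has degree $2j+1 \geq p$ with neighbours only in $X \cup Z$, and $G'[Z \cup \{v_1\}]$ is again a block-cut star whose largest block is $K_{2j+2}$ with $2j+2 \leq 2p-2 < 2p$, so $G'$ stays $C_{\geq 2p}$-free.

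The crucial, and most delicate, step is checking that this swap does not raise the matching number. Here I would use the clean identity $\nu(G) = (p-1) + \sum_i \lfloor m_i/2 \rfloor$, where the $m_i$ are the orders of the cliquial components of $G[Z]$. The lower bound comes from taking a maximum matching inside each $K_{m_i}$ and matching all $p-1$ vertices of $X$ into $Y$; the upper bound follows because deleting the cut-set $X$ leaves $Y$ isolated and the cliques $K_{m_i}$ untouched, so $\nu(G) \leq |X| + \nu(G - X) = (p-1) + \sum_i \lfloor m_i/2 \rfloor$. Since $\lfloor 2j/2 \rfloor = \lfloor (2j+1)/2 \rfloor = j$, replacing $K_{2j}$ by $K_{2j+1}$ leaves every term $\lfloor m_i/2 \rfloor$ unchanged, whence $\nu(G') = \nu(G) \leq s$ and $G'$ is $M_{s+1}$-free. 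This invariance is precisely why an odd clique of the same matching cost stores one extra vertex for free, and it is the heart of the argument.

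Finally I would compare the two weights. Moving $y$ destroys the $\binom{p-1}{r-1}$ copies of $K_r$ it formed with $X$ and creates $\binom{2j+1}{r-1}$ copies with its new clique together with $v_1$; as $2j+1 \geq p+1 > p-1$, this gives $\mathcal{N}(K_r, G') \geq \mathcal{N}(K_r, G)$, so $G'$ is again extremal and lies in $\mathcal{G}$. On the edge count the swap loses the $p-1$ edges from $y$ to $X$ and gains $2j+1$ edges to $K_{2j} \cup \{v_1\}$, a net change of $2j+2-p > 0$ since $m = 2j \geq p > p-2$; hence $e(G') > e(G)$ and therefore $\Phi(G') >_{\text{lex}} \Phi(G)$, contradicting the choice of $G$. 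This forces $m = 2p-2$ for every even block. The one point demanding genuine care is the matching computation, because a priori freeing up $v_1$ once an odd clique is introduced might have increased $\nu$; the deletion/vertex-cover bound above is exactly what rules this out.
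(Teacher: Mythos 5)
Your proposal is correct and takes essentially the same route as the paper: the paper's proof performs exactly your swap (changing the neighborhood of a vertex $w\in Y$ to $V(K_x)\cup\{v_1\}$, turning the even clique into an odd one), observes that the number of $r$-cliques changes by $\binom{x+1}{r-1}-\binom{p-1}{r-1}\geq 0$ while $e(G')>e(G)$, and derives the same contradiction with the lexicographic maximality of $\Phi(G)$. The only difference is that you verify $\{C_{\geq 2p},M_{s+1}\}$-freeness of $G'$ in detail (via the identity $\nu(G)=(p-1)+\sum_i\lfloor m_i/2\rfloor$ and the block-cut-star structure), steps the paper compresses into ``Clearly''.
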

	\begin{proof}
		Suppose to the contrary that there exists an even block $K_x$ in  $G[Z]$ with $p\leq x\leq 2p-4$, we  modify $G$ by changing the neighborhood of a vertex $w \in Y$ to $V(K_x)\cup \{v_1\}$.
		Let \( G' \) be the resultant graph. Clearly, \( G' \) is \( \{C_{\geq 2p}, M_{s+1}\} \)-free.
		During the operation, the number of $r$-cliques increase $\binom{x+1}{r-1} -\binom{p-1}{r-1}\geq0,$ which implies that  $\mathcal{N}(K_r, G') \geq \mathcal{N}(K_r, G).$ Moreover, we have $e(G')>e(G)$, which leads to a contradiction.
	\end{proof}
	\begin{claim}\label{case1.2}
		In  $G[Z]$, there exists at most one odd block with an order smaller than $2 p-3$. Moreover, if there are multiple odd blocks and at least one even block, then all odd blocks have an order of  $2p-3$.
	\end{claim}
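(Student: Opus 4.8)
The plan is to treat the claim as a discrete optimization over the orders $m_1,\dots,m_h$ of the cliques comprising $G[Z]$, in the spirit of Claim~\ref{structure} but now exploiting Claim~\ref{case1.1}. Since in Case~1 each such clique is attached to the single cut vertex $v_1$, so that $G[Z\cup\{v_1\}]$ is a block-cut star, the block-cut-star structure (cf. Lemma~\ref{star}) yields $\nu(G)=(p-1)+\sum_i\lfloor m_i/2\rfloor$. The engine of the proof is the following exchange principle: replacing some of these cliques by others while keeping $\sum_i\lfloor m_i/2\rfloor$ fixed preserves $M_{s+1}$-freeness (any surplus or deficit of vertices is absorbed into, or taken from, the large independent set $Y$, each transferred vertex changing $\mathcal{N}(K_r)$ by $\binom{p-1}{r-1}$), and it preserves $C_{\geq 2p}$-freeness as long as no clique is enlarged beyond order $2p-2$. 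A replacement that strictly increases $\mathcal{N}(K_r)$ contradicts the extremality of $G$; one that leaves $\mathcal{N}(K_r)$ unchanged but strictly increases $e(G)$ (via Lemma~\ref{ch} with $r=2$) contradicts the lexicographic maximality of $\Phi(G)$.

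For the first assertion I argue by contradiction. Suppose two odd blocks $K_x,K_y$ have order below $2p-3$, say $p\le y\le x\le 2p-5$. Replacing $K_x\cup K_y$ by $K_{x+2}\cup K_{y-2}$ keeps $\sum_i\lfloor m_i/2\rfloor$ fixed (so $\nu(G)$ is unchanged) and keeps the largest block of order at most $2p-3$, hence the new graph $G'$ is still $\{C_{\geq 2p},M_{s+1}\}$-free. By Lemma~\ref{ch} the more unbalanced partition dominates, $\binom{x+3}{r}+\binom{y-1}{r}\ge\binom{x+1}{r}+\binom{y+1}{r}$, so $\mathcal{N}(K_r,G')\ge\mathcal{N}(K_r,G)$, with a strict gain in the count of $K_r$ (or, in the boundary regime $x+3<r$, a strict gain in $e(G')$). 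Either way we contradict the choice of $G$, which forces at most one odd block of order smaller than $2p-3$.

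The second assertion is the crux. Assume $G[Z]$ has at least two odd blocks, at least one even block, and a (unique, by the first assertion) odd block $K_y$ with $y<2p-3$. Then the other odd blocks all have order $2p-3$, so a block $K_{2p-3}$ is present, and by Claim~\ref{case1.1} the even block is $K_{2p-2}$. I consider two complementary exchanges, each of which keeps $\sum_i\lfloor m_i/2\rfloor$ (hence $\nu(G)$) fixed by moving a single vertex to or from $Y$: Move~1 replaces $K_{2p-3}\cup K_y$ by $K_{2p-2}\cup K_{y-2}$, and Move~2 replaces $K_{2p-2}\cup K_y$ by $K_{2p-3}\cup K_{y+2}$. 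Writing the induced changes in $\mathcal{N}(K_r)$ as $\Delta_1,\Delta_2$, the identity $\binom{m+1}{r}-\binom{m-1}{r}=\binom{m}{r-1}+\binom{m-1}{r-1}$ together with the $Y$-vertex accounting gives
\[
\Delta_1=\binom{2p-2}{r-1}+\binom{p-1}{r-1}-\binom{y}{r-1}-\binom{y-1}{r-1},\qquad
\Delta_2=\binom{y+2}{r-1}+\binom{y+1}{r-1}-\binom{2p-2}{r-1}-\binom{p-1}{r-1}.
\]
The point is that the two problematic terms cancel on summation, leaving
\[
\Delta_1+\Delta_2=\binom{y+2}{r-1}+\binom{y+1}{r-1}-\binom{y}{r-1}-\binom{y-1}{r-1}\ \ge\ 0 .
\]
Hence at least one of $\Delta_1,\Delta_2$ is positive, and the corresponding move produces a $\{C_{\geq 2p},M_{s+1}\}$-free graph with strictly more copies of $K_r$ (with the boundary large-$r$ case again settled through the earlier coordinates of $\Phi$), contradicting the extremality of $G$. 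Note that Move~1 consumes the spare $K_{2p-3}$ while Move~2 consumes the even $K_{2p-2}$; this is exactly where both hypotheses of the ``moreover'' are used. Therefore no such $K_y$ exists and every odd block has order $2p-3$.

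The routine care lies in the matching bookkeeping: each exchange must be checked to leave $\sum_i\lfloor m_i/2\rfloor$ unchanged (so $\nu(G)\le s$ persists) and to leave every block of order at most $2p-2$ (so no cycle of length $\ge 2p$ appears), which here follows from $y+2\le 2p-3$. The genuinely delicate feature, and the main obstacle, is that in the ``moreover'' part \emph{neither} Move~1 nor Move~2 improves $G$ for all choices of $p,y,r$ — each can individually decrease $\mathcal{N}(K_r)$ — so the argument cannot rest on a single exchange. It succeeds only because the harmful terms annihilate in $\Delta_1+\Delta_2$, so that having both a spare $K_{2p-3}$ and an even $K_{2p-2}$ simultaneously available guarantees that one of the two moves is profitable.
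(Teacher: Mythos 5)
Your proposal is correct and is essentially the paper's own proof: the first assertion uses the identical exchange $K_x\cup K_y\to K_{x+2}\cup K_{y-2}$ with Lemma~\ref{ch}, and your Moves~1 and~2 are exactly the paper's two operations (adding/removing a $Y$-vertex while trading between $K_{2p-3}$, $K_{2p-2}$ and the small odd block), with your observation that $\Delta_1+\Delta_2$ telescopes to a non-negative quantity being precisely the paper's step of summing its inequalities \eqref{4} and \eqref{5} and contradicting convexity. Even your fallback for the degenerate large-$r$ regime (appealing to the edge coordinate of $\Phi$) matches the paper's reduction to the $r=2$ instance of Lemma~\ref{ch}, so the two arguments coincide in substance, not just in outline.
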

	\begin{proof}
		Assume that $K_x$ and $K_y$ are two maximal cliques of odd order in $G[Z]$ with $p \leq y \leq x \leq 2 p-5$. We can replace $K_x \cup K_y$ with $K_{x+2}\cup K_{y-2}$ without increasing $\nu(G)$ or creating a cycle in $C_{\geq 2p}$. Let the resultant graph be $G'$. By Lemma \ref{ch}, we have $\binom{x+2}{r}+\binom{y-2}{r}\geq \binom{x}{r}+\binom{y}{r}$. Then we have  $\mathcal{N}(K_r, G') \geq \mathcal{N}(K_r, G)$ and $e(G')>e(G)$, a contradiction. The first statement holds.
		
		For the second statement, suppose there are multiple odd blocks and at least one even block. If the smallest odd block has an order of $2 t-1$, where $\frac{p+1} {2} \leq t \leq p-2$, we claim that
		\begin{align}
			\binom{2 p-1}{r}+\binom{2 t}{r}+\binom{p-1}{r-1} \geq\binom{ 2 p-2}{r}+\binom{2 t+2}{r},\label{4}
		\end{align}
		and
		\begin{align}
			\binom{2 p-2}{r}+\binom{2 t}{r} \geq\binom{ 2 p-1}{r}+\binom{2 t-2}{r}+\binom{p-1}{r-1} .\label{5}
		\end{align}
		Otherwise,
		\begin{itemize}
			\item   if $\binom{2 p-1}{r}+\binom{2 t}{r}+\binom{p-1}{r-1} <\binom{ 2 p-2}{r}+\binom{2 t+2}{r}$, then we remove a vertex from $Y$ and replace $K_{2 p-2}\cup K_{2 t-1}$ with $K_{2 p-3} \cup K_{2 t+1}$ in $G[Z]$;
			\item  if$\binom{2 p-2}{r}+\binom{2 t}{r} <\binom{ 2 p-1}{r}+\binom{2 t-2}{r}+\binom{p-1}{r-1}$, then we add a vertex to $Y$ and replace $K_{2 p-3}\cup  K_{2 t-1}$ with $K_{2 p-2}\cup K_{2 t-3}$ in $G[Z]$.
		\end{itemize}
		Neither operation increases $\nu(G)$ or creates a cycle in $C_{\geq 2p}$, but they do increase the number of $r$-cliques, a contradiction. Combining \eqref{4} and \eqref{5} , we obtain $2\binom{2 t}{r} \geq\binom{ 2 t-2}{r}+\binom{2 t+2}{r}.$
		In particular,   $2\binom{2 t}{2} \geq\binom{ 2 t-2}{2}+\binom{2 t+2}{2},$ which contradicts $2\binom{2 t}{2}<\binom{2 t-2}{2}+$ $\binom{2 t+2}{2}$ from Lemma \ref{ch}.
	\end{proof}
	\begin{claim}\label{case1.3}
		In $G[Z]$, the number of  blocks which are \( (2p-3) \)-cliques is at most \( p-2 \) .	\end{claim}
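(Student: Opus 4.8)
The plan is to rule out $p-1$ blocks of the form $K_{2p-3}$ by an exchange that keeps $G$ in $\mathcal{G}$ but strictly increases the number of edges, contradicting the lexicographic maximality of $\Phi(G)$. Since we are in Case 1, $G[Z]$ has no exceptional edge, so by Claim \ref{111} every connected component of $G[Z]$ is a clique $K_{n_j}$ with $p\le n_j\le 2p-2$, each joined to the single vertex $v_1\in X$. First I would record the matching number: because $|Y|$ is large one can saturate $X$ into $Y$ and match every clique internally, giving $\nu(G)=(p-1)+\sum_j\lfloor n_j/2\rfloor$, and no larger matching exists because the components meet the rest of $G$ only at the cut vertex $v_1$. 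In particular $M_{s+1}$-freeness is equivalent to $\sum_j\lfloor n_j/2\rfloor\le s-p+1$.

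Assume for contradiction that at least $p-1$ components of $G[Z]$ are copies of $K_{2p-3}$. I would build $G'$ by deleting $p-1$ of these $K_{2p-3}$'s, inserting $p-2$ copies of $K_{2p-2}$ (again joined to $v_1$), and adding $p-1$ new vertices to $Y$ with neighborhood exactly $X$; a direct count of vertices shows $|V(G')|=n$. Since $K_{2p-3}$ and $K_{2p-2}$ contribute $p-2$ and $p-1$ respectively to the internal matching, the sum $\sum_j\lfloor n_j/2\rfloor$ is unchanged, whence $\nu(G')=\nu(G)\le s$ and $G'$ is $M_{s+1}$-free. Moreover every block of $G'$ is a clique on at most $2p-1$ vertices meeting the rest only at $v_1$, while the $K_{p-1}\vee I_{|Y|}$ part has circumference $2p-2$, so $G'$ is $C_{\ge 2p}$-free, and hence $G'\in\mathcal{G}$ once the clique count is controlled.

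The quantitative heart is the comparison of $K_r$-counts. Using that every $r$-clique lies either in $X\cup\{y\}$ for a single $y\in Y$ or inside some $K_{n_j}\cup\{v_1\}=K_{n_j+1}$, one has $\mathcal{N}(K_r,G)=\binom{p-1}{r}+|Y|\binom{p-1}{r-1}+\sum_j\binom{n_j+1}{r}$, so the exchange changes the count by
\[
\mathcal{N}(K_r,G')-\mathcal{N}(K_r,G)=(p-2)\binom{2p-1}{r}+(p-1)\binom{p-1}{r-1}-(p-1)\binom{2p-2}{r}.
\]
I would prove this is nonnegative for all $r\ge 2$: Pascal's identity $\binom{2p-1}{r}=\binom{2p-2}{r}+\binom{2p-2}{r-1}$ reduces it to $(p-2)\binom{2p-2}{r-1}+(p-1)\binom{p-1}{r-1}\ge\binom{2p-2}{r}$, which for $r\ge 3$ follows from $\binom{2p-2}{r}=\tfrac{2p-1-r}{r}\binom{2p-2}{r-1}\le(p-2)\binom{2p-2}{r-1}$ (as $\tfrac{2p-1-r}{r}\le p-2$ whenever $r\ge\tfrac{2p-1}{p-1}$, which holds for $r\ge 3$ and $p\ge 3$), and is verified directly for $r=2$. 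Hence $\mathcal{N}(K_r,G')\ge\mathcal{N}(K_r,G)$, forcing equality, so $G'\in\mathcal{G}$; and evaluating the same difference at $r=2$ gives $e(G')-e(G)=(p-1)(p-2)>0$, so $\Phi(G')>_{\text{lex}}\Phi(G)$, the desired contradiction.

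The step I expect to be most delicate is the clique-count inequality, which must hold uniformly in $r$ (including the large-$r$ range where several binomials vanish), together with the careful justification that $\nu(G)=(p-1)+\sum_j\lfloor n_j/2\rfloor$ is exact both before and after the exchange, so that $M_{s+1}$-freeness is genuinely preserved rather than merely not obviously violated.
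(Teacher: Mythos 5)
Your proof is correct and is essentially the paper's own argument: the same exchange replacing $(p-1)K_{2p-3}$ by $(p-2)K_{2p-2}$ plus $p-1$ new vertices in $Y$, the same clique-count difference $(p-2)\binom{2p-1}{r}+(p-1)\binom{p-1}{r-1}-(p-1)\binom{2p-2}{r}$, and the same contradiction via $e(G')>e(G)$ against the lexicographic maximality of $\Phi(G)$. The only differences are cosmetic — you verify the binomial inequality via Pascal's identity and a ratio bound where the paper uses a direct ratio computation, and you spell out the matching-number formula $\nu(G)=(p-1)+\sum_j\lfloor n_j/2\rfloor$ that the paper treats as evident.
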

	\begin{proof}
		Suppose to the contrary that there exist $p-1$  blocks which are \( (2p-3) \)-cliques in  $G[Z]$. We replace the copy of $(p-1)K_{2 p-3}$ in $G[Z]$ with $(p-2) K_{2 p-2}$ and add $p-1$ isolated vertices to $Y$. Denote by $G^{\prime}$ the resulting graph. During the transition from $G$ to $G^{\prime}$,  we do not increase $\nu(G)$ or create a cycle in $C_{\geq 2p}$. Moreover, we remove $(p-1)\binom{2 p-2}{r}$ $r$-cliques and add $(p-2)\binom{2 p-1}{r} +(p-1)\binom{p-1}{r-1}$ $r$-cliques. Note for $r\geq 3$, it follows that
		\begin{align*}
			\frac{(p-2)\binom{2 p-1}{r} +(p-1)\binom{p-1}{r-1}}{(p-1)\binom{2 p-2}{r}}
			>\frac{(p-2)\binom{2 p-1}{r}}{(p-1)\binom{2 p-2}{r}}
			=\frac{2 p^2-5 p+2}{2 p^2-(r+3)p+r+1}
			>1,
		\end{align*}
		where the last step is because $2 p^2-5 p+2-2 p^2+(r+3)p-r-1=(r-2)p-(r-1)>0.$\\
		For $r=2$, one can deduce
		\begin{align*}
			&(p-2)\binom{2 p-1}{2} +(p-1)\binom{p-1}{1}-(p-1)\binom{2 p-2}{2}\\
			=&(p-2)(2 p-1)( p-1)+(p-1)^2-(p-1)(p-1)(2 p-3)\\
			=&(p-1)\left((p-2)(2 p-1)+(p-1)-(p-1)(2 p-3)\right)\\
			=&(p-1)(p-2)>0.
		\end{align*}
		Then $e(G')>e(G)$, leading to a contradiction.
	\end{proof}
	
	\begin{claim}\label{case1.4}
		$\nu(G[Z])=s-p+1$.
	\end{claim}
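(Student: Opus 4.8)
The plan is to prove the two inequalities separately, the upper bound being essentially immediate and the lower bound coming from the extremality of $G$. For the upper bound, I would observe that $X$ is a vertex cut of size $p-1$: after deleting $X$ from $G$, every vertex of $Y$ becomes isolated (as $N_G(Y)\subseteq X$), while each component of $G[Z]$ is left intact, since by Claim~\ref{2base} every such component attaches to $X$ only through a single cut-vertex lying in $X$. Hence $\nu(G-X)=\nu(G[Z])$, and since any matching uses at most $|X|=p-1$ edges meeting $X$, we get $\nu(G)\le \nu(G[Z])+(p-1)$. As $G$ is $M_{s+1}$-free we have $\nu(G)\le s$, and combining these with the achievability of $\nu(G[Z])+(p-1)$ (match $Z$ internally and match all of $X$ into the large set $Y$) yields $\nu(G[Z])\le s-p+1$.

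For the reverse inequality I would argue by contradiction. Suppose $\nu(G[Z])\le s-p$; then $\nu(G)\le \nu(G[Z])+(p-1)\le s-1$. Since $|Y|=n-t_0-p+1\ge 2$ for $n$ large, pick two vertices $y_1,y_2\in Y$ and set $G'=G+y_1y_2$. Adding a single edge raises the matching number by at most one, so $\nu(G')\le s$ and $G'$ is still $M_{s+1}$-free. Moreover $e(G')=e(G)+1$ and $\mathcal{N}(K_r,G')\ge \mathcal{N}(K_r,G)$; the two vertices $y_1,y_2$ now have degree $p$ with all neighbours in $X\cup\{y_1,y_2\}$, so $G'$ again admits a partition of the type guaranteed by Lemma~\ref{stability}, with $Z$ enlarged by $\{y_1,y_2\}$. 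Consequently, either $\mathcal{N}(K_r,G')>\mathcal{N}(K_r,G)=\operatorname{ex}(n,K_r,\{C_{\geq 2p},M_{s+1}\})$, which is impossible, or $G'\in\mathcal{G}$ with $\Phi(G')>_{\text{lex}}\Phi(G)$, contradicting the choice of $G$ as lexicographically maximal.

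The one nonroutine point, and the step I expect to need the most care, is verifying that $G'$ remains $C_{\geq 2p}$-free. Because we are in Case 1 there are no exceptional edges, so every component of $G[Z]$ has at least three vertices and Claim~\ref{111} applies. Since each such component hangs off a single cut-vertex $x\in X$, any cycle meeting that component must stay entirely inside it together with $x$ (a cut-vertex cannot be traversed twice by a single cycle); as that subgraph is a clique on at most $2p-1$ vertices, such a cycle has length at most $2p-1$. Any cycle avoiding $Z$ lies in $G'[X\cup Y]=(K_{p-1}\vee I_{|Y|})+y_1y_2$, where $y_1y_2$ is the only edge inside the independent part $Y$; a cycle can therefore use at most one pair of consecutive $Y$-vertices, which forces its length to be at most $(p-1)+p=2p-1$. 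Hence no cycle of length at least $2p$ is created, and the contradiction is complete, so $\nu(G[Z])=s-p+1$.
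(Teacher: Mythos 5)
Your proof is correct and follows essentially the same route as the paper: the paper's own (two-sentence) proof is exactly the observation that if $\nu(G[Z])<s-p+1$, then adding an edge inside $Y$ yields a $\{C_{\geq 2p},M_{s+1}\}$-free graph with more edges, contradicting the choice of $G$ as lexicographically $\Phi$-maximal. Your write-up simply makes explicit what the paper calls ``easy to verify'' --- namely the identity $\nu(G)=\nu(G[Z])+(p-1)$ and the check (via the cut-vertex structure of the components of $G[Z]$ and the single edge inside $Y$) that no cycle of length at least $2p$ is created.
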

	\begin{proof}
		Recall that $G[X \cup Y]=K_{p-1} \vee I_{n-t_0-p+1}$ from Lemma \ref{stability}. If $\nu(G[Z])<s-p+1$, then adding an edge in $Y$ increases the number of edges by 1. It is easy to verify that the resultant graph is $\{C_{\geq2 p},M_{s+1}$\}-free with more edges than $G,$ leading to a contradiction.
	\end{proof}
	
	We claim that under the condition $\nu(G) \leq s$, the number of blocks which are $(2 p-2)$-cliques in $G[Z]$ should be as large as possible. Note that  $2 p-2$ vertices  in $K_{2 p-2}\subseteq G[Z]$ contribute $\binom{2 p-1}{r}$ $r$-cliques, while in $Y$ they   contribute $(2 p-2)\binom{p-1}{r-1}$ $r$-cliques.
	For any $r\geq 3,$ we have
	$$\binom{2p-1}{r}=\sum_{i=0}^r\binom{p}{i}\binom{p-1}{r-i} \geq\binom{p}{1}\binom{p-1}{r-1} +\binom{p}{r-1}\binom{p-1}{1} \geq (2p-1)\binom{p-1}{r-1}.$$
	For $r=2,$  $\binom{2p-1}{2} > (2p-2)\binom{p-1}{1}$ holds.
	Thus under the condition $\nu(G) \leq s$, the number of blocks which are $(2 p-2)$-cliques in $G[Z]$ should be as large as possible.
	Similarly, the number of $(2 p-3)$-cliques in $G[Z]$ should also be as large as possible.
	Therefore, it is desirable to have as many independent maximal  $(2 p-2)$-cliques or maximal $(2 p-3)$-cliques in $G[Z]$ as possible.
	
	If there are no even blocks in $G[Z]$. Let \( q=\big\lfloor\frac{s-p+1}{p-2}\big\rfloor \) and \( t=s-p+1-q(p-2) \). Then we conclude that $G[Z]=q K_{2 p-3}$ if $t=0$ or $G[Z]=q K_{2 p-3}\cup K_{2 t+1}$ if $t \neq 0$ which implies $G=G_3$.
	
	Suppose that there is at least one even block in $G[Z]$. Let \( c=\big\lfloor\frac{s-p+1}{p-1}\big\rfloor \) and \( d=s-p+1-c(p-1) \).
	If there are multiple odd blocks and at least one even block, then by Claims \ref{case1.1}--\ref{case1.4}, we have $G[Z]=x K_{2 p-2} \cup y K_{2 p-3}$ for some $1\leq x$ and $2 \leq y \leq p-2$ with $x(p-1)+y(p-2)=s-p+1$. Moreover, we have $c (p-1)+d=s-p+1=x (p-1)+y(p-2)$. Solving for $x$, we get $x=c-y+\frac{d+y}{p-1}$. Since $0 \leq d \leq p-2$ , $2\leq y \leq p-2$ and $x$ is an integer, we have $\frac{d+y}{p-1}=1$. Then $y=p-d-1$ and $x=c-p+d+2$. In this case, in view of $2 \leq y=p-d -1\leq p-2$, we have $1\leq d \leq p-3$, then $G= G_5.$
	If there is at most one odd block and at least one even block in $G[Z]$, we know that either $G[Z]$ is  $c K_{2 p-2}$ if $d=0$ or $c K_{2 p-2}\cup K_{2 d+1}$  if $d\neq 0$ with $d+c (p-1)=s-p+1.$ In this case, we have $G= G_4.$
	
	\noindent
	\textbf{Case 2.} There exists an exceptional edge $uv$ in $G[Z]$.
	\begin{claim}
		Every block in $G[Z]$ is a $(2 p-2)$-clique except for the exceptional edge $uv$.
	\end{claim}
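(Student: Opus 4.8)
The plan is to exploit the structure already in hand. By Claim~\ref{111}, every connected component of $G[Z]$ on at least three vertices is a clique $K_m$ which, together with its unique attachment vertex $v_1\in X$, forms a clique $H'=K_{m+1}$ with $p+1\le m+1\le 2p-1$, so $p\le m\le 2p-2$; and by Claim~\ref{exceptional} the only component on two vertices is the exceptional edge $uv$. Hence the blocks of $G[Z]$ are exactly these cliques together with $uv$, and it suffices to show that no clique block has order at most $2p-3$. I would treat small even blocks and odd blocks separately, in each case producing a graph $G'$ that is still $\{C_{\geq 2p},M_{s+1}\}$-free with $\mathcal N(K_r,G')\ge\mathcal N(K_r,G)$ and $e(G')>e(G)$; this forces $\Phi(G')>_{\text{lex}}\Phi(G)$ and contradicts the choice of $G$.

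For a block $K_m$ with $m$ even and $p\le m\le 2p-4$, I would apply the move of Claim~\ref{case1.1}: switch the neighbourhood of a vertex $w\in Y$ to $V(K_m)\cup\{v_1\}$, enlarging the block to $K_{m+1}$ (so $H'=K_{m+2}$ has circumference $m+2\le 2p-2<2p$). Because $m$ is even, $\lfloor (m+1)/2\rfloor=\lfloor m/2\rfloor$, so the matching number is unchanged and $G'$ is $M_{s+1}$-free; the enlarged clique gains $\binom{m+1}{r-1}-\binom{p-1}{r-1}\ge0$ copies of $K_r$ (using $m\ge p$) while $e(G')>e(G)$, giving the desired contradiction. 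This step never touches $uv$, so $G'$ still lies in $\mathcal G$.

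The odd case is the crux, and this is where the exceptional edge is spent. Given an odd block $K_{2t-1}$ with $p\le 2t-1\le 2p-3$ (so $t\le p-1$), I would absorb one end of $uv$ into it: join $u$ to every vertex of $K_{2t-1}$ (so the block becomes the even clique $K_{2t}$ and $H'=K_{2t+1}$ has circumference $2t+1\le 2p-1<2p$), delete the edges from $u$ to $X\setminus\{v_1\}$ and the edge $uv$, and retain $v$ as a vertex of $Y$ with neighbourhood $X$. Counting in the affected set $X\cup\{u,v\}\cup V(K_{2t-1})$ yields $\mathcal N(K_r,G')-\mathcal N(K_r,G)=\binom{2t}{r-1}-\binom{p}{r-1}\ge0$ and $e(G')-e(G)=2t-p\ge1>0$. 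The main obstacle is checking $\nu(G')\le\nu(G)$: by Claim~\ref{matching} the exceptional edge lies in every maximum matching, from which one computes $\nu(G)=\sum_i\lfloor m_i/2\rfloor+p$ over the clique blocks $K_{m_i}$; the loss of $uv$ lowers the count by one while the passage $K_{2t-1}\to K_{2t}$ raises the corresponding floor by one, so $\nu(G')=\nu(G)$ and $G'$ stays $M_{s+1}$-free. It remains to confirm $G'$ is $C_{\geq 2p}$-free, which holds because all cliques meet $X$ only at $v_1$ and $v$ now lies in the independent set $Y$, so every cycle stays inside a single block, of circumference at most $2p-1$. Both cases contradict the maximality of $\Phi(G)$, so every block of $G[Z]$ other than $uv$ must be a $(2p-2)$-clique.
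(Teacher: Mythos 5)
Your proposal is correct and is essentially the paper's own argument: your odd-block move (join one end of the exceptional edge to the undersized block, delete its edges to $X\setminus\{v_1\}$ and to its partner, and demote the partner to $Y$) is exactly the paper's single move of changing the neighborhood of $v$ to $V(K_t)\cup\{v_1\}$, up to swapping the names of $u$ and $v$. The only difference is cosmetic: your separate treatment of even blocks is unnecessary, because that same move also works when the block has even order (the matching number then drops by one instead of staying equal, which is still fine), though your explicit verification of $\nu(G')\le\nu(G)$ via the formula $\nu(G)=p+\sum_i\lfloor m_i/2\rfloor$ and Claim~\ref{matching} usefully fills in what the paper dismisses with ``one can readily verify.''
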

	\begin{proof}
		If there exists a block \( K_t \) with \( p \leq t \leq 2p-3 \), we modify \( G \) by changing the neighborhoods of the vertex \( v \) to \( V(K_t)\cup \{v_1\} \).
		One can readily verify that the resultant graph is \( \{C_{\geq 2p}, M_{s+1}\} \)-free. Moreover, these operations do not decrease the number of \( r \)-cliques and strictly increase the number of edges, leading to a contradiction.
	\end{proof}
	Let $s-p=c^{\prime}(p-1)+d^{\prime}, 0 \leq d^{\prime} \leq p-2$. As previously discussed, it is desirable to maximize the number of blocks in \( G[Z] \) that are \((2p-2)\)-cliques. Hence, $ G\left[Z\right]=c'K_{2 p-2}\cup K_2.$
	Then,
	$$G=K_1\vee\left(\left(K_{p-2} \vee \left(I_{n-p-1- c'(2 p-2) }\cup K_2\right)\right) \cup c' K_{2 p-2}\right),$$ where $c^{\prime}=\lfloor\frac{s-p}{p-1}\rfloor$. We show that $d^{\prime}<p-2$. Otherwise, $d^{\prime}=p-2$, we remove $u, v$ and $2 p-4$ vertices in $Y$, and add a $K_{2 p-2}$ into $c'K_{2 p-2}$ to form $(c'+1) K_{2 p-2}$ in $G[Z]$. Denote by $G^{\prime}$ the resultant graph. Obviously $G^{\prime}$ is $C_{\geq 2 p}$-free and $\nu(G')=$ $\left(c^{\prime}+1\right)(p-1)+p-1=s$. Moreover, for any $r\geq 3,$ we have
	\begin{align*}
		& \binom{2p-1}{r}-\left(2\binom{p-1}{r-1}+\binom{p-1}{r-2}+(2p-4)\binom{p-1}{r-1}\right)\\
		=&\sum_{i=0}^r\binom{p}{i}\binom{p-1}{r-i}-\binom{p-1}{r-2}-(2p-2)\binom{p-1}{r-1}\\
		\geq&\binom{p}{1}\binom{p-1}{r-1}+\binom{p}{r-1}\binom{p-1}{1}-\binom{p-1}{r-2}-(2p-2)\binom{p-1}{r-1}\\
		\geq&\binom{p-1}{r-1}+(p-2)\binom{p-1}{r-2}\geq 0.
	\end{align*}
	Therefore, $\mathcal{N}\left(K_r, G^{\prime}\right) \geq \mathcal{N}\left(K_r, G\right)$. Furthermore, it is easy to check that $e(G')>e(G)$, which leads a contradiction.
	Thus $d^{\prime}<p-2$. Note $s-p+1=c^{\prime}(p-1)+d^{\prime}+1$ and $ d^{\prime}+1<p-1$. It follows that  $c^{\prime}=c=\lfloor\frac{s-p+1}{p-1}\rfloor$ and $d=d^{\prime}+1$.
	Then
	$$
	G=K_1\vee\left(\left(K_{p-2} \vee\left(K_2 \cup I_{n-p-1-c(2p-2)}\right)\right) \cup c K_{2 p-2}\right)=G_6.
	$$
	In this case $d=d^{\prime}+1 \geq 1$.
	The result follows.

\end{document}